\documentclass[12pt,a4paper]{article}
\usepackage[english]{babel}
\usepackage{url}
\hoffset-1cm \textwidth=16cm 
\usepackage{amssymb,amsmath,verbatim,graphicx}

\setlength{\parskip} {\smallskipamount}

\parindent0pt
\parskip\medskipamount
\newtheorem{theorem}{Theorem}[section]

\newtheorem{prop}[theorem]{Proposition}
\newtheorem{lemma}[theorem]{Lemma}
\newtheorem{cor}[theorem]{Corollary}
\newtheorem{observe}[theorem]{Observation}
\newtheorem{main}[theorem]{Main Result}
\newtheorem{rem}[theorem]{Remark}

\newenvironment{proof}{\par\noindent\textbf{Proof}\hspace{1em}}{\qed}
\newenvironment{proof*}{\par\noindent\textbf{Proof}\hspace{1em}}{}
\def\<{\langle}
\def\>{\rangle}

\newcommand{\cF}{\mathcal{F}}

\newcommand{\PG}{\mathsf{PG}}

\renewcommand{\L}{\mathbb{L}}
\newcommand{\F}{\mathbb{F}}

\newcommand{\cA}{\mathcal{A}}

\newcommand{\Res}{\mathrm{Res}}

\newcommand{\N}{\mathbb{N}}

\def\qed{{\hfill\hphantom{.}\nobreak\hfill$\Box$}}

\begin{document}

\author{Anneleen De Schepper$^1$ \and Hendrik Van Maldeghem$^2$}
\title{Graphs, defined by Weyl distance or incidence, that determine a vector space}
\date{\footnotesize $^{1,2}$ Department of Mathematics,\\
Ghent University,\\
Krijgslaan 281-S22,
B-9000 Ghent,\\
BELGIUM\\
$^1$ \texttt{Anneleen.DeSchepper@UGent.be}\\ $^2$ \texttt{Hendrik.VanMaldeghem@UGent.be}\\ $^2$ Corresponding author, tel. +32 9 264 49 11
}
\maketitle

\begin{abstract}
We study to which extent the family of pairs of subspaces of a vector space related to each other via intersection properties determines the vector space. In another language, we study to which extent the family of vertices of the building of a projective space related to each other via several natural respective conditions involving the Weyl distance and incidence determines the building. These results can be seen as generalizations of and variations on the Fundamental Theorem of Projective Geometry.    
\end{abstract}

{\footnotesize
\emph{Keywords:} adjacency preserving map, fundamental theorem, Weyl distance, Grassmannian\\
\emph{AMS classification:} 15A03, 15A04, 51A05
}

\section{Introduction}
Let $\PG(n,\L)$ be an $n$-dimensional projective space over the skew field $\L$, i.e., the geometry of all nontrivial subspaces of an $(n+1)$-dimensional vector space $V_{n+1}(\L)$ over $\L$. 
\emph{The Fundamental Theorem of Projective Geometry} (see e.g.~\cite{BC}) states that, in graph-theoretical terms, every automorphism of the incidence graph of the point-line geometry associated to $\PG(n,\L)$, $n\geq 2$, is induced by a semi-linear permutation of the underlying vector space, or a duality (if $n=2$). In other words, every permutation of the 1-spaces of $V_{n+1}(\L)$ inducing a permutation of the $2$-spaces of $V_{n+1}(\L)$ is induced by a vector space (anti)automorphism.  In fact, all information of $\PG(n,\L)$ is already contained in the graph with the lines of $\PG(n,\L)$, or equivalently, the $2$-spaces of $V_{n+1}(\L)$, as vertices, where two vertices are adjacent if the corresponding subspaces intersect nontrivially. This is the collinearity graph of the line Grassmannian of the projective space. More generally, every automorphism of the collinearity graph of \emph{any} Grassmannian of $\PG(n,\L)$ is induced by a semi-linear permutation of the underlying vector space $V_{n+1}(\L)$, or a duality thereof, by a fundamental result of Chow \cite{Cho:49}. 

In the present paper, we generalize this by considering the subspaces of other arbitrary dimensions. Since we will only use subspaces of $V_{n+1}(\L)$, and not the vectors themselves, we prefer to work in the projective setting and hence use projective dimensions of the subspaces (1-spaces of $V_{n+1}(\L)$ are points or 0-spaces of $\PG(n,\L)$, $2$-spaces of $V_{n+1}(\L)$ are lines or $1$-spaces of $\PG(n,\L)$, $3$-spaces of $V_{n+1}(\L)$ are planes or $2$-spaces of $\PG(n,\L)$, \ldots, $k$-spaces of $V_{n+1}(\L)$ are $(k-1)$-spaces of $\PG(n,\L)$, $0\leq k\leq n+1$; note that the $0$-space of $V_{n+1}(\L)$ is a $-1$-space of $\PG(n,\L)$). So we consider the bipartite graph $\Gamma^n_{i,j;k}(\L)$ of $i$- and $j$-spaces of $\PG(n,\L)$, where an $i$-space is adjacent to a $j$-space if their intersection is a $k$-space, $-1\leq k\leq i\leq j\leq n$.  This is a metric generalization of the set-up in the previous paragraph, because ``intersecting in a $k$-space'' is a well-defined Weyl distance (a double coset in the Weyl group, which is the symmetric group on $n+1$ letters) between $i$-spaces and $j$-spaces, when $\PG(n,\L)$ be viewed as a building, and its subspaces as vertices. From the incidence geometric point of view, however, the general set-up is the graph $\Gamma^n_{i,j;\geq k}(\L)$ of $i$-spaces and $j$-spaces of $\PG(n,\L)$, where an $i$-space is adjacent to a $j$-space if they are both incident or equal to a common $k$-space. This means that their intersection has dimension \emph{at least} $k$, whence the notation. 

We show that, if $\Gamma^n_{i,j;k}(\L)$ or $\Gamma^n_{i,j;\geq k}(\L)$ are not trivial (meaning not complete bipartite, nonempty, not a matching and not the complement of a matching), then they completely determine the structure of $\PG(n,\L)$. In particular, every automorphism is induced by a semi-linear permutation of the underlying vector space, in some cases possibly a duality. Moreover, we will see that, apart from obvious isomorphisms, all these graphs are pairwise non-isomorphic, i.e., the values $n,i,j,k$ and the skew field $\L$ are essentially determined by the respective graphs (``essentially'' here means ``up to certain dualities'' which will be explained below). 

The metric point of view fits into the framework of the classical Theorem of Beckman and Quarles \cite{Bec-Qua:53} which states that the preservation of one single distance guarantees an isometry of the Euclidean plane, or more generally, the $n$-dimensional Euclidean space. Here we show that the preservation of one single Weyl distance between vertices on a building of type $\mathsf{A}_n$ guarantees an automorphism of the building. A similar result exists for  distances between chambers in any building \cite{Abr-Mal:04}. For vertices, only partial results exist, mainly treating the special case of largest distance (see below), or buildings in low rank (see \cite{Gov-Mal:02}). 

The main consequence concerns the graphs $\Gamma^n_{j;k}(\L)$ and $\Gamma^n_{j;\geq k}(\L)$ of $j$-spaces of $\PG(n,\L)$ where two vertices are adjacent if they intersect in a $k$-space and if their intersection contains a $k$-space, respectively. Indeed, since the (possibly extended) bipartite doubles of these graphs are $\Gamma^n_{j,j;k}(\L)$ and $\Gamma^n_{j,j;\geq k}(\L)$, respectively, every graph automorphism of $\Gamma^n_{j;k}(\L)$ or of $\Gamma^n_{j;\geq k}(\L)$ is induced by a semi-linear permutation or a duality of the underlying vector space, as soon as these graphs are not trivial (meaning not empty and not complete). For the graphs $\Gamma^n_{j;\geq k}(\L)$, this has been shown by Lim \cite{Lim:10} with a beautiful geometric argument. In fact, Lim considers adjacency preserving (in both directions) \emph{surjections} of the graphs. This slightly weaker hypothesis also suffices in our setting, but we find it more convenient to work with bijections and afterwards deduce this slightly more general result, see Remark~\ref{surjections}. We will not need to use Lim's results and thus provide an alternative approach to Lim's theorem. 

The proof in the case of  $\Gamma^n_{i,j;\geq k}(\L)$ uses the idea of a round-up triple introduced in \cite{Kas-Mal:13}, where opposition is handled (and ``opposition'' is just the maximal Weyl distance, using the ``longest word''). In fact the idea of a round-up triple is a more conceptual way to formulate Lim's proof, and it allows to treat more general situations. But basically, Lim's proof and ours are very alike, when applied to $\Gamma^n_{j;\geq k}(\L)$. Our results can also be seen as the completion of Lim's results in the most general case.

But the method of round-up triples does not work anymore for the graphs  $\Gamma^n_{i,j;k}(\L)$. This is rather surprising since a similar idea for opposite chambers, see \cite{Abr-Mal:00} carried over to single Weyl distance between chambers, see \cite{Abr-Mal:04}. For  $\Gamma^n_{i,j;k}(\L)$, we have to use ``round-up quadruples'', which considerably complicates things, and excludes $|\L|=2$. Here, Lim's approach does not work anymore.

This brings us to the special case of finite $\L$, where our results can be proved using the classification of maximal subgroups of the symmetric and alternating groups in \cite{Lie-Pre-Sax:87}. But that proof does not give much insight into the problem, of course. 
In the finite case, we will denote the graphs $\Gamma^n_{i,j;k}(\L)$ and $\Gamma^n_{i,j;\geq k}(\L)$ by $\Gamma^n_{i,j;k}(|\L|)$ and $\Gamma^n_{i,j;\geq k}(|\L|)$, respectively, since a finite field is determined by its order. 



Apart from the results of Lim and Chow mentioned above, another special case of our results concerns the case  $\Gamma^n_{i,j;-1}(\L)$ (or the bipartite complement $\Gamma^n_{i,j;\geq 0}(\L)$). This has been treated by Blunck and Havlicek \cite{Blu-Hav:05}. We are not aware of other special cases in the literature. However, for polar spaces, a lot of similar problems have been solved, but thus far not in such a full generality as we do for projective spaces in the present paper (see the references in \cite{Lim:10}). This is our main motivation: settle the problem for projective spaces and polar spaces in the most general way. In the present paper, we deal with the projective spaces, and the analogue for polar spaces is work in progress. 

This research is part of a larger programme to determine all situations for spherical buildings \cite{Tits} where the family of pairs of vertices at certain fixed Weyl distance, or connected by an incidence condition, uniquely defines the building in question. Note that this is not always true (so the projective spaces are, in this respect, a nice class of spherical buildings), see for instance \cite{Kas-Mal:13}, where the maximal distance between maximal singular subspaces in certain parabolic quadrics is a counter example, or \cite{Gov-Mal:02}, where again the maximal distance between points of a generalized hexagon is a counter example. In the present paper we will find other such examples, be it in the thin case (but one of the examples is strongly related to the smallest thick generalized quadrangle).

Indeed, in the last section of the present paper, we also prove the analogue of our main results for the \emph{thin} case, i.e., for vector spaces or projective spaces over the field of order 1, hence just sets. For finite sets, everything will follow, with some additional work, from a group-theoretic result of Liebeck, Praeger and Saxl \cite{Lie-Pre-Sax:87}, but we also consider infinite sets. As mentioned in the previous paragraph, it is interesting to note that for finite sets, there are counter examples, i.e., there are situations where not all graph automorphisms are induced by a permutation of the starting set. For more details, see Section~\ref{finitethin}

\section{Statements of the results}

We now provide the exact statements alluded to in the introduction. Let $V_{n+1}(\L)$ be an $(n+1)$-dimensional right vector space over the skew field $\L$, and let $\PG(n,\L)$ be the associated projective space, i.e., the points of $\PG(n,\L)$ are the 1-spaces of $V_{n+1}(\L)$, and a $k$-space of $\PG(n,\L)$, $0\leq k\leq n-1$, consists of the $1$-spaces contained in subspace of dimension $k+1$ of $V_n(\L)$. The \emph{empty subspace} of $\PG(n,\L)$ corresponds to the trivial subspace of $V_n(\L)$ and has project dimension $-1$. 

We define the graphs $\Gamma^n_{i,j;k}(\L)=\Gamma^n_{j,i;k}(\L)$ and $\Gamma^n_{i,j;\geq k}(\L)=\Gamma^n_{j,i;\geq k}(\L)$ as above. The \emph{bipartite complement} of a bipartite graph $\Gamma$ is the graph obtained from $\Gamma$ by interchanging edges and non-edges between the biparts, while keeping no edges within the biparts. 

\begin{main}\label{main1}
Let $\L$ and $\L'$ be two skew fields, and let $-1\leq k\leq i\leq j\leq n-1$, $-1\leq k'\leq i'\leq j'\leq n'-1$ be integers, with $-1\notin\{i,i'\}$ and $n\geq 2$. 
\begin{itemize}
\item[$(i)$] If $i=j=k\geq 0$, then $\Gamma^n_{i,j;k}(\L)$ is a matching.
\item[$(ii)$] If $i=j=n-1=k+1$ or $i=j=0=k+1$, then $\Gamma^n_{i,j;k}(\L)$ is the complement of a matching.
\item[$(iii)$]  If $n+k<i+j$, then $\Gamma^n_{i,j;k}(\L)$ is an empty graph (a graph with vertices but no edges).
\item[$(iv)$] If $\L\cong\L'$, $n=n'$, $i'=n-1-j$, $j'=n-1-i$ and $k'=n-1+k-i-j$, then $\Gamma^n_{i,j;k}(\L)\cong \Gamma^{n'}_{i',j';k'}(\L')$.
\item[$(v)$] If $i+j\leq n-1$, $(i,j)\neq(0,0)$, $k<j$  and $i'+j'\leq n'-1$, then $\Gamma^n_{i,j;k}(\L)\cong\Gamma^{n'}_{i',j';k'}(\L')$ if and only if $\L\cong \L'$ and $(i,j,k,n)=(i',j',k',n')$. In this case every graph isomorphism is induced by a semi-linear bijection from $V_n(\L)$ to $V_{n'}(\L')$, or possibly to $V_{n'}^*(\L')$ (the dual of $V_{n'}(\L')$) if $i+j=n-1$ and $\L'\cong (\L')^*$ (the latter is the opposite skew field). 
\end{itemize} 
\end{main}

Note that the restrictions in $(v)$ are justified by $(iv)$, so that we really cover all possible cases. The same thing holds for the next result, where the restrictions in $(iv)$ are justified by $(iii)$.

\begin{main}\label{main2}
Let $\L$ and $\L'$ be two skew fields, and let $-1\leq k\leq i\leq j\leq n-1$, $-1\leq k'\leq i'\leq j'\leq n'-1$ be integers, with $-1\notin\{i,i'\}$ and $n\geq 2$. 
\begin{itemize}
\item[$(i)$]  If $n+k\leq i+j$ or $k=-1$, then $\Gamma^n_{i,j;\geq k}(\L)$ is a complete bipartite graph.
\item[$(ii)$] If $k=i+j+1-n$, then $\Gamma^n_{i,j;\geq k}(\L)$ is the bipartite  complement of $\Gamma^n_{i,j;k-1}(\L)$; if $i=k$, then $\Gamma^n_{i,j;\geq k}(\L)\cong\Gamma^n_{i,j;k}(\L)$.
\item[$(iii)$] If $\L\cong\L'$, $n=n'$, $i'=n-1-j$, $j'=n-1-i$ and $k'=n-1+k-i-j$, then $\Gamma^n_{i,j;\geq k}(\L)\cong \Gamma^{n'}_{i',j';\geq k'}(\L')$.
\item[$(iv)$] If $i+j\leq n-1$, $-1\neq k<j$ and $i'+j'\leq n'-1$, then $\Gamma^n_{i,j;\geq k}(\L)\cong\Gamma^{n'}_{i',j';\geq k'}(\L')$ if and only if $\L\cong \L'$ and $(i,j,k,n)=(i',j',k',n')$. In this case every graph isomorphism is induced by a semi-linear bijection from $V_n(\L)$ to $V_{n'}(\L')$, or possibly to $V_{n'}^*(\L')$ (the dual of $V_{n'}(\L')$) if $i+j=n-1$ and $\L'\cong (\L')^*$ (the latter is the opposite skew field). 
\end{itemize} 
\end{main}

We will also show that all graphs in $(v)$ of Main Result~\ref{main1} are distinct from those in $(iv)$ of Main Result~\ref{main2}, except for $k=\min\{i,j\}$ in both. 

A special case worth mentioning is the graph $\Gamma^n_{i,j;i}(\L)$, with $0\leq i<j\leq n-1$. This is the bipartite graph consisting of $i$-subspaces and $j$-subspaces, where adjacency is just defined by containment. It leads to the most straightforward generalization of the Fundamental Theorem of Projective Geometry, and, as we will see,  we will need to prove it separately in advance. 

Another special case occurs when $i=j$; in this case one can define the non-bipartite graphs $\Gamma^n_{j;k}(\L)$ and $\Gamma^n_{j;\geq k}(\L)$ (see above), see Corollaries~\ref{th1} and~\ref{th2}. 

\textbf{Notation.} The incidence graph of $\PG(n,\L)$ is an $n$-partite graph denoted by $\Gamma^n_{[0,n-1]}(\L)$. If we restrict this graph to the subspaces of dimensions $i,i+1,\ldots, j-1,j$, then we denote the resulting $(j-i+1)$-partite graph by $\Gamma^n_{[i,j]}(\L)$. In such a graph, the $k$-neighbor of a vertex $v$, with $i\leq k\leq j$ and with $\dim(v)\neq k$, are the subspaces of dimension $k$ incident with $v$. The $k$-neighborhood of $v$ is the set of $k$-neighbors of $v$. 

For $0\leq j\leq n-1$, the $j$-Grassmann graph is the graph with vertices the $j$-spaces of $\PG(n,\L)$, where two $j$-spaces are adjacent if they intersect in a $(j-1)$-space. This is the collinearity graph of the so-called $j$-Grassmannian geometry, which is defined as follows. The points are the $j$-spaces and the lines are the sets of $j$-spaces containing a fixed $(j-1)$-space $J_-$ and being contained in a fixed $(j+1)$-space $J_+$, with $J_-\subseteq J_+$.  If $j\notin\{0,n-1\}$, then the $j$-Grassmann graph uniquely determines the $j$-Grassmannian geometry. In any case, the $j$-Grassmannian geometry completely determines $\PG(n,\L)$ in the sense that the automorphism groups of both structures coincide (possibly up to the dualities). 

For a set $S$ of subspaces (possibly just points), we define $\<S\>$ to be the subspace generated by all members of $S$. If $S$ consists of two distinct points $p_1,p_2$, then we also denote the unique line passing through these points by $p_1p_2$.  Finally, for a $k$-subspace $K$, we denote by $\Res(K)$ the projective space of dimension $n-k-1$ obtained from the underlying vector space by factoring out $K$, and we call it the \emph{residue of $K$}. Hence the $i$-spaces of $\Res(K)$, $-1\leq i\leq n-k-1$, are the quotients $W/K$, where $W$ is an $(i+k+1)$-space of $\PG(n,\L)$ containing $K$. 

\section{Proofs}
\subsection{Generalities and the case $k=0$}
The assertions $(i)$ to $(iv)$ of Main Result~\ref{main1} and $(i)$ to $(iii)$ of Main Result~\ref{main2} are easy to verify. Hence we concentrate on showing $(v)$ of Main Result~\ref{main1} and $(iv)$ of Main Result~\ref{main2}. 

So let there be given a graph $\Gamma\cong\Gamma^n_{i,j;k}(\L)$ (with $n,i,j,k,\L$ as in $(v)$ of Main Result~\ref{main1}) or $\Gamma\cong\Gamma^n_{i,j;\geq k}(\L)$ (with $n,i,j,k,\L$ as in $(iv)$ of Main Result~\ref{main2}), except that we do not assume that $i\leq j$. We provide an algorithmic proof, determining the parameters as we go along. An exception is the family of graphs $\Gamma\cong\Gamma^n_{i,j;k}(2)$, $k<\min\{i,j\}$, which we must handle separately. Hence we will assume that $\Gamma$ is not isomorphic to such a graph. 

In the course of the proof, we will have to pick at certain moments a subspace of certain dimension satisfying different incidence conditions. We first prove a lemma that will imply that we can do so in the most restrictive case (which will take care of all other cases, too, that we will encounter).

\begin{lemma}\label{choose}
Let $a\geq 2$ and let $0\leq b< a$ be natural numbers. Let $B$ be $b$-space in $\PG(a,\L)$, let $B_1,B_2$ be two subspaces of $\PG(a,\L)$ of dimension at most $b-1$, and let $B_3$ be a subspace of $\PG(a,\L)$ of dimension at most $b-2$ (if $b=0$, then $B_3$ is the empty space). Then there exists an $(a-b-1)$-space $C$ disjoint from $B\cup B_1\cup B_2\cup B_3$. 
\end{lemma}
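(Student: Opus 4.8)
The plan is to pass to the underlying vector space $V_{a+1}(\L)$ and build the desired $C$ one vector at a time. Write $\widehat{B},\widehat{B_1},\widehat{B_2},\widehat{B_3}$ for the subspaces of $V_{a+1}(\L)$ corresponding to $B,B_1,B_2,B_3$, so that $\dim\widehat{B}=b+1$, $\dim\widehat{B_1},\dim\widehat{B_2}\leq b$ and $\dim\widehat{B_3}\leq b-1$. A subspace $C$ of projective dimension $a-b-1$ disjoint from all four is the same as a subspace $\widehat{C}$ of vector dimension $a-b$ with $\widehat{C}\cap\widehat{B}=0$ (so that $\widehat{C}$ is a complement of $\widehat{B}$) and $\widehat{C}\cap\widehat{B_s}=0$ for $s=1,2,3$. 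I would construct a basis $c_1,\dots,c_{a-b}$ of such a $\widehat{C}$ inductively, maintaining throughout the invariant that the partial span $C_t:=\langle c_1,\dots,c_t\rangle$ meets each of $\widehat{B},\widehat{B_1},\widehat{B_2},\widehat{B_3}$ only in $0$.

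The core of the argument is the extension step. A short computation shows that, given the invariant for $C_{t-1}$, adjoining a vector $c_t$ keeps $C_t\cap\widehat{B_s}=0$ precisely when $c_t\notin\widehat{B_s}+C_{t-1}$, and likewise $C_t\cap\widehat{B}=0$ precisely when $c_t\notin\widehat{B}+C_{t-1}$ (this last condition also forces $c_t\notin C_{t-1}$, so that the $c_t$ stay independent). Hence the step succeeds as soon as we can choose
\[
c_t\in V_{a+1}(\L)\setminus\Big((\widehat{B}+C_{t-1})\cup(\widehat{B_1}+C_{t-1})\cup(\widehat{B_2}+C_{t-1})\cup(\widehat{B_3}+C_{t-1})\Big),
\]
a union of at most four proper subspaces, each of which contains $C_{t-1}$. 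Over an infinite skew field this is automatic, since a vector space over an infinite skew field is never a finite union of proper subspaces. For a finite $\L$ a counting estimate is needed, and this is the only delicate point.

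To run the count I would factor out $C_{t-1}$ and work in the quotient space $V_{a+1}(\L)/C_{t-1}$, that is, in the residue of the corresponding projective subspace; its projective dimension is $a-(t-1)$. There the four forbidden subspaces become $\overline{B}$ (of projective dimension $b$) and $\overline{B_1},\overline{B_2},\overline{B_3}$ (of projective dimensions at most $b-1,b-1,b-2$), and I must exhibit a point lying off their union. I would bound the number of points off the largest subspace $\overline{B}$ from below and the number of points of the three smaller subspaces from above. The extreme case is the final step $t=a-b$, where $\overline{B}$ is a hyperplane: then there are exactly $|\L|^{b+1}$ points off $\overline{B}$, while the three smaller subspaces contain at most $2|\L|^{b-1}+|\L|^{b-2}$ points outside $\overline{B}$, and the inequality
\[
|\L|^{\,b+1}\;>\;2|\L|^{\,b-1}+|\L|^{\,b-2}\qquad\Longleftrightarrow\qquad |\L|^{3}>2|\L|+1
\]
holds for every $|\L|\geq 2$; at all earlier steps $\overline{B}$ has codimension at least $2$, the ambient residue is larger, and a similar but easier estimate applies. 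This yields a valid $c_t$ at each step and hence the required $C$.

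The step I expect to be the main obstacle is exactly this last counting estimate over small fields. For $|\L|=2$ the inequality $|\L|^3>2|\L|+1$ is tight ($8>5$), so the argument only goes through because the dimensions of $B_1,B_2,B_3$ are strictly smaller than that of $B$ and are tracked precisely: a cruder bound — for instance ignoring that the bad subspaces all contain $C_{t-1}$, or replacing the affine count off the hyperplane $\overline{B}$ by the full size of the $\overline{B_s}$ — already fails at $|\L|=2$. Everything else, namely the reduction to a vector-space statement, the extension criterion $c_t\notin\widehat{B_s}+C_{t-1}$, and the infinite-field case, is routine.
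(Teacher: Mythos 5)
Your proof is correct and is essentially the paper's own argument in vector-space clothing: both proceed by repeatedly choosing a point (vector) outside the union of the four subspaces and then passing to the residue (quotient), with the infinite case settled by the fact that a space over an infinite skew field is never a finite union of proper subspaces, and the finite case by a count whose worst case is exactly your inequality $q^{3}>2q+1$, which holds at $q=2$ precisely because of the prescribed dimension drops for $B_1,B_2,B_3$. The only cosmetic difference is that the paper runs the induction top-down on the projective dimension $a$ (inflating all subspaces to maximal dimension and counting once in $\PG(a,q)$), whereas you build $C$ bottom-up one basis vector at a time and perform the count in each successive quotient.
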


\begin{proof}
We assume $a\geq 3$, as the case $a=2$ is easy (there is always a point not incident with a given line and distinct from two other given points, and there is always a line not incident with a given point). By possibly extending the spaces $B_1,B_2,B_3$, we may assume that they have precisely dimension $b-1,b-1,b-2$, respectively. 

We claim that $B\cup B_1\cup B_2\cup B_3$ is nonempty. Indeed, if $|\L|$ is infinite, then this follows from the fact that $\PG(a,\L)$ is not the union of a finite number of (proper) subspaces. Now let $|\L|=q$ be finite. We may extend the subspaces $B,B_1,B_2,B_3$ in such a way that they have maximal dimension, i.e., we may assume that $b=a-1$. Then, taking into account that $B\cap B_1$ is at least $(a-2)$-dimensional, and similarly for $B\cap B_2$ and $B\cap B_3$, we count at most
$$\underbrace{\frac{q^a-1}{q-1}+\frac{q^{a-1}-1}{q-1}-\frac{q^{a-2}-1}{q-1}}_{B\cup B_1}+\underbrace{\frac{q^{a-1}-1}{q-1}-\frac{q^{a-2}-1}{q-1}}_{B_2\setminus B}+\underbrace{\frac{q^{a-2}-1}{q-1}-\frac{q^{a-3}-1}{q-1}}_{B_3\setminus B}$$ points in $B\cup B_1\cup B_2\cup B_3$. The claim follows if we show that this number is strictly less than $\frac{q^{a+1}-1}{q-1}$, which follows immediately from the obvious inequality $$q^{a+1}>q^a+2q^{a-1}-q^{a-2}-a^{q-3},$$ for all $a\geq 3$ and all $q\geq 2$. 

Now we continue by induction on $a\geq b+1$. The case $a=b+1$ follows from the claim above (then $C$ is just a point outside $B\cup B_1\cup B_2\cup B_3$).   Let $a>b+1$. Let $x$ be a point outside $B\cup B_1\cup B_2\cup B_3$. Then by the induction hypothesis we obtain an $(a-b-2)$-space $C'$ in $\Res(x)$ disjoint from $\<B,x\>/x,\<B_1,x\>/x\<B_2,x\>/x,\<B_3,x\>/x$. The corresponding  subspace $C$ in $\PG(a,\L)$ intersects each of $B,B_1,B_2,B_3$ exactly in the point $x$. Hence $C$ is disjoint from $B\cup B_1\cup B_2\cup B_3$ and the lemma is proved. 
\end{proof}


Now we start by isolating the case $k=\min\{i,j\}$, $(i,j)\notin\{(0,n-1),(n-1,0)\}$, $|j-i|>1$. For a vertex $v$ of $\Gamma$, we denote by $\Gamma(v)$ the set of neighbors of $v$. Also, $V(\Gamma)$ is the set of vertices of $\Gamma$, and $V^v(\Gamma)$ is the set of vertices in the bipart of $v$. 

\begin{prop}\label{propmin}
For the graph $\Gamma$ the parameter $k$ equals $\min\{i,j\}$ with $(i,j)\notin\{(0,n-1),(n-1,0)\}$ and $|j-i|>1$, if and only if $\Gamma$ satisfies the  following property \emph{(min)}
\begin{itemize}
\item[\emph{(min)}] For some vertex $v$, the family $\cF(v)=\{\Gamma(v)\cap\Gamma(w):w \in V^v(\Gamma)\}$ forms a poset under inclusion with the property that, if two elements have a greatest common lower bound, then it is obtained by intersecting the two elements.   Also, every two maximal elements have a greatest common lower bound and every element is contained in a maximal element.
\end{itemize}
\end{prop}

\begin{proof} 
If $k=\min\{i,j\}$, say $i=k$, and $v$ is a $j$-space, then $\cF(v)$ is the poset of all subspaces of dimension at least $\max\{i,2j-n\}$, viewed as sets of the $i$-spaces they contain, of a projective space of dimension $j$. It is clear that this poset satisfies (min) as soon as $i< j-1$ and $2j-n< j-1$. The first condition is equivalent with $|j-i|>1$ and the second with $j<n-1$, so with $(i,j)\neq(0,n-1)$ in view of $i+j\leq n-1$.

Now suppose $0\leq k<\min\{i,j\}$ (we cease to assume $i\leq j$ as we did in the previous paragraph). Let $J_1$ and $J_2$ be two $j$-spaces intersecting in a $(j-1)$-space $J'$. Let $J_3$ be such that $\Gamma(J_1)\cap\Gamma(J_3)$ is maximal in $\cF(J_1)$ and contains $\Gamma(J_1)\cap\Gamma(J_2)$. We claim first that $J_1\cap J_2\subseteq J_3$. Indeed, suppose not. Then we can select a $k$-subspace $K$ contained in $J_1\cap J_2$ but not in $J_3$. Applying Lemma~\ref{choose} in $\Res(K)$, we see that there exists an $i$-space $I$ intersecting both $J_1$ and $J_2$ in $K$, and intersecting $J_3$ in $J_1\cap J_2\cap K$, which has dimension strictly less than $k$. Hence $I\in\Gamma(J_1)\cap\Gamma(J_2)\setminus\Gamma(J_3)$, a contradiction.  Now there are two possibilities. 

Assume that $J_3\notin\<J_1,J_2\>$. Then there is a $(k+1)$-space $K$ in $\<J_1,J_2\>$ intersecting $J_1\cap J_2$ in a $(k-1)$-space. We can now pick an $i$-space $I$ through $K$ not intersecting $J_3\setminus K$. Hence $I$ is adjacent to both $J_1,J_2$, but not to $J_3$, contradicting the maximality of $\Gamma(J_1)\cap\Gamma(J_3)$ in $\cF(J_1)$. 

Hence we may assume $J_3\in\<J_1,J_2\>$. If $\Gamma(J_1)\cap\Gamma(J_2)=\Gamma(J_1)\cap\Gamma(J_3)$, then $\Gamma(J_1)\cap\Gamma(J_2)$ was already maximal in the first place. If $\Gamma(J_1)\cap\Gamma(J_2)\neq\Gamma(J_1)\cap\Gamma(J_3)$, then, since there exists a collineation fixing $J_1$ and interchanging $J_2$ and $J_3$, there is a vertex in $\Gamma(J_1)\cap\Gamma(J_2)$ that does not belong to $\Gamma(J_1)\cap\Gamma(J_3)$, again a contradiction. 

We conclude that $\Gamma(J_1)\cap\Gamma(J_2)$ is maximal itself. Now consider any $j$-space $J_4$ not contained in $\<J_1,J_2\>$ and such that $J_1\cap J_2\subseteq J_4$. As above, we know that $\Gamma(J_1)\cap\Gamma(J_2)\neq\Gamma(J_1)\cap\Gamma(J_4)$. Hence, by (min), there exists a $j$-space $J_5$ with $\Gamma(J_1)\cap\Gamma(J_5)=\Gamma(J_1)\cap\Gamma(J_2)\cap\Gamma(J_4)$. But, also as above, if $J_5$ does not contain $J_1\cap J_2$, then we can select a $k$-space $K$ in $J_1\cap J_2$ not contained in $J_5$, and an $i$-space $I$ through $K$ such that $I\cap J_5=I\cap K$, $I\cap J_i=K$, for $i\in\{1,2,4\}$. Then $I$ belongs to $\Gamma(J_1)\cap\Gamma(J_2)\cap\Gamma(J_4)\setminus\Gamma(J_5)$, a contradiction. Hence $J_1\cap J_2\subseteq J_5$. But then $\Gamma(J_1)\cap\Gamma(J_5)$ is maximal, the final contradiction. Hence (min) is not satisfied.  

Now let $k=-1$. Let $J_1,J_2,J_3$ be three different $j$-spaces. It is easy to see that through any point of $J_3\setminus (J_1\cup J_2)$ one can find an $i$-space disjoint from both $J_1$ and $J_2$. Hence every element of $\cF(J_1)$ is maximal and so (min) cannot be satisfied.   
\end{proof}

\begin{prop}\label{propk=i}
The parameters $i,j,n,\L$ are uniquely determined by the graph $\Gamma^n_{i,j;i}(\L)$, for $i\leq j\leq n-i-1$. Moreover, every graph automorphism is induced by a semi-linear permutation of the underlying vector space.
\end{prop}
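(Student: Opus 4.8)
The plan is to reconstruct, from the abstract graph $\Gamma\cong\Gamma^n_{i,j;i}(\L)$, both the $i$-Grassmannian and the $j$-Grassmannian geometry of $\PG(n,\L)$, and then to invoke the theorem of Chow~\cite{Cho:49} together with the Fundamental Theorem~\cite{BC}. I assume $i<j$ throughout, since $i=j$ makes $\Gamma$ a matching (Main Result~\ref{main1}$(i)$). Recall that the two biparts of $\Gamma$ are the $i$-spaces and the $j$-spaces, an $i$-space being adjacent to a $j$-space precisely when it is contained in it.

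First I would recover the two Grassmann adjacencies intrinsically. Fix a vertex $v$ and, as in Proposition~\ref{propmin}, consider $\cF(v)=\{\Gamma(v)\cap\Gamma(w):w\in V^v(\Gamma)\setminus\{v\}\}$ ordered by inclusion. If $v=J$ is a $j$-space, then $\Gamma(J)\cap\Gamma(J')$ is the set of $i$-spaces inside $J\cap J'$; since a subspace of dimension at least $i$ is determined by the $i$-spaces it contains, inclusion of these sets matches inclusion of the intersections $J\cap J'$, and the maximal elements are exactly those with $\dim(J\cap J')=j-1$. Dually, if $v=I$ is an $i$-space, then $\Gamma(I)\cap\Gamma(I')$ is the set of $j$-spaces through $\langle I,I'\rangle$, so the ordering is reversed against the span and the maximal elements are exactly those with $\dim\langle I,I'\rangle=i+1$. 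Lemma~\ref{choose}, applied in the appropriate residue, guarantees that every relevant intersection configuration is realized by some vertex, so these characterizations are clean. Hence the rule ``$J\sim J'$ if and only if $\Gamma(J)\cap\Gamma(J')$ is maximal in $\cF(J)$'' recovers, purely from $\Gamma$, the $j$-Grassmann graph on the $j$-bipart, and likewise the $i$-Grassmann graph on the $i$-bipart.

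Next I would upgrade these graphs to geometries and distinguish the biparts. For $i\geq 1$ (whence also $j\leq n-i-1\leq n-2$) the $i$-Grassmann graph determines the $i$-Grassmannian geometry, and Chow's theorem identifies its automorphism group with the collineations and dualities of $\PG(n,\L)$. The biparts are told apart by a local invariant: the neighbourhood of a $j$-space, with the induced $i$-adjacency, is the $i$-Grassmann graph of a $\PG(j,\L)$, whereas the neighbourhood of an $i$-space is the $(j-i-1)$-Grassmann graph of $\Res(I)\cong\PG(n-i-1,\L)$, and these are non-isomorphic unless $j=n-i-1$, that is, $i+j=n-1$. In the degenerate case $i=0$ (and dually $j=n-1$) the corresponding Grassmann graph is complete, so instead I would recover the lines of $\PG(n,\L)$ directly as $\{r:\Gamma(p)\cap\Gamma(q)\subseteq\Gamma(r)\}$, the points lying in every $j$-space through $p$ and $q$, reconstruct the point-line geometry, and apply the Fundamental Theorem.

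Finally I would conclude. An automorphism of $\Gamma$ either preserves or swaps the biparts. A bipart-preserving one induces a permutation of the $i$-spaces respecting $i$-Grassmann adjacency, hence an automorphism of the $i$-Grassmannian, which by Chow (or by the Fundamental Theorem when $i=0$) is a collineation or a duality of $\PG(n,\L)$; but a duality reverses containment and therefore cannot act jointly and containment-preservingly on the two biparts, so only collineations survive, yielding a semi-linear permutation of $V_n(\L)$ that then also agrees with $\phi$ on the $j$-spaces (each $j$-space being determined by the $i$-spaces it contains). A bipart-swapping automorphism can occur only when the two local invariants coincide, forcing $i+j=n-1$, and then reverses containment between the two dimension classes, i.e.\ it is a correlation given by a semi-linear bijection to the dual space---the duality anticipated in Main Result~\ref{main1}$(v)$. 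Along the way $n$, the set $\{i,j\}$, and $\L$ are read off as invariants of the reconstructed projective space, which settles the uniqueness of the parameters. The main obstacle I expect is the bookkeeping in the degenerate cases $i=0$ and $j=n-1$, where a bipart's Grassmann graph collapses to a complete graph, together with the self-dual boundary $i+j=n-1$, where the biparts become indistinguishable and correlations genuinely appear.
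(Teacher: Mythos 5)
Your route is genuinely different from the paper's: the paper never invokes Chow, but instead rebuilds, from the poset of finite intersections of neighborhoods, the multipartite incidence graph $\Gamma^n_{[i,j]}(\L)$, then extends it dimension by dimension at both ends until intrinsic termination conditions are met, obtaining $\Gamma^n_{[0,n-1]}(\L)$, reading off $i$, $j$ and $n$ from the numbers of extension steps, and finishing with the Fundamental Theorem alone. Your first step (recovering the two Grassmann adjacencies via maximality in $\cF(v)$) is correct, and so is your treatment of bipart-preserving automorphisms.

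There is, however, a genuine gap in the step where you distinguish the two biparts and exclude bipart-swapping automorphisms. You claim that the neighbourhood of a $j$-space (the $i$-Grassmann graph of $\PG(j,\L)$) and the neighbourhood of an $i$-space (the $(j-i-1)$-Grassmann graph of $\PG(n-i-1,\L)$) are non-isomorphic unless $j=n-i-1$. This fails precisely when $j=i+1$ (in particular for $(i,j)=(0,1)$): then both graphs are complete --- the first on the set of hyperplanes of $\PG(i+1,\L)$, the second on the set of points of $\PG(n-i-1,\L)$ --- and for infinite $\L$ both vertex sets have cardinality $|\L|$, so the two neighbourhoods are isomorphic as abstract graphs even when $i+1<n-i-1$. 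The case $j=i+1$ lies inside the range of the proposition and is exactly the case the paper later leans on (the type I graphs $\Gamma^n_{i,i+1;i}(\L)$), so it cannot be dismissed as peripheral. As written, your argument therefore neither determines which bipart consists of the $i$-spaces nor rules out bipart-swapping automorphisms when $j=i+1<n-i-1$. The conclusion you want is still true, and the repair stays within your framework: compare the \emph{full} Grassmann graphs carried by the two biparts (which you already recovered), not the vertex neighbourhoods; the isomorphism version of Chow's theorem shows that the $i$- and $j$-Grassmann graphs of $\PG(n,\L)$ with $1\le i<j\le n-2$ are isomorphic only if $j=n-1-i$, while for $(i,j)=(0,1)$ and $n\ge 3$ the point bipart is recognized as the one in which every two vertices have a common neighbour (two disjoint lines exist). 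With this repair --- and reading the statement, as the paper itself does immediately after the proposition, as allowing a duality when $i+j=n-1$ --- your proof goes through; note that it then rests on the isomorphism form of Chow's theorem, a strictly stronger ingredient than the Fundamental Theorem, which is all the paper's own reconstruction needs.
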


\begin{proof}
Fix any vertex $v$. Then we consider the poset $P_v=\{\Gamma(v)\cap\Gamma(w_1)\cap\ldots\Gamma(w_t):w_1,\ldots,w_t \in V^v(\Gamma), t\in\N\}$. The length of a maximal chain in $P_v$ is precisely $j-i$.   We define a new graph $\Gamma'$ as follows. The vertices are the intersections of a finite number of neighborhoods of vertices of $V^v(\Gamma)$. Adjacency is containment made symmetric. It is clear that $\Gamma'$ is isomorphic to $\Gamma^n_{[i,j]}(\L)$. Now we can extend this graph ``at both ends'' as follows. We define a new graph $\Gamma^i$ where the vertices are the $i$-spaces, adjacent when they are adjacent in $\Gamma'$ to a common vertex representing an $(i+1)$-space. The graph $\Gamma^i$ is the $i$-Grassmann graph and hence it has two kinds of maximal cliques: all $i$-spaces contained in an $(i+1)$-space (these maximal cliques are visible in $\Gamma'$), and all $i$-space contained in an $(i-1)$-space. We add the latter maximal cliques to the graph $\Gamma'$ with natural adjacency between the new vertices and the $i$-spaces, and a new vertex $I'$ is adjacent to a vertex $L$ representing an $\ell$-space, $i+1\leq\ell\leq j$, if  $I'$ and $L$ are adjacent to a common $i$-space. We do the same ``at the other end'' (with $j$-spaces) and obtain the graph $\Gamma^n_{i-1,j+1}(\L)$, we keep doing this until every pair of the new vertices is adjacent to a common old vertex introduces at the previous step ``next to it''. Then we have $\Gamma^n_{[0,j+i]}(\L)$. This uniquely determines $i$ (and hence $j$, since we already knew $j-i$). But now we can extend this graph ``at the right'' $n-i-j-1$ times to obtain $\Gamma^n_{[0,n-1]}(\L)$, uncovering $n$. The Fundamental Theorem of Projective Geometry now applies, $\L$ follows and so does the proposition.    
\end{proof}

So we are left with the graphs $\Gamma^n_{i,j;i}(\F)$, $i+1= j\leq n-i-1$ or $(i,j)=(0,n-1)$, which we call of \emph{type} I; graphs $\Gamma^n_{i,j;\geq k}(\F)$ with $0\leq k<\min\{i,j\}$ and $i+j\leq n-1$, which we call of \emph{type} II; graphs $\Gamma^n_{i,j; k}(\F)$ with $-1\leq k<\min\{i,j\}$ and $i+j\leq n-1$, which we call of \emph{type} III.

Now we characterize the graphs $\Gamma^n_{i,j;i}(\F)$ with $i+1= j\leq n-i-1$, among these.

\begin{lemma}
Let $\Gamma$ be a graph of type \emph{I,II} or \emph{III}. Then $\Gamma\cong\Gamma^n_{i,j;i}(\F)$ with $i+1= j\leq n-i-1$ if and only if every vertex is the intersection $\Gamma(v)\cap\Gamma(w)$ of the neighborhoods of two vertices $v,w$. 
\end{lemma}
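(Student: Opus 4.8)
The plan is to prove the two implications separately: the forward one by an explicit pair of constructions, and the backward one in contrapositive form, by exhibiting in every graph of type I, II or III that is \emph{not} of the form $\Gamma^n_{i,j;i}(\F)$ with $i+1=j\le n-i-1$ a vertex that cannot be written as $\Gamma(v)\cap\Gamma(w)$, i.e.\ one that is never the unique common neighbour of two vertices of the other bipart.

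For the forward direction assume $\Gamma\cong\Gamma^n_{i,j;i}(\F)$ with $i+1=j\le n-i-1$, so that adjacency between the $i$-spaces and the $(i+1)$-spaces is containment. If $u$ is an $(i+1)$-space $J$, I pick two distinct $i$-subspaces $I_1,I_2$ of $J$; since two distinct hyperplanes of $J$ span $J$, the common neighbours of $I_1$ and $I_2$ are exactly the $(i+1)$-spaces through $\<I_1,I_2\>=J$, so $\Gamma(I_1)\cap\Gamma(I_2)=\{J\}$. If $u$ is an $i$-space $I$, I use that $j\le n-i-1$ gives $i+2\le n$, whence there exist two $(i+1)$-spaces $J_1,J_2$ with $J_1\cap J_2=I$ (so $\dim\<J_1,J_2\>=i+2\le n$); their common neighbours are the $i$-spaces contained in $J_1\cap J_2=I$, so $\Gamma(J_1)\cap\Gamma(J_2)=\{I\}$. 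Thus every vertex is an intersection of two neighbourhoods.

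For the backward direction I treat the three remaining families. If $\Gamma$ is of type I with $(i,j)=(0,n-1)$ and $n\ge3$, I take $u$ to be any point $p$: the common neighbours of two hyperplanes $H_1,H_2$ are the points of $H_1\cap H_2$, an $(n-2)$-space, which carries more than one point, so $p$ is never a unique common neighbour. For a graph of type II or III I claim that \emph{no} vertex works; by the symmetry $\Gamma^n_{i,j;\geq k}=\Gamma^n_{j,i;\geq k}$ it suffices to treat a vertex $u=J_0$ that is a $j$-space, together with an arbitrary pair of $i$-spaces $I_1,I_2$ adjacent to $J_0$. Put $K_t:=I_t\cap J_0$, which has dimension $k$ in type III, and dimension $\ge k$ in type II, in which case I fix a $k$-subspace of it and still call it $K_t$. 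I must then produce a second common neighbour $J'\ne J_0$.

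When $\<K_1,K_2\>\subsetneq J_0$ I use a pencil: choose a hyperplane $M$ of $J_0$ containing $K_1\cup K_2$, and, using Lemma~\ref{choose} together with $i+j\le n-1$, a point $p\notin\<J_0,I_1\>\cup\<J_0,I_2\>$, so that the $(j+1)$-space $N=\<J_0,p\>$ satisfies $N\cap I_1=K_1$ and $N\cap I_2=K_2$; then each of the $|\F|+1\ge 2$ members $J'$ of the pencil of $j$-spaces between $M$ and $N$ meets $I_1$ and $I_2$ in exactly $K_1$ and $K_2$ and is a common neighbour (in type II the upper bound $N$ is superfluous, since any of the many $j$-spaces through $M$ already meets each $I_t$ in dimension $\ge k$). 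The principal obstacle is the complementary \emph{spanned} case $\<K_1,K_2\>=J_0$, which forces $j\le 2k$ and destroys the pencil. Here I instead keep $K_2$ fixed and perturb $K_1$: since $k<i$, inside $I_1$ there are many $k$-spaces $K_1'\supseteq K_1\cap K_2$ with $K_1'\ne K_1$, and for such a $K_1'$, chosen generically by Lemma~\ref{choose} so as to keep $K_1'\cap I_2\subseteq K_1\cap K_2$, the $j$-space $J'=\<K_1',K_2\>$ again meets $I_1$ in exactly $K_1'$ and $I_2$ in exactly $K_2$, while $J'\ne J_0$. The existence of a suitable $K_1'$, and of the point $p$ in the boundary situation $k=-1$, $i+j=n-1$, is precisely where very small fields fail; this is why the graphs $\Gamma^n_{i,j;k}(2)$ with $k<\min\{i,j\}$ were set aside at the outset, and for $|\F|\ge 3$ or infinite the required generic choices exist. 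I expect this spanned subcase, and the uniformity of the generic choices over all admissible fields, to be the main technical difficulty, the remainder being routine dimension bookkeeping in the residues $\Res(K)$.
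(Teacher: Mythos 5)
Your forward direction and your treatment of the type I graphs $\Gamma^n_{0,n-1;0}(\F)$, $n\geq 3$, are fine (and more explicit than the paper, which disposes of both with ``clearly''). The genuine problem is the case you yourself flag as the ``boundary situation'' and then wave away: type III with $k=-1$ and $i+j=n-1$. This case is squarely within the scope of the lemma: at this stage of the paper, type III still includes $k=-1$ (the reduction to $k\neq -1$ via bipartite complements happens only later, in Lemma~\ref{k=-1}), and the only graphs set aside at the outset are the $\Gamma^n_{i,j;k}(2)$. So, for instance, the non-incidence graph of points versus lines of $\PG(2,3)$, i.e.\ $\Gamma^2_{0,1;-1}(3)$, or the disjointness graph on two copies of the line set of $\PG(3,3)$, i.e.\ $\Gamma^3_{1,1;-1}(3)$, must be handled here. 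For such graphs your argument collapses entirely: since the $I_t$ are disjoint from $J_0$ and $i+j=n-1$, we have $\dim\<J_0,I_t\>=i+j+1=n$, so $\<J_0,I_t\>$ is the whole space and the point $p\notin\<J_0,I_1\>\cup\<J_0,I_2\>$ exists over \emph{no} field whatsoever --- the obstruction is dimensional, not one of field size --- while your fallback (perturbing $K_1$ inside $I_1$) is vacuous when $k=-1$, because the empty space admits no perturbation. Hence your assertion that ``for $|\F|\geq 3$ or infinite the required generic choices exist'' is false exactly in this subfamily, and no vertex of these graphs is shown to violate the property.

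The paper's own argument sidesteps this trap by working in the other bipart and inside a residue: for any two $j$-spaces $J_1,J_2$ and any $k$-space $K\subseteq J_1\cap J_2$ (when $k=-1$, $K$ is the empty space, so \emph{every} pair qualifies), it produces \emph{two} $i$-spaces meeting both $J_1$ and $J_2$ exactly in $K$, namely by choosing two $(i-k-1)$-spaces of $\Res(K)$ disjoint from the $(j-k-1)$-spaces $J_1/K$ and $J_2/K$; the inequality $i+j\leq n-1$ provides the room for this uniformly in $k$, and no ambient point $p$ is ever needed. If you wish to keep your strategy of manufacturing a second common $j$-space neighbour, you must add a separate argument for $k=-1$, $i+j=n-1$; e.g.\ that two subspaces possessing a common complement always possess at least two when $|\F|\geq 3$ (true --- take a linear map acting as the identity on $I_1$ and as a scalar $\mu\neq 0,1$ on a complement of $I_1\cap I_2$ in $I_2$, and apply it to $J_0$), but that is precisely the content your construction was supposed to deliver. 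A secondary, lesser issue: in your spanned case the exactness claims ($J'=\<K_1',K_2\>$ meets $I_1$ in precisely $K_1'$ and $I_2$ in precisely $K_2$) are asserted ``generically'' rather than verified, whereas the type III adjacency requires exact dimensions. As it stands, the proof has a real hole.
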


\begin{proof}
Clearly the property holds if $\Gamma\cong\Gamma^n_{i,j;i}(\F)$, with $i+1= j\leq n-i-1$. Now suppose $\Gamma$ satisfies the stated property. Then $\Gamma$ is not of type II or III since through any given $k$-space $K$ contained in two $j$-spaces $J_1,J_2$, one can select at least two $i$-spaces intersecting two given $j$-spaces only in $K$ (indeed, projecting from $K$, this amounts to choose two $(i-k-1)$-spaces disjoint from two given $(j-k-1)$-spaces in $(n-k-1)$-dimensional space, which can clearly be done). 
\end{proof}

By Proposition~\ref{propk=i}, every automorphism  of $\Gamma^n_{i,i+1;i}(\F)$ is induced by a semi-linear mapping of the underlying vector space, or a duality (which happens if $2i+2=n$).

From now on, we do not call a graph $\Gamma^n_{i,i+1;i}(\F)$ of type I anymore, since we already characterized it.


 
 
 
 
 

Let $\Gamma$ be a graph of type I, type II or type III. We introduce a property of triples and quadruples, respectively, of vertices in the same bipart. 

Suppose one of the biparts of $\Gamma$ are the $j$-spaces, and the other consists of the $i$-spaces. Let $J_1,J_2,J_3$ be three $j$-spaces. Then we say that $\{J_1,J_2,J_3\}$ is a \emph{$\Gamma$-round-up triple} if no $i$-space is $\Gamma$-adjacent to exactly two of $J_1,J_2,J_3$ and some $i$-space is $\Gamma$-adjacent to all of $J_1,J_2,J_3$. Also, $\{J_1,J_2,J_3\}$ is a \emph{regular round-up triple} if $J_1\cap J_2\cap J_3$ is a $(j-1)$-space and if $\<J_1,J_2,J_3\>$ is a $(j+1)$-space.

Now let $J_1,J_2,J_3,J_4$ be four $j$-spaces in $\PG(n,\L)$. Then we say that $\{J_1,J_2,J_3,J_4\}$ is a \emph{$\Gamma$-round-up quadruple} if every vertex that is $\Gamma$-adjacent to at least two among $J_1,J_2,J_3,J_4$ is adjacent to at least three of them, and some vertex is adjacent to at least three of them. Also, $\{J_1,J_2,J_3,J_4\}$ is called a \emph{regular round-up quadruple} if the four $j$-spaces all contain a fixed $(j-1)$-space and are themselves contained in a fixed $(j+1)$-space. 
 

We first investigate when $\Gamma$ contains $\Gamma$-round-up triples, then prove that, when they do, these are precisely the regular round-up triples. Using these regular round-up triples, we determine the parameters of the graph in a canonical way and show that $\Gamma$ determines $\PG(n,\L)$. 

Then we go on doing the same with $\Gamma$-round-up quadruples for the remaining graphs. 

\subsection{$\Gamma$-round-up triples}


We start with a characterization of regular round-up triples and quadruples. 

\begin{lemma}\label{lem0}
Let $J_1,J_2,J_3,J_4$ be (not necessarily different) $j$-spaces such that the intersection of the distinct pairs is a fixed subspace $D$, say of dimension $d\geq-1$. Then $d=j-1$ and $\dim\<J_1,J_2,J_3,J_4\>=  j+1$ if and only if every line intersecting two different members of $\{J_1,J_2,J_3,J_4\}$, say $J_{\ell_1},J_{\ell_2}$, also intersects one of $J_{\ell_3},J_{\ell_4}$, with $\{\ell_1,\ell_2,\ell_3,\ell_4\}=\{1,2,3,4\}$.   
\end{lemma}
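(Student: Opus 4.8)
The plan is to prove both implications by passing to the residue $\Res(D)$. First observe that since $D$ is the common value of $J_a\cap J_b$ over all pairs of distinct members, we have $D\subseteq J_c$ for every $c$ (write $D=J_c\cap J_{c'}\subseteq J_c$ for any $J_{c'}\neq J_c$). Hence each $J_c$ projects in $\Res(D)$ to a subspace $\bar J_c:=J_c/D$ of dimension $e:=j-d-1$, and $\bar J_a\cap\bar J_b=(J_a\cap J_b)/D$ is empty for distinct members. The regularity condition ``$d=j-1$ and $\dim\<J_1,J_2,J_3,J_4\>=j+1$'' is equivalent to ``$e=0$ and the points $\bar J_1,\dots,\bar J_4$ lie on a common line of $\Res(D)$''. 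The key dictionary is: a line $\ell$ meeting $D$ meets every $J_c$, so only lines $\ell$ disjoint from $D$ are relevant; such an $\ell$ projects to a genuine line $\bar\ell=\<D,\ell\>/D$, and if $\ell$ meets $J_c$ then $\bar\ell$ meets $\bar J_c$. The contrapositive --- $\bar\ell\cap\bar J_c=\emptyset$ forces $\ell\cap J_c=\emptyset$ --- is what I will use to certify that a line misses a given $J_c$.

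For the forward direction (regularity $\Rightarrow$ line condition), let $\ell$ meet two different members $J_{\ell_1}\neq J_{\ell_2}$. We may assume $\ell$ is disjoint from $D$. Then $\bar\ell$ passes through the two distinct points $\bar J_{\ell_1},\bar J_{\ell_2}$, hence equals their connecting line $L$, which by regularity contains $\bar J_{\ell_3}$ and $\bar J_{\ell_4}$. Thus $J_{\ell_3}\subseteq\<D,\ell\>$, a $(j+1)$-space in which the hyperplane $J_{\ell_3}$ meets the line $\ell$: either $\ell\subseteq J_{\ell_3}$, which (as $\ell$ meets $J_{\ell_1}$ outside $D$) forces $J_{\ell_3}=J_{\ell_1}$ and is harmless, or $\ell$ meets $J_{\ell_3}$ in a point. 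Either way $\ell$ meets $J_{\ell_3}$ (and likewise $J_{\ell_4}$), as required.

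For the converse I argue by contraposition, exhibiting a \emph{bad} line --- one meeting two different members but neither of the remaining two --- whenever regularity fails. If $d\leq j-2$, fix distinct $J_1,J_2$ and any point $a\in J_1\setminus D$ (automatically $a\notin J_3\cup J_4$, since $J_1\cap J_3=J_1\cap J_4=D$). A line $ab$ with $b\in J_2\setminus D$ is automatically disjoint from $D$ (because $\<a,D\>\subseteq J_1$, so $\<a,D\>\cap J_2=D$) and meets $J_3$ exactly when $b\in\<a,J_3\>$; since $\dim\<J_2,J_3\>=2j-d\geq j+2>j+1=\dim\<a,J_3\>$, the trace $S_3:=J_2\cap\<a,J_3\>$ is a proper subspace of $J_2$, so is contained in a hyperplane of $J_2$, and similarly for $S_4$. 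As two hyperplanes never cover a projective space, some $b\in J_2\setminus(S_3\cup S_4)$ exists, and $ab$ is a bad line. If instead $d=j-1$ but the points $\bar J_1,\dots,\bar J_4$ are not collinear, a short incidence argument produces two of them whose connecting line in $\Res(D)$ avoids the other two (take three non-collinear points; their three sides already avoid the respective third point, and the fourth point can be avoided after possibly switching to another side through the common vertex). Lifting that residue line through points of the two chosen $J$'s yields a line disjoint from $D$ missing the other two $J$'s, i.e.\ a bad line.

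The main obstacle is the converse, and within it two points require care. First, the mode $d\leq j-2$ must work over every skew field, including $\L=\GF(2)$ where crude point-counts are tight; this is exactly why I route it through the field-independent fact that two hyperplanes cannot cover a projective space (rather than Lemma~\ref{choose}, which would also serve but is heavier here). Second, coincidences among the $J_i$ need monitoring: genuine repetitions compatible with the statement occur essentially only in the regular configuration over small fields (a line of $\Res(D)$ may carry fewer than four points), where the forward direction already applies; when producing bad lines I therefore select the relevant pair among the \emph{distinct} members, which is always possible once regularity fails with at least two distinct members present.
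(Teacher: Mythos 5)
Your proof is correct on the configurations where the lemma is meaningful and actually applied (at least three distinct spaces among $J_1,\dots,J_4$), but it takes a genuinely different route from the paper's. The paper proves ``line condition $\Rightarrow$ regularity'' directly: a pigeonhole argument --- joining a point $p_1\in J_1\setminus D$ to three points of a line $L\subseteq J_2$ disjoint from $D$, so that two of the joining lines meet the same space, say $J_3$, whence the plane $\<p_1,L\>$ meets $J_3$ in a line $L'$ and $L\cap L'\in J_2\cap J_3=D$, a contradiction --- forces $d=j-1$; a containment argument (at most one of $J_3,J_4$ escapes $\<J_1,J_2\>$, and symmetrically) then pins the span down to $\<J_1,J_3\>$. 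You instead prove the contrapositive by exhibiting explicit bad lines: for $d\leq j-2$ via the fact that a projective space is never the union of two proper subspaces (your traces $S_3,S_4$), and for $d=j-1$ with non-collinear residue points via a triangle-side argument in $\Res(D)$ followed by lifting. Both arguments work over every skew field and are of comparable length; your residue dictionary also makes the forward implication, which the paper dismisses as easy to see, fully explicit. One caveat: your closing claim that a bad line can be produced ``once regularity fails with at least two distinct members present'' is false --- for the multiset $\{A,A,B,B\}$ with $\dim(A\cap B)\leq j-2$, any line meeting two different members automatically meets a member of the complementary pair (by the repetition), so no bad line exists and the stated equivalence itself fails in that configuration. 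The paper shares this blind spot: its assertion that the lemma is trivial when $|\{J_1,J_2,J_3,J_4\}|\leq 2$ is equally inaccurate. Since Lemmas~\ref{lem5} and~\ref{lem6} invoke the lemma only with three or four distinct spaces, neither your proof nor the paper's is harmed by this.
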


\begin{proof}
The lemma is trivial if $|\{J_1,J_2,J_3,J_4\}|\leq 2$. So we may assume that all of $J_1,J_2,J_3$ are distinct, and that either $J_4$ is distinct from all of $J_1,J_2,J_3$, or $J_3=J_4$. Also, if $d=j-1$ and $\dim\<J_1,J_2,J_3,J_4\>=  j+1$, then it is easy to see that the stated condition is satisfied.

Suppose now the stated condition is satisfied. If $j=0$, then the assertion is easy. Suppose now $j\geq 1$ and assume  for a contradiction that $d<j-1$. Then there is some line $L$ contained in $J_2\setminus D$. Consider any point $p_1$ in $J_1\setminus D$ and let $p_2,p_2',p_2''$ be three points on $L$. Two of the lines $p_1p_2,p_1p_2',p_1p_2''$ must then intersect either $J_3$ or $J_4$, say $J_3$, by the condition. Hence the plane $\<p_1,L\>$ intersects $J_3$ in a line $L'$. But then the point $L\cap L'$ belongs to $J_2\cap J_3$, hence to $D$, a contradiction. Hence  $d=j-1$.

The line joining a point of $J_1\setminus D$ with a point of $J_2\setminus D$ intersects $J_3\cup J_4$, clearly in a point not belonging to $D$. Hence at most one of $J_3,J_4$, say $J_4$, does not belong to $\<J_1,J_2\>$.  Similarly, one of $J_1,J_2$  belongs to $\<J_3,J_4\>$, say $J_1$. But then $J_4$ and $J_2$ belong to $\<J_1,J_3\>$, which implies $\dim\<J_1,J_2,J_3,J_4\>=\dim\<J_1,J_3\>=j+1$.
\end{proof}

If $\Gamma$ is of type I, then it is easy to verify that a $\Gamma$-round-up triple is either a set of three collinear points or a set of three hyperplanes containing the same $(n-2)$-space. 

Now suppose that $\Gamma$ is of type II, with $\Gamma\cong\Gamma^n_{i,j;\geq k}(\L)$, $k>-1$ (we do not assume $i\leq j$) or $\Gamma$ is of type III, with  $\Gamma\cong\Gamma^n_{i,j; k}(\L)$, $k\geq-1$. In these cases, we note the following equivalent more manageable condition for being a $\Gamma$-round-up triple. For type II, it follows immediately from the fact that, if $U,V$ are two subspaces of $\PG(n,\L)$, with $\dim V\leq i$, $\dim U=j$ and $\dim (U\cap V)<k$, then there is an $i$-space $W$ containing $V$ with $W\cap U=V\cap U$. Note that we will only state things for one choice of $(i,j)$; interchanging $i$ and $j$ usually results in another property, which we assume tacitly. 

\begin{observe}\label{obs}
A triple $\{J_1,J_2,J_3\}$ of $j$-spaces is a $\Gamma$-round-up triple, with $\Gamma$ of type \emph{II}, if and only if every subspace of dimension at most $i$ spanned by two $k$-spaces contained in two respective members of $\{J_1,J_2,J_3\}$ intersects the third member in a subspace of dimension at least $k$. 
\end{observe}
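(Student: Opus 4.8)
The plan is to read everything through the translation of adjacency: an $i$-space $W$ is $\Gamma$-adjacent to a $j$-space $J_m$ precisely when $\dim(W\cap J_m)\geq k$, i.e.\ when $W$ contains a $k$-space lying inside $J_m$. The engine of the argument is the extension fact quoted just before the statement: if $V$ is a subspace with $\dim V\leq i$ and $J$ is a $j$-space with $\dim(V\cap J)<k$, then $V$ can be completed to an $i$-space $W\supseteq V$ with $W\cap J=V\cap J$. First I would record a one-line proof of this fact: pass to the residue $\Res(V)$, in which the image of $J$ has dimension $j-\dim(V\cap J)-1$, and look for an $(i-\dim V-1)$-space disjoint from it; such a space exists by a dimension count (Lemma~\ref{choose}), since $i+j-\dim(V\cap J)-1<n$ is guaranteed by $i+j\leq n-1$ together with $\dim(V\cap J)\geq-1$. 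The role of this fact is that it lets me replace the \emph{span} $S$ of two $k$-spaces by an honest $i$-space $W\supseteq S$ without enlarging the intersection with the third member; this control of $W\cap J=S\cap J$, rather than the mere containment $S\cap J\subseteq W\cap J$, is exactly what makes the reformulation correct.

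For the forward implication I would argue by contraposition. Assuming $\{J_1,J_2,J_3\}$ is a $\Gamma$-round-up triple (we may take the three spaces distinct, the remaining cases being trivial), let $S=\<K_a,K_b\>$ with $K_a\subseteq J_a$, $K_b\subseteq J_b$ two $k$-spaces, $\dim S\leq i$, and let $J_c$ be the third member. If $\dim(S\cap J_c)<k$, I apply the extension fact with $V=S$ and $J=J_c$ to obtain an $i$-space $W\supseteq S$ with $\dim(W\cap J_c)=\dim(S\cap J_c)<k$. Since $W$ contains $K_a$ and $K_b$, it is adjacent to $J_a$ and to $J_b$, but not to $J_c$; hence $W$ is adjacent to exactly two members, contradicting the defining property of a round-up triple. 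Therefore $\dim(S\cap J_c)\geq k$, which is the asserted condition. (The statement with the roles of $i$ and $j$ exchanged is proved in the same way.)

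For the converse I would assume the intersection condition and recover both halves of the round-up definition. If an $i$-space $W$ is adjacent to two members $J_a,J_b$, choose $k$-spaces $K_a\subseteq W\cap J_a$ and $K_b\subseteq W\cap J_b$; then $S=\<K_a,K_b\>\subseteq W$ satisfies $\dim S\leq i$, so by hypothesis $\dim(S\cap J_c)\geq k$, and $S\subseteq W$ forces $\dim(W\cap J_c)\geq k$, i.e.\ $W$ is adjacent to $J_c$ as well; thus no $i$-space is adjacent to exactly two members. For the existence of an $i$-space adjacent to all three, I would start from a valid span $S$ — for instance, taking $K_a=K_b$ inside a pairwise intersection $J_a\cap J_b$ of dimension at least $k$ produces an $S$ of dimension $k\leq i$ — which by the condition meets each member in dimension at least $k$, and then extend $S$ to any $i$-space $W$, which therefore contains a $k$-space of every $J_m$ and is adjacent to all three. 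The step I expect to be the genuine obstacle is precisely this existential clause: one must secure a suitable span $S$, equivalently that the pairwise intersections are large enough for two $k$-spaces to span something of dimension at most $i$, and it is here that the hypothesis $k<\min\{i,j\}$ (guaranteeing $k$-spaces inside the relevant intersections) combines with the extension fact to prevent the argument from degenerating. The forward direction, in contrast, is a clean application of the extension fact once adjacency has been rephrased as containing a $k$-space of the member in question.
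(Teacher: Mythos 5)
Your forward direction and the first half of your converse (no $i$-space adjacent to exactly two members) are correct and follow exactly the paper's route: the paper derives the whole observation ``immediately'' from the very extension fact you quote, and your justification of that fact (Lemma~\ref{choose} applied in $\Res(V)$, using $i+j\leq n-1$) and your use of it (extending the span $S$ to an $i$-space $W$ with $W\cap J_c=S\cap J_c$) are precisely what is intended. These are also the only parts of the observation that the paper invokes in Lemmas~\ref{lem1} and~\ref{lem5}.

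The genuine gap is the existence clause of the converse, which you correctly flag as ``the genuine obstacle'' but do not close — and in fact it cannot be closed as stated. Your recipe requires some pair $J_a,J_b$ with $\dim(J_a\cap J_b)\geq k$, or at the very least the existence of \emph{some} span of two $k$-spaces of dimension at most $i$; the stated condition supplies neither, because it is universally quantified and can hold vacuously, and the appeal to $k<\min\{i,j\}$ does not repair this. Concretely, take $(i,j,k)=(2,2,1)$ and $n\geq 5$, and let $J_1,J_2,J_3$ be three pairwise disjoint planes in $\PG(n,\L)$ (these exist already in $\PG(5,\L)$). Any two lines lying in two distinct members are disjoint and hence span a $3$-space, so there are no admissible spans at all and the condition of the observation holds vacuously; yet no plane $W$ can meet even two of the $J_m$ in lines (two lines of the plane $W$ would meet in a point of $J_a\cap J_b=\emptyset$), so no $i$-space is adjacent to all three and the triple is \emph{not} a $\Gamma$-round-up triple. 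Thus the ``if'' direction, read literally, fails in this degenerate situation; your argument proves it exactly under the additional hypothesis that at least one admissible span exists (automatic when $i\geq 2k+1$, and in general equivalent to $\dim(J_a\cap J_b)\geq 2k-i$ for some pair). In fairness, the paper's own one-line proof glosses over the same point, and in the paper the converse is only ever applied to regular round-up triples (Lemma~\ref{lemma5b}), where the common $(j-1)$-space contains $k$-spaces (as $k<j$) and admissible spans obviously exist; so the defect lies in the formulation of the statement rather than in its use — but your proposal, as written, does not and cannot establish the statement as given.
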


\begin{lemma}\label{lemextra}
A triple $\{J_1,J_2,J_3\}$ of $j$-spaces is a $\Gamma$-round-up triple, with $\Gamma$ of type \emph{III}, if and only if every subspace $L$ of dimension at most $i$ spanned by two $k$-spaces contained in two respective members of $\{J_1,J_2,J_3\}$ and intersecting these subspaces in the respective $k$-spaces,  intersects the third member in a subspace of dimension precisely $k$. 
\end{lemma}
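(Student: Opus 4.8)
The plan is to prove Lemma~\ref{lemextra} in close analogy with Observation~\ref{obs}, the decisive difference being that for a graph of type III adjacency means ``intersecting in a subspace of dimension \emph{exactly} $k$'', so I must control the relevant intersection dimensions \emph{from above} as well as from below. The tool for this is Lemma~\ref{choose}: it lets me enlarge a given subspace to an $i$-space without creating any \emph{new} intersection with a prescribed finite family of subspaces. Throughout I treat the condition as a statement about subspaces $L$ with $\dim L\le i$ that contain two $k$-spaces $K_{\ell_1}\subseteq J_{\ell_1}$ and $K_{\ell_2}\subseteq J_{\ell_2}$ as their exact traces, i.e.\ $L\cap J_{\ell_1}=K_{\ell_1}$ and $L\cap J_{\ell_2}=K_{\ell_2}$; the content to be established is that such an $L$ then meets the third member $J_{\ell_3}$ in a subspace of dimension precisely $k$.

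For the forward implication (round-up triple $\Rightarrow$ condition) I would start from such an $L$ and pass to the residue $\Res(L)$, where $J_1,J_2,J_3$ have images $\langle L,J_m\rangle/L$ of dimension $j-\dim(L\cap J_m)-1$. Choosing, via Lemma~\ref{choose}, an $(i-\dim L-1)$-space of $\Res(L)$ disjoint from these three images and taking its preimage produces an $i$-space $I\supseteq L$ with $I\cap J_m=L\cap J_m$ for all $m$. Then $I$ is $\Gamma$-adjacent to the two members $J_{\ell_1},J_{\ell_2}$, so the defining clause of a $\Gamma$-round-up triple (no vertex is adjacent to exactly two) forces $\dim(I\cap J_{\ell_3})=k$, and since $I\cap J_{\ell_3}=L\cap J_{\ell_3}$ this is exactly the desired conclusion.

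For the backward implication (condition $\Rightarrow$ round-up triple) I would verify the two defining clauses separately. That no $i$-space is $\Gamma$-adjacent to exactly two members is immediate: an $i$-space $I$ meeting $J_{\ell_1}$ and $J_{\ell_2}$ each in a $k$-space is itself an admissible $L$ (it has dimension $i$ and the two required exact $k$-traces), so the condition yields $\dim(I\cap J_{\ell_3})=k$, making $I$ adjacent to all three. For the remaining clause, that \emph{some} $i$-space is adjacent to all three, I would build with Lemma~\ref{choose} an $i$-space whose traces on $J_1$ and $J_2$ are $k$-spaces---this is where the hypothesis $i+j\le n-1$ leaves enough room---and the condition then upgrades its trace on $J_3$ to a $k$-space as well.

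The step I expect to be the main obstacle is the extension/construction via Lemma~\ref{choose}, namely checking that its hypotheses are genuinely met. One has to translate the constraints $\dim(L\cap J_m)=k$ into bounds on the dimensions of the images $\langle L,J_m\rangle/L$ and then fit these into the template of Lemma~\ref{choose} (one subspace of dimension $b$, two of dimension $\le b-1$, one of dimension $\le b-2$) inside $\Res(L)=\PG(n-\dim L-1,\L)$. Here the bookkeeping turns out to be controlled exactly by the standing inequalities: the $(i-\dim L-1)$-space wanted corresponds to $b=n-i-1$, and the largest image has dimension at most $j\le n-i-1=b$ precisely because $i+j\le n-1$, while $k<\min\{i,j\}$ keeps the traces of the two chosen members strictly below the full members. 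It is this upper-bound control on intersection dimensions that is simply absent in the type II situation of Observation~\ref{obs}, which is why the present assertion needs a genuine argument. Finally I would separately dispose of the degenerate cases (small ambient dimension, coincidences among the $J_m$, and the value $k=-1$, read with the usual convention that a $(-1)$-space is empty).
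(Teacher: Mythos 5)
Your forward implication (that a $\Gamma$-round-up triple satisfies the intersection condition) is correct and is essentially the paper's own argument: the paper likewise works in a residue and uses Lemma~\ref{choose} to extend to an $i$-space without creating new intersections, then invokes the ``no vertex adjacent to exactly two'' clause. Your version is in fact slightly more direct, since you prescribe the traces on all three members $J_1,J_2,J_3$ at once, whereas the paper first fixes the traces on two members only, uses the round-up property to obtain a $k$-space $K_3=I\cap J_3$, and then perturbs $I$ to reach a contradiction. Your dimension bookkeeping (the target $b=n-i-1$, the largest image of dimension at most $j\le n-i-1$ by $i+j\le n-1$, the two other images of dimension $j-k-1\le n-i-2$ using $k\ge 0$) is exactly what makes Lemma~\ref{choose} applicable, so this half is sound.

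The backward implication is where your proposal breaks down, and the break cannot be repaired. You assert that an $i$-space $I$ meeting $J_{\ell_1}$ and $J_{\ell_2}$ in $k$-spaces ``is itself an admissible $L$''; it is not, because the condition quantifies only over subspaces \emph{spanned} by the two $k$-spaces, and $I$ is in general strictly larger than $\<I\cap J_{\ell_1},I\cap J_{\ell_2}\>$. Applying the condition to the actual span $L=\<I\cap J_{\ell_1},I\cap J_{\ell_2}\>\subseteq I$ only yields $\dim(I\cap J_{\ell_3})\ge\dim(L\cap J_{\ell_3})=k$, which does not exclude that $I$ meets $J_{\ell_3}$ in a space of dimension greater than $k$, i.e.\ that $I$ is adjacent to exactly two members (the same conflation affects your existence clause, though that one could be repaired by also prescribing the trace on the third member). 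In fact no argument can close this gap, because the ``if'' direction of the equivalence fails as stated: take a regular round-up triple with $k=0$, i.e.\ three $j$-spaces through a common $(j-1)$-space $D$ inside a common $(j+1)$-space. Every admissible $L$ is then either a point of $D$ or a line joining points $p_1\in J_{\ell_1}\setminus D$ and $p_2\in J_{\ell_2}\setminus D$, and each such $L$ meets the third member in exactly a point, so the condition holds; yet by Lemma~\ref{lemma5b}, whose proof exhibits an $i$-space adjacent to exactly two of the members, this triple is \emph{not} a $\Gamma$-round-up triple (the same phenomenon occurs whenever $i=k+1$). You are in good company: the paper's own proof also establishes only the ``only if'' direction, and that is the only direction ever used later (in Lemmas~\ref{lem1} and~\ref{lem5}), so your forward argument covers everything that is actually needed; but your backward argument asserts something false and must be dropped rather than patched.
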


\begin{proof}
The proof is a simplified version of the proof of Lemma~\ref{lem2} (see below). 

Assume for a contradiction that the assertion is false. By renumbering if necessary we may then assume that $K_1$ and $K_2$ are $k$-spaces contained in $J_1,J_2$, respectively, and that $\<K_1,K_2\>\cap J_3$ is not $k$-dimensional.
Our goal is to construct an $(i-1)$-space $I'$ containing $\<K_1,K_2\>$  and intersecting $J_3$ in a $(k-1)$-space. 

We consider an arbitrary $i$-space $I$ intersecting $J_1$ in $K_1$ and $J_2$ in $K_2$ (it is easy to see that such a subspace exists). Since $\{J_1,J_2,J_3\}$ is a $\Gamma$-round-up triple, we know that $I$ intersects $J_3$ in a $k$-space $K_3$. Since $K_3\not\subseteq\<K_1,K_2\>$ by our assumption, we can consider a hyperplane $I'$ of $I$ containing $\<K_1,K_2\>$ and not containing $K_3$. 

In $\Res(I')$, the spaces $\<J_1,I'\>/I'$, $\<J_2,I'\>/I'$ and $\<J_3,I'\>/I'$ have dimension $j-k-1$, $j-k-1$ and $j-k$, respectively. By Lemma~\ref{choose}, we can find a point in $\Res(I')$ which avoids these three subspaces. This point corresponds to an $i$-space $I^*$ intersecting $J_1$ and $J_2$ in $k$-spaces and intersecting $J_3$ in a $(k-1)$-space. This clearly contradicts the fact that $\{J_1,J_2,J_3\}$ is a $\Gamma$-round-up triple. 
\end{proof}

Suppose now $\{J_1,J_2,J_3\}$ is a $\Gamma$-round-up triple. In the next statements, the arguments are usually independent of the type of $\Gamma$; however, if they differ, then we write the argument for type II in the ordinary way, whereas the argument for type III will be written in square brackets. It will be clear which part of the argument for type II it replaces.



\begin{lemma}\label{lem1}
The $\Gamma$-round-up triple $\{J_1,J_2,J_3\}$ satisfies $J_1\cap J_2= J_3\cap J_1=J_2\cap J_3$.
\end{lemma}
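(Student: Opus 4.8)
The plan is to reformulate the conclusion symmetrically and then argue by contradiction straight from the definition of a $\Gamma$-round-up triple. The three asserted equalities are symmetric in $J_1,J_2,J_3$ and amount to the single containment $J_1\cap J_2\subseteq J_3$ together with its cyclic variants: once each pairwise intersection lies in the remaining $j$-space, all three pairwise intersections coincide with $J_1\cap J_2\cap J_3$. Pointwise, the claim is that \emph{no point lies in exactly two} of $J_1,J_2,J_3$. So I would fix a point $p\in(J_1\cap J_2)\setminus J_3$ and aim to contradict the defining property of $\{J_1,J_2,J_3\}$ by producing an $i$-space that is $\Gamma$-adjacent to $J_1$ and $J_2$ but not to $J_3$ (for type III: meeting $J_1,J_2$ in $k$-spaces and $J_3$ in dimension $<k$; for type II: meeting $J_1,J_2$ in dimension $\geq k$ and $J_3$ in dimension $<k$).

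The clean case is $\dim(J_1\cap J_2)\geq k$. Then I pick a $k$-space $K$ with $p\in K\subseteq J_1\cap J_2$ and feed $K_1=K_2=K$ into Observation~\ref{obs} (type II) or Lemma~\ref{lemextra} (type III): since $L=K$ has dimension $k\leq i$ and $L\cap J_a=K$ for $a=1,2$, the round-up property forces $\dim(K\cap J_3)\geq k$ (resp.\ $=k$), hence $K\subseteq J_3$ and in particular $p\in J_3$, a contradiction.

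In general I would first extract a dimension bound from the common neighbour guaranteed by the round-up property: an $i$-space $I_0$ adjacent to all three meets $J_1$ and $J_2$ in subspaces of dimension $\geq k$ lying inside $I_0$, whence $\dim(J_1\cap J_2)\geq\dim(I_0\cap J_1\cap J_2)\geq 2k-i$. Using this, I choose $k$-spaces $K_1\subseteq J_1$ and $K_2\subseteq J_2$ through $p$ whose intersection is a $\max\{0,2k-i\}$-space contained in $J_1\cap J_2$, so that $L:=\langle K_1,K_2\rangle$ has dimension $\min\{2k,i\}\leq i$ with $L\cap J_1=K_1$ and $L\cap J_2=K_2$. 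Applying Lemma~\ref{choose} in the ambient space (and, if necessary, padding $L$ up to an $i$-space by generic directions), I arrange that the resulting $i$-space meets $J_3$ only in the forced part coming from $(K_1\cap K_2)\cap J_3$; since $p\in K_1\cap K_2$ but $p\notin J_3$, that forced part has dimension at most $\max\{0,2k-i\}-1<k$, yielding the desired $i$-space adjacent to exactly $J_1$ and $J_2$. The case $k=-1$ is the degenerate variant of the same idea: take a point of $J_3$ outside $J_1\cup J_2$ and, via Lemma~\ref{choose}, an $i$-space through it disjoint from $J_1\cup J_2$.

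I expect the main obstacle to be precisely this last construction: guaranteeing the constructed $i$-space meets $J_3$ in dimension strictly below $k$ even when $J_1\cap J_3$ or $J_2\cap J_3$ is large, since large such intersections threaten to force extra common points with $J_3$. The resolution is that largeness of these intersections feeds back into largeness of $\dim(J_1\cap J_2)$, which returns one to the clean case above, so the two regimes dovetail; making this dovetailing precise, with the correct application of Lemma~\ref{choose} and using $i+j\leq n-1$ to secure enough room, is the delicate point. Throughout, the argument is uniform in the type, the type III version being obtained from the type II version by replacing ``$\geq k$'' with ``$=k$'' as dictated by the square-bracket convention.
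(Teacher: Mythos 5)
Your clean case $\dim(J_1\cap J_2)\geq k$ is exactly the paper's first case (take $K_1=K_2=K\ni p$ inside $J_1\cap J_2$ and apply Observation~\ref{obs}, resp.\ Lemma~\ref{lemextra}) and is fine. The gap is in your general case, at the step where you claim that Lemma~\ref{choose} lets you ``arrange that the resulting $i$-space meets $J_3$ only in the forced part coming from $(K_1\cap K_2)\cap J_3$''. That is not the forced part: every $i$-space $I\supseteq L=\langle K_1,K_2\rangle$ satisfies $I\cap J_3\supseteq L\cap J_3$, so the true forced part is $\langle K_1,K_2\rangle\cap J_3$, and nothing in your construction controls it. Worse, under the hypothesis you are arguing against --- that $\{J_1,J_2,J_3\}$ is a $\Gamma$-round-up triple --- Observation~\ref{obs} (type II) and Lemma~\ref{lemextra} (type III) assert precisely that $\dim(L\cap J_3)\geq k$ for every such $L$ with $\dim L\leq i$ (resp.\ with $L\cap J_a=K_a$). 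So the $i$-space you want to build cannot exist unless you first prove, by an independent geometric argument exploiting $p\notin J_3$ in the regime $\dim(J_1\cap J_2)<k$, that \emph{some} choice of $K_1,K_2$ yields $\dim(\langle K_1,K_2\rangle\cap J_3)<k$. That statement is the entire mathematical content of this case; invoking Lemma~\ref{choose} (which only governs how $L$ is extended, never shrinks $L\cap J_3$) begs the question, and your closing remark that the two regimes ``dovetail'' is exactly where the missing proof would have to go.

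For contrast, the paper's treatment of the case $\ell:=\dim(J_1\cap J_2)<k$ never attempts to construct an $i$-space avoiding $J_3$. It first normalizes so that $\ell$ is \emph{maximal} among the pairwise intersection dimensions (a normalization you never make, and which is essential), takes $k$-spaces $K_a\supseteq J_1\cap J_2$ in $J_a$, uses Observation~\ref{obs}/Lemma~\ref{lemextra} \emph{positively} to obtain a $k$-space $K_3\subseteq\langle K_1,K_2\rangle\cap J_3$, deduces $J_1\cap J_3=K_1\cap K_3$ of dimension exactly $\ell$ from the Grassmann identity and maximality, and then reaches a contradiction by replacing $K_1$ with a $k$-space $K_1'\supseteq J_1\cap J_2$ not containing $K_1\cap K_3$. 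Some argument of this substance must be inserted at your gap; as written, your proposal is not a proof. (Secondary, in principle fixable, omissions: the existence of $K_1,K_2$ with $\dim(K_1\cap K_2)=\max\{0,2k-i\}$ and $L\cap J_a=K_a$, and the verification of the dimension hypotheses of Lemma~\ref{choose} when extending $L$ to an $i$-space meeting $J_1,J_2$ exactly in $K_1,K_2$, are also left unjustified.)
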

  
\begin{proof}
Choose the subscripts such that $\dim(J_1\cap J_2)$ be maximal among $\{\dim(J_\ell\cap J_{\ell'}):\ell\in\{1,2,3\},\ell'\in\{1,2,3\}\setminus\{\ell\}\}$. For a contradiction, we may hence suppose that there exists a point $p\in (J_1\cap J_2)\setminus J_3$. There are two cases to distinguish.

\begin{enumerate}
\item Suppose $\dim(J_1\cap J_2)\geq k$. Consider a $k$-space $K$ in $J_1\cap J_2$ containing $p$. Observation~\ref{obs} implies that $K$ intersects $J_3$ in a subspace of dimension at least [exactly] $k$, which is absurd. 
\item Suppose $\dim(J_1\cap J_2)=\ell<k$.  Let $K_a$ be any $k$-space in $J_a$ containing $J_1\cap J_2$, $a=1,2$. By Observation~\ref{obs}, the subspace $\<K_1,K_2\>$ intersects $J_3$ in a subspace $K_3$ of dimension at least [exactly] $k$.  Since $\dim\<K_1,K_2\>=2k-\ell$ and $\<K_1,K_2\>$ contains $K_3$, a dimension argument implies $\dim K_1\cap K_3\geq \ell$, 
By maximality of $\ell$, we have equality. 
Moreover, this also implies $J_1\cap J_3=K_1\cap K_3$, and so $K_1\cap K_3\neq J_1\cap J_2$. Replacing $K_1$ by a $k$-space $K_1'$ in $J_1$ containing $J_1\cap J_2$ but not $K_1\cap K_3$, the same argument implies $J_1\cap J_3\subseteq K'_1$, a contradiction.
\end{enumerate} 
The lemma is proved.
\end{proof}

\begin{lemma}\label{lem5}
Every $\Gamma$-round-up triple $\{J_1,J_2,J_3\}$ is a regular round-up triple.
\end{lemma}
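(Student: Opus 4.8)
The plan is to reduce the statement, via Lemma~\ref{lem0}, to a condition on lines, and then to verify that condition using the round-up property in the guise of Observation~\ref{obs} and Lemma~\ref{lemextra}. If $\Gamma$ is of type I, the claim is immediate from the description of $\Gamma$-round-up triples recorded just after Lemma~\ref{lem0}: three collinear points, or three hyperplanes through a common $(n-2)$-space, are clearly regular. So assume $\Gamma$ is of type II or III, with $\Gamma\cong\Gamma^n_{i,j;\geq k}(\L)$ or $\Gamma\cong\Gamma^n_{i,j;k}(\L)$. By Lemma~\ref{lem1} the pairwise intersections coincide; write $D:=J_1\cap J_2=J_1\cap J_3=J_2\cap J_3$ and $d:=\dim D$, so that $J_1\cap J_2\cap J_3=D$ as well. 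Since the statement of Lemma~\ref{lem0} permits equal members, I would apply it with $J_4=J_3$; it then suffices to prove the \emph{line condition}: every line meeting two of $J_1,J_2,J_3$ also meets the third. This yields both $d=j-1$ and $\dim\<J_1,J_2,J_3\>=j+1$ at once, which is exactly regularity.

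For the line condition, I would first establish the auxiliary bound $d\geq k-1$, and then run the following construction. Let $\ell$ be a line meeting $J_1$ in a point $p_1$ and $J_2$ in a point $p_2$; since $D\subseteq J_3$, if $\ell$ meets $D$ we are already done, so assume $\ell\cap D=\emptyset$ (in particular $p_1,p_2\notin D$). Choose a $(k-1)$-space $E\subseteq D$ (this is where $d\geq k-1$ is used) and put $K_1:=\<E,p_1\>\subseteq J_1$ and $K_2:=\<E,p_2\>\subseteq J_2$, both $k$-spaces. As $K_1\cap K_2\subseteq J_1\cap J_2=D$ and $K_1\cap D=E$, one gets $K_1\cap K_2=E$, so $M:=\<K_1,K_2\>=\<E,\ell\>$ has dimension $k+1\leq i$, and a short check gives $M\cap J_1=K_1$, $M\cap J_2=K_2$. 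Hence $M$ is admissible for Observation~\ref{obs} [resp. Lemma~\ref{lemextra}], which forces $\dim(M\cap J_3)\geq k$ [resp. $=k$]. Since $E\subseteq M\cap J_3$ has dimension $k-1$, there is a point $q\in(M\cap J_3)\setminus E$. Projecting $M$ from $E$ collapses it onto a line, onto which $\ell$ maps bijectively (as $\ell\cap E=\emptyset$); the image of the $k$-space $\<E,q\>\subseteq J_3$ meets the image of $\ell$, so $\ell$ itself meets $\<E,q\>\subseteq J_3$. Thus $\ell$ meets $J_3$, as required.

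The main obstacle is the preliminary bound $d\geq k-1$, without which the $(k-1)$-space $E$ need not exist. The existence clause of the round-up definition already yields a first estimate: a common $i$-space neighbour $I$ meets $J_1$ and $J_2$ in subspaces of dimension $\geq k$ whose span lies in $I$, and since their intersection lies in $D$, a dimension count gives $d\geq 2k-i$. When $i=k+1$ this is precisely $d\geq k-1$ and nothing more is needed, but for $i\geq k+2$ one must still exclude the range $2k-i\leq d\leq k-2$. Here I would argue by contradiction: assuming $d\leq k-2$, I would use Lemma~\ref{choose} to place $k$-spaces $K_1\subseteq J_1$, $K_2\subseteq J_2$ meeting in a fixed $(2k-i)$-space of $D$ and to extend their span to an $i$-space $I$ in general position with respect to $J_3$, so that $I$ meets $J_1$ and $J_2$ in $k$-spaces but $J_3$ in dimension strictly less than $k$; such an $I$ is $\Gamma$-adjacent to exactly two of $J_1,J_2,J_3$, contradicting the round-up hypothesis. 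The bookkeeping in this last step---verifying that the general-position choices pin the intersections with $J_1,J_2$ to exactly $K_1,K_2$ while keeping $I\cap J_3$ small---is where the real care lies, and is exactly what Lemma~\ref{choose} is designed to supply.
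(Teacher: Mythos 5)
Your overall architecture is sound and, for the most part, coincides with the paper's: Lemma~\ref{lem1} supplies the common pairwise intersection $D$, the conclusion is drawn from Lemma~\ref{lem0} applied with $J_4=J_3$, and your verification of the line condition when $\dim D\geq k-1$ (pick $E\subseteq D$ of dimension $k-1$, form $K_a=\<E,p_a\>$, apply Observation~\ref{obs}, resp.\ Lemma~\ref{lemextra}, to the $(k+1)$-space $\<E,\ell\>$) is the paper's first case, down to the choice of the $k$-spaces; a small bonus of your arrangement is that you only need to exclude $\dim D\leq k-2$ rather than $\dim D\leq k-1$, the case $\dim D=k-1$ being swept up by Lemma~\ref{lem0} (it forces $j-1=k-1$, vacuous since $k<j$). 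The genuine divergence is how the low-dimensional case is killed. The paper does it by a short internal argument: two spans $\<K_1,K_2\>$ and $\<K_1',K_2\>$ are forced by Observation~\ref{obs}/Lemma~\ref{lemextra} to meet $J_3$ in \emph{distinct} $k$-spaces $K_3\neq K_3'$, and the Grassmann identity then makes $\<K_1,K_1'\>\cap\<K_3,K_3'\>$ too large to fit inside $J_1\cap J_3$. You instead propose to build an $i$-space adjacent to exactly two of $J_1,J_2,J_3$.

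That replacement is viable in principle (the violating $i$-space really does exist when $\dim D\leq k-2$), but as written it has a genuine gap, located exactly where you place your trust in Lemma~\ref{choose}. Lemma~\ref{choose} lets you extend a given subspace to an $i$-space avoiding finitely many prescribed subspaces; it does \emph{not} address the crucial point, namely that $K_1\subseteq J_1$, $K_2\subseteq J_2$ can be chosen so that the \emph{span} $\<K_1,K_2\>$ meets $J_3$ in dimension $<k$. This is not a generic-position triviality: the round-up hypothesis (via Observation~\ref{obs}/Lemma~\ref{lemextra}) asserts precisely that admissible spans meet $J_3$ in dimension at least (resp.\ exactly) $k$, so your contradiction needs an actual geometric proof that some admissible pair violates this when $\dim D\leq k-2$. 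Such a proof can be given --- take $K_1,K_2\supseteq D$, pass to $\Res(D)$ where the three quotients are pairwise disjoint, and show that $(\<K_1,J_2\>\cap J_3)/D$ projects injectively into $K_1/D$, hence has dimension at most $k-d-1$, with equality singling out a unique bad choice of $K_2$; only then does Lemma~\ref{choose} take over, for the final extension to an $i$-space with no new intersections --- but none of this is in your sketch, and it is the heart of the matter. A second, smaller, defect: your suggestion to take $K_1\cap K_2$ equal to a $(2k-i)$-subspace of $D$ backfires for type III, since then $\<K_1,K_2\>\cap J_1=\<K_1,K_2\cap D\>$ can be strictly larger than $K_1$, so the resulting $i$-space need not be adjacent to $J_1$ at all when adjacency means intersecting in dimension exactly $k$; taking $K_1,K_2\supseteq D$ (so $\dim\<K_1,K_2\>=2k-\dim D\leq i$, using your bound from the common neighbour) avoids this. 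If you want the short route, the paper's Grassmann-identity argument disposes of the whole case $\dim D<k$ in a few lines and requires no construction of $i$-spaces whatsoever.
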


\begin{proof*}
By Lemma~\ref{lem1}, we know $J_1\cap J_2=J_3\cap J_1=J_2\cap J_3$. Define $\ell=\dim J_1\cap J_2$. 
Again, we distinguish two cases.
\begin{enumerate}
\item  Suppose $\ell\geq k$. 
Let $p_a\in J_a\setminus J_3$ be arbitrary, $a=1,2$. For $a=1,2$, choose a $k$-space $K_a$ containing $p_a$ and intersecting $J_1\cap J_2$ in a $(k-1)$-space, with the latter independent of $a$. Observation~\ref{obs} [Lemma~\ref{lemextra}] and the fact that $k<i$ imply that the $(k+1)$-space $\<K_1,K_2\>$ intersects $J_3$ in at least [precisely] a $k$-space $K_3$ intersecting $J_1\cap J_2$ in $K_1\cap K_2$, hence in precisely a $k$-space. Consequently, the line $p_1p_2$ intersects $K_3$, and hence $J_3$, in a point. Lemma~\ref{lem0} shows the assertion.  
\item Suppose $\ell<k$. We claim this situation cannot occur. Indeed, let $K_a$ be a $k$-space through $J_1\cap J_2$ inside $J_a$, $a=1,2$. Observation~\ref{obs} [Lemma~\ref{choose}] and Lemma~\ref{lem1} together imply that $\<K_1,K_2\>\cap J_3=:K_3$ is $k$-dimensional. Now let $K_1'$ be a $k$-space in $J_1$ through $J_1\cap J_2$ such that $\<K_1,K'_1\>$ is $(k+1)$-dimensional (this is possible since $k<j$). Then $\<K_1',K_2\>\cap J_3=:K_3'$ is again $k$-dimensional. Since $\<K_2,K_3\>=\<K_1,K_2\>$ and $K_1'\not\subseteq\<K_1,K_2\>$, we conclude $K_3'\neq K_3$. Now $\dim\<K_1,K_1',K_2\>=2k-\ell+1$ and $\<K_3,K_3'\>\subseteq\<K_1,K_1',K_2\>$. A dimension argument (the Grassmann identity) implies now that $\<K_1,K_1'\>\cap\<K_3,K_3'\>$ has dimension at least $(k+1)+(k+1)-(2k-\ell+1)=\ell+1$. Since $\<K_1,K_1'\>\cap\<K_3,K_3'\>$ is contained in $J_1\cap J_3$, this is a contradiction.
Our claim is proved.\qed
\end{enumerate}
\end{proof*}

\begin{lemma}\label{lemma5b}
A regular round-up triple is a $\Gamma$-round-up triple if and only if $\Gamma$ has type \emph{I} or \emph{II}.
\end{lemma}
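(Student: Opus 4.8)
The plan is to reduce the whole statement to a single dimension computation and then read off the adjacencies type by type. Since the collineation group of $\PG(n,\L)$ acts transitively on the regular round-up triples of $j$-spaces, I may fix one such triple $\{J_1,J_2,J_3\}$: by definition the three $J_a$ share a common $(j-1)$-space $D$ (so they also pairwise meet exactly in $D$) and span a common $(j+1)$-space $W$. Let $\pi_D$ be the projection from $D$ onto $\Res(D)$, so that $\pi_D(X)=\<X,D\>/D$. Under $\pi_D$ the three $J_a$ become three \emph{distinct collinear} points $P_1,P_2,P_3$ on the line $W/D$.

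The key step, which I would isolate as a short claim, is the following. For any $i$-space $I$, put $d=\dim(I\cap D)$. Then $\dim(I\cap J_a)=d+1$ if $P_a\in\pi_D(I)$, and $\dim(I\cap J_a)=d$ otherwise. Indeed, every point of $J_a\setminus D$ projects to the single point $P_a$, so $I\cap J_a$ is exactly the $\pi_D|_I$-preimage of $P_a$, which is the join of the center $I\cap D$ with at most one further point of $I$. Combined with the collinearity of $P_1,P_2,P_3$ — whence $\pi_D(I)$ contains either none, exactly one, or all three of them — this reduces each type to a tiny case analysis in $d$.

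For type II, where $I$ is adjacent to $J_a$ exactly when $\dim(I\cap J_a)\geq k$: if $d\geq k$ then $I$ is adjacent to all three; if $d=k-1$ then $I$ is adjacent to precisely those $J_a$ with $P_a\in\pi_D(I)$, hence to $0$, $1$ or $3$ of them; if $d\leq k-2$ to none. Thus no $i$-space is adjacent to exactly two, and an $i$-space through any $k$-space of $D$ is adjacent to all three, so the triple is a $\Gamma$-round-up triple. For type I I would not repeat the argument but simply invoke the observation recorded earlier, that its $\Gamma$-round-up triples are precisely the triples of collinear points, respectively of hyperplanes sharing an $(n-2)$-space — which are exactly the regular round-up triples.

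For type III, where adjacency means $\dim(I\cap J_a)=k$, the same bookkeeping shows that when $d=k$ the subspace $I$ is adjacent to precisely the $J_a$ with $P_a\notin\pi_D(I)$, a count of $3$, $2$ or $0$. Since $k<\min\{i,j\}\leq i$, I can take $\pi_D(I)$ to be a subspace of $\Res(D)$ of dimension $i-k-1\geq0$ passing through exactly one $P_a$ and meeting $W/D$ only there; this yields an $i$-space adjacent to exactly two of $J_1,J_2,J_3$, so the triple is not a $\Gamma$-round-up triple. I expect the only real obstacle to be this last existence step: one must check that such a subspace through a single $P_a$ exists in $\Res(D)$ and, more delicately, that it lifts to a genuine $i$-space meeting $D$ in a prescribed $k$-space. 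Both follow from the hypothesis $i+j\leq n-1$ (which leaves enough room in $\Res(D)$) together with Lemma~\ref{choose}, via a routine dimension count lifting $(I\cap D,\pi_D(I))$ back to $I$.
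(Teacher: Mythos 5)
Your proof is correct and, at its core, is the same as the paper's: for type III you produce an $i$-space meeting the common $(j-1)$-space $D$ in a $k$-space and meeting exactly one $J_a$ in a $(k+1)$-space (hence adjacent to exactly two of them), which is precisely the witness the paper constructs directly, while types I and II form the easy direction. Your projection-from-$D$ bookkeeping is a uniform repackaging of that construction, whose only extra benefit is making the $0/1/3$ adjacency count for type II explicit where the paper simply calls it clear.
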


\begin{proof}
The assertion is clear for graphs of type I and II. Now let $\Gamma\cong\Gamma^n_{i,j;k}(\L)$ be of type III. Let $\{J_1,J_2,J_3\}$ be a regular round-up triple of $j$-spaces. Since $k<j$, we can select a $k$-space $K$ in $J_1\cap J_2$. Since $k<i$, we can select an $i$-space $I$ intersecting $J_1\cap J_2$ in $K$, and intersecting $\<J_1,J_2\>$ in a $(k+1)$-space contained in $J_1$ (hence not in $J_1\cap J_2$). Then $I$ is adjacent with both $J_2$ and $J_3$, but not with $J_1$, a contradiction.  Note that this argument also works for $k=-1$.
\end{proof}

So we can distinguish graphs of type I and II from those of type III. 

We now make two further simplifications. 

First we split off the case $k=-1$ in type III graphs and forget about them, since their bipartite complements belong to type II.

\begin{lemma}\label{k=-1}
Let $\Gamma=\Gamma^n_{i,j;k}(\L)$ be a graph of type \emph{III}. Then $k=-1$ if and only if the bipartite complement $\Gamma^*$ of $\Gamma$ admits $\Gamma^*$-round-up triples.  
\end{lemma}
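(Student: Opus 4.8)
The plan is to identify which two graphs $\Gamma^*$ can be, and then prove the two implications separately; in each direction the whole argument hinges on producing a single well-placed $i$-space. First I would record the structure of $\Gamma^*$. Since $\Gamma=\Gamma^n_{i,j;k}(\L)$ joins an $i$-space $I$ to a $j$-space $J$ exactly when $I\cap J$ is a $k$-space, its bipartite complement $\Gamma^*$ joins them exactly when $\dim(I\cap J)\neq k$. When $k=-1$ this reads $\dim(I\cap J)\geq 0$, so $\Gamma^*=\Gamma^n_{i,j;\geq 0}(\L)$, which is a graph of type II as soon as $\min\{i,j\}\geq 1$ (and, when $\min\{i,j\}=0$, is just containment of points in $j$-spaces).

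For the implication $k=-1\Rightarrow\Gamma^*$ has round-up triples, I would simply exhibit one. Take three $j$-spaces $J_1,J_2,J_3$ forming a regular round-up triple (a common $(j-1)$-space $D$ and a common $(j+1)$-space); such triples exist since $0\leq j\leq n-1$ and every line carries at least three points as $|\L|\geq 2$. If $\Gamma^*$ is of type II, Lemma~\ref{lemma5b} says at once that this triple is a $\Gamma^*$-round-up triple. In the residual case $\min\{i,j\}=0$, where $\Gamma^*$ is containment and falls outside the type I/II/III trichotomy, I would check the two round-up conditions by hand: no $i$-space meets exactly two members nontrivially, because by Lemma~\ref{lem0} any line joining points of two members also meets the third (which is precisely the condition of Observation~\ref{obs} with parameter $0$), while any $i$-space through a point of $D$ meets all three, giving the second condition.

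For the converse I argue the contrapositive: if $k\geq 0$ then $\Gamma^*$ has no round-up triples. It suffices to defeat the first round-up condition for every triple $\{J_1,J_2,J_3\}$ of $j$-spaces, that is, to build an $i$-space $\Gamma^*$-adjacent to exactly two of them. Since $\Gamma^*$-adjacency means $\dim\neq k$, I want an $i$-space $I$ meeting one member, say $J_1$, in a $k$-space and the other two in dimension $<k$. I would pick a $k$-space $K\subseteq J_1$ in general position, so that $\dim(K\cap J_2),\dim(K\cap J_3)<k$ (possible because $k<j$ and $J_1\cap J_2,J_1\cap J_3$ are proper in $J_1$), and then extend $K$ to an $i$-space $I$ with $I\cap J_1=K$ and $I\cap J_a=K\cap J_a$ for $a=2,3$. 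Passing to $\Res(K)$, this amounts to finding an $(i-k-1)$-space disjoint from the three projected subspaces $J_1/K$, $\<J_2,K\>/K$, $\<J_3,K\>/K$; the hypothesis $i+j\leq n-1$ supplies the room needed for the technique behind Lemma~\ref{choose}. The resulting $I$ satisfies $\dim(I\cap J_1)=k$ and $\dim(I\cap J_2),\dim(I\cap J_3)<k$, so $I$ is $\Gamma^*$-adjacent to $J_2,J_3$ only, and no round-up triple can exist.

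The step I expect to be the main obstacle is this last construction: the projected subspaces $\<J_a,K\>/K$ can have dimension up to $j$, so Lemma~\ref{choose} does not apply verbatim, and one must either choose $K$ so as to keep $\dim(K\cap J_a)$ as large as possible (thereby shrinking the projections) or iterate the union-avoidance argument underlying Lemma~\ref{choose}; checking that $i+j\leq n-1$ always leaves enough space is the one place demanding genuine care. The degenerate case $\min\{i,j\}=0$ in the forward direction is a secondary wrinkle, resolved by the direct verification indicated above.
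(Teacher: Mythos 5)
Your proof is correct and follows essentially the same route as the paper's: the forward direction identifies $\Gamma^*$ with $\Gamma^n_{i,j;\geq 0}(\L)$ and exhibits regular round-up triples, and the converse defeats every candidate triple by constructing an $i$-space meeting exactly one member in a $k$-space and the other two in smaller dimension, via an $(i-k-1)$-space in $\Res(K)$ avoiding the three projected subspaces. The technical point you flag --- that $\langle J_a,K\rangle/K$ can have dimension $j$, so that when $i+j=n-1$ Lemma~\ref{choose} must be supplemented by iterating its union-avoidance argument --- is precisely the step the paper passes over with ``it is easy to see,'' and your proposed fix is the right one.
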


\begin{proof}
Clearly the bipartite complement of $\Gamma^n_{i,j;0}(\L)$ is $\Gamma^n_{i,j;\geq 0}(\L)$ and this admits $\Gamma^n_{i,j;\geq 0}(\L)$-round-up triples, namely, regular round-up triples. 

Now suppose $k\geq 0$. A $\Gamma^*$-round-up triple is, without loss, a triple $\{J_1,J_2,J_3\}$ of $j$-spaces such that no $i$-space intersects exactly one of $J_1,J_2,J_3$ in a $k$-space. We show that such a triple does not exist. Indeed, suppose for a contradiction that $\{J_1,J_2,J_3\}$ is such a triple. Then we select a point $p\in J_3\setminus(J_1\cup J_2)$ (this is possible since $J_3$ cannot be covered by the union of $J_1$ and $J_2$), and a $k$-space $K$ in $J_3$ containing $p$. It is easy to see that we can find an $i$-space $I$ intersecting $J_3$ in $K$ and intersecting $J_a$, $a=1,2$, in $K\cap J_a$. Indeed, it suffices to find in  $\Res(K)$ an $(i-k-1)$ space avoiding the three subspaces $\<J_1,K\>/K$, $\<J_2,K\>/K$ and $J_3/K$, which have dimension at most $j$, at most $j$, exactly $j-k-1$, respectively. 
\end{proof}

Hence, from now on, we assume that type III graphs have $k\neq -1$.

Now we distinguish type I from type II graphs. So let $\Gamma$ be a graph of type I or type II. Notice that it suffices to recognize when $i,j\in\{0,n-1\}$, since for type II graphs neither $i$ nor $j$ belongs to $\{0,n-1\}$ (since $0\leq k<\min\{i,j\}$). 

We define a new graph $\Gamma_1$ with vertex set, without loss, the set of $j$-spaces, with two $j$-spaces adjacent if they are contained in a $\Gamma$-round-up triple. This is the $j$-Grassmann graph of $\PG(n,\L)$, and so two $j$-spaces are contained in a unique maximal clique if and only if $j\in\{0,n-1\}$. Hence, if in $\Gamma_1$ every pair of distinct vertices is contained in a unique maximal clique, then $\Gamma$ has type I, and otherwise it has type II. If it has type I, then we can apply Proposition~\ref{propk=i}. Hence from now on, we only have to deal with graphs of type II and III.

We first handle the case of graphs of type II, thus finishing the proof of Main Result~\ref{main2}.

\subsection{Proof of Main Result~\ref{main2}}

The graph $\Gamma_1$ above determines $n,j,\L$, and likewise we can determine $i$. It does not directly determine $k$, although this can be proved indirectly. Nevertheless, we prefer to give a rather elegant algorithm to determine the parameters of $\Gamma$.

Remember that $\Gamma\cong\Gamma^n_{i,j;\geq k}(\L)$, with $0\leq k<\min\{i,j\}\leq n-\max\{i,j\}-1$. 

We define another graph $\Gamma_2$ with vertex set the set of $j$-spaces together with the set of maximal cliques of $\Gamma_1$. A $j$-space is adjacent to a maximal clique if it is contained in it, and two maximal cliques are adjacent if their intersection contains at least two $\Gamma_1$-adjacent $j$-spaces. This way, the tripartite graph isomorphic to $\Gamma^n_{[j-1,j+1]}(\L)$ of $(j-1)$-, $j$- and $(j+1)$-spaces arises with natural adjacency (namely, incidence). But we do not know yet from our data which classes precisely correspond to $(j-1)$-spaces, and which to $(j+1)$-spaces. 

We note that, since $k<i$, every $i$-space is $\Gamma$-adjacent to all members of maximal cliques of both kinds.
We define four different bipartite graphs $\Gamma^\epsilon_X$, with $\epsilon\in\{-1,+1\}$ and $X\in\{\exists,\forall\}$, with one class of vertices each time the set of $i$-spaces.  The other class of vertices of the two graphs $\Gamma^{-1}_X$, $X\in\{\exists,\forall\}$, is one class of $\Gamma_2$ distinct from the class corresponding to the $j$-spaces, and for the other two graphs we take the other class of $\Gamma_2$ distinct from the class of $j$-spaces. An $i$-space is  $\Gamma^\epsilon_\exists$-adjacent to a clique of $j$-spaces if it is adjacent to some member of the clique, $\epsilon\in\{-1,+1\}$, and an $i$-space is  $\Gamma^\epsilon_\forall$-adjacent to a clique of $j$-spaces if it is adjacent to all members of the clique, $\epsilon\in\{-1,+1\}$.  To fix the ideas, we can choose the notation in such a way that the superscript $-1$ corresponds to cliques of $j$-spaces containing a fixed $(j-1)$-space, and the superscript $+1$ to cliques of $j$-spaces contained in a fixed $(j+1)$-space (but this can for the moment not directly be derived from the graph $\Gamma$ and the information we have up to now). 

One easily checks the following isomorphisms:
 $$\begin{array}{ll}\Gamma^-_\exists\cong \Gamma^n_{i,j-1;\geq k-1}(\L), \hspace{1cm} & \Gamma^+_\exists\cong\Gamma^n_{i,j+1,\geq k}(\L),\\ [3mm] \Gamma^-_\forall\cong\Gamma^n_{i,j-1,\geq k}(\L),& \Gamma^+_\forall\cong\Gamma^n_{i,j+1,\geq k+1}(\L).\end{array}$$
Notice that we can repeat each of these four constructions for each of the four cases. But, starting from a minus graph, the appropriate plus graph brings us back to the original graph, and similarly for starting from a plus graph. Hence we can define with self-explaining notation the following graphs:

 $$\begin{array}{ll}\Gamma^{-m}_\exists\cong \Gamma^n_{i,j-m;\geq k-m}(\L), \hspace{1cm} &\Gamma^{+m}_\exists\cong\Gamma^n_{i,j+m,\geq k}(\L),\\ [3mm] \Gamma^{-m}_\forall\cong\Gamma^n_{i,j-m,\geq k}(\L),& \Gamma^{+m}_\forall\cong\Gamma^n_{i,j+m,\geq k+m}(\L),\end{array}$$

for all natural $m$ for which the right hand side makes sense. It is clear that the smallest $m$ for which  $\Gamma^{-m}_\exists$ becomes complete bipartite is $m=k+1$. Also, the smallest $m$ for which $\Gamma^{+m}_\exists$ becomes complete bipartite is $m=n+k-i-j$. Since $k+1\leq n+k-i-j$, we can deduce $k$ and $n-i-j$ from this information. 

It is also clear that the smallest $m$ for which $\Gamma^{-m}_\forall$ becomes an empty graph is $j-k+1$, and the smallest $m$ for which $\Gamma^{+m}_\forall$ becomes an empty graph is $i-k+1$. Form this, we deduce $j$ and $i$.

If we would have started with looking for $\Gamma$-round-up triples of $i$-spaces, then we would also have found first $k$, then $n-i-j-1$, and then $i-k+1$ and $j-k+1$, which also determines all parameters uniquely. 

Hence, in both cases, our method reveals $i,j,k$ and $n$ and Main Result~\ref{main2} follows from this and the fact that $\Gamma^n_{[j-1,j+1]}(\L)$ determines $\PG(n,\L)$ completely. 


We mention the following special case of Main Result~\ref{main2}. Suppose $-1\leq k\leq j\leq n-1$, $j\neq -1$, and let $\Gamma^n_{j;\geq k}(\L)$ be the graph of $j$-spaces of $\PG(n,\L)$ where two $j$-spaces are adjacent if their intersection contains a $k$-space, and let $\Gamma^n_{j;k}(\L)$ be the graph of $j$-spaces of $\PG(n,\L)$ where two $j$-spaces are adjacent if they intersect in a $k$-space.

\begin{cor}\label{th1}
Let $\L$ and $\L'$ be two skew fields, and let $-1\leq k\leq j\leq n-1$, $-1\leq k'\leq j'\leq n'-1$ be integers, with $-1\notin\{j,j'\}$ and $n\geq 2$. 
\begin{itemize}
\item[$(i)$]  If $n+k\leq 2j$ or $k=-1$, then $\Gamma^n_{j;\geq k}(\L)$ is a complete graph.
\item[$(ii)$] If $j=k$, then $\Gamma^n_{j;\geq k}(\L)$ is an empty graph.
\item[$(iii)$] If $\L\cong \L'$, $n=n'$, $j'=n-1-j$ and $k'=n-1+k-2j$, then $\Gamma^n_{j;\geq k}(\L)\cong \Gamma^{n'}_{j';\geq k'}(\L')$.
\item[$(iv)$] If $k=2j+1-n$, then $\Gamma^n_{j;\geq k}(\L)$ is the complement of $\Gamma^n_{j;k-1}(\L)$.
\item[$(v)$] If $2j\leq n-1$, $-1< k<j$ and $2j'\leq n'-1$, then $\Gamma^n_{j;\geq k}(\L)\cong\Gamma^{n'}_{j';\geq k'}(\L')$ if and only if $\L\cong \L'$ and $(j,k,n)=(j',k',n')$. In this case every graph isomorphism is induced by a semi-linear bijection from $V_n(\L)$ to $V_{n'}(\L')$, or possibly to $V_{n'}^*(\L')$ (the dual of $V_{n'}(\L')$) if $2j=n-1$ and $\L'\cong (\L')^*$ (the latter is the opposite skew field). 
\end{itemize} 
\end{cor}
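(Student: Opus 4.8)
The plan is to read off Corollary~\ref{th1} from Main Result~\ref{main2} by specializing $i=j$ and passing to the extended bipartite double, as announced in the introduction. Parts $(i)$--$(iv)$ are direct consequences of Main Result~\ref{main2}$(i)$--$(iii)$ together with elementary dimension counts, and I would dispatch them first. For $(i)$, two distinct $j$-spaces $J_1,J_2$ satisfy $\dim(J_1\cap J_2)\geq 2j-n\geq k$ whenever $n+k\leq 2j$ (and trivially when $k=-1$), so the graph is complete. For $(ii)$, $\dim(J_1\cap J_2)\geq j=k$ forces $J_1=J_2$, so there are no edges. Part $(iii)$ is the restriction to $i=j$ of the duality in Main Result~\ref{main2}$(iii)$: a correlation $\delta$ sends $j$-spaces to $(n-1-j)$-spaces and, via $\dim(J_1^\delta\cap J_2^\delta)=n-1-\dim\langle J_1,J_2\rangle=n-1-2j+\dim(J_1\cap J_2)$, turns the condition $\dim(J_1\cap J_2)\geq k$ into $\dim(J_1^\delta\cap J_2^\delta)\geq n-1+k-2j=k'$. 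For $(iv)$, since $\dim(J_1\cap J_2)\geq 2j-n=k-1$ always holds when $k=2j+1-n$, the non-edges of $\Gamma^n_{j;\geq k}(\L)$ are exactly the pairs with $\dim(J_1\cap J_2)=k-1$, which is the adjacency of $\Gamma^n_{j;k-1}(\L)$.

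For the main assertion $(v)$, the key structural fact is that $\Gamma^n_{j,j;\geq k}(\L)$ is the extended bipartite double of $\Gamma^n_{j;\geq k}(\L)$: take two copies $A,B$ of the set of $j$-spaces and join $x\in A$ to $y\in B$ precisely when $\dim(x\cap y)\geq k$; since $k\leq j$, the diagonal pairs $x=y$ are automatically edges, which is the ``extension''. I would first check that any graph isomorphism $\phi\colon\Gamma^n_{j;\geq k}(\L)\to\Gamma^{n'}_{j';\geq k'}(\L')$ lifts to the bipart-preserving map $\hat\phi(x,\epsilon)=(\phi(x),\epsilon)$, $\epsilon\in\{0,1\}$, and that $\hat\phi$ is a graph isomorphism $\Gamma^n_{j,j;\geq k}(\L)\to\Gamma^{n'}_{j',j';\geq k'}(\L')$. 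This is immediate, because the relation $\dim(x\cap y)\geq k$ is reflexive and symmetric, so $\hat\phi$ respects both the diagonal edges and the off-diagonal ones.

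Next I would apply Main Result~\ref{main2}$(iv)$ to $\hat\phi$; its hypotheses $2j\leq n-1$, $-1\neq k<j$, $2j'\leq n'-1$ are exactly those of $(v)$. This gives $\L\cong\L'$ and $(j,k,n)=(j',k',n')$, and shows that $\hat\phi$ is induced by a semi-linear bijection $g$ from $V_n(\L)$ to $V_{n'}(\L')$, or to $V_{n'}^*(\L')$ when $2j=n-1$ and $\L'\cong(\L')^*$. The converse ``if'' direction is trivial: a field isomorphism yields a semi-linear bijection whose induced action on $j$-spaces is the required graph isomorphism. It remains to descend $g$ back to $\phi$. For this I would use the reconstruction underlying Main Result~\ref{main2}: both biparts $A$ and $B$ are identified with the $j$-spaces of one and the same recovered $\PG(n,\L)$, with each $B$-vertex $y$ pinned down by its $A$-neighborhood $\{x:\dim(x\cap y)\geq k\}$ (no two $j$-spaces share such a neighborhood under the standing hypotheses). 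Hence $\hat\phi(x,0)=(g(x),0)$ and $\hat\phi(x,1)=(g(x),1)$ with one and the same induced collineation-or-duality $g$, so $\phi(x)=g(x)$ for every $j$-space $x$, i.e.\ $\phi$ is induced by $g$. The dual clause survives because, as a short computation shows, when $2j=n-1$ a correlation $\delta$ preserves $\dim(J_1\cap J_2)$ and therefore induces a bipart-preserving automorphism of the double, so a dual $g$ is indeed compatible with the bipart-preserving $\hat\phi$ exactly in the stated range.

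The main obstacle is precisely this ``uniformity on both biparts'' step: a priori a bipart-preserving automorphism of the double might act by two different semi-linear maps on $A$ and on $B$, and the descent $\phi=g$ is legitimate only after ruling this out. I expect to settle it by the twin-freeness argument above (equivalently, by invoking that Main Result~\ref{main2} recovers a single projective space on which $g$ acts), rather than through any fresh computation; everything else is a routine transfer between the non-bipartite graph and its extended bipartite double.
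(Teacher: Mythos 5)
Your proposal is correct and follows essentially the same route as the paper: the paper's proof simply observes that $\overline{2}\Gamma^n_{j;\geq k}(\L)\cong\Gamma^n_{j,j;\geq k}(\L)$ (and $2\Gamma^n_{j;k}(\L)\cong\Gamma^n_{j,j;k}(\L)$ for $(iv)$) and invokes Main Result~\ref{main2}, which is exactly your passage to the extended bipartite double. The only difference is that you spell out the lift $\hat\phi$, the descent back to $\phi$, and the single-$g$ uniformity across the two biparts, details the paper leaves as ``follows easily''.
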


\begin{proof}
Recall that the (extended) bipartite double $2\Gamma$ ($\overline{2}\Gamma$) of a given graph $\Gamma$ is obtained by taking two copies of the vertex set $\Gamma$ (without the edges) and defining a vertex of one copy to be incident with a vertex of the other copy if the corresponding vertices of $\Gamma$ are (equal or) adjacent in $\Gamma$.

Then the corollary follows easily from Main Result~\ref{main2} and the observation that $\overline{2}\Gamma^n_{j;\geq k}(\L)\cong\Gamma^n_{j,j;\geq k}(\L)$ and $2\Gamma^n_{j;k}(\L)\cong\Gamma^n_{j,j; k}(\L)$ (the latter is needed for $(iv)$).
\end{proof}

We now go on with graphs of type III. 

\subsection{Proof of Main Result~\ref{main1}}
Now we prove Main Result~\ref{main1}. So $\Gamma\cong\Gamma^{n}_{i,j;k}(\L)$, with $i+j\leq n-1$ and $0\leq k<\min\{i,j\}$. Remember that $k\neq -1$ because of Lemma~\ref{k=-1}.  

 
 
 
 
Our proof is again symmetric in $i$ and $j$. Hence throughout one can interchange $i$ and $j$.  
The method used in the previous subsection with round-up triples does not work because of Lemmas~\ref{lem5} and~\ref{lemma5b}. But we now use the idea of the $\Gamma$-round-up quadruples and show that every such quadruple is a regular round-up quadruple. This proof is considerably more involved, although the main structure is kept, and some results can be proved using a slight generalization. 

It is easy to show that every regular round-up quadruple is a $\Gamma$-round-up quadruple. Hence we show the converse.

We start with a lemma which can be seen as the counterpart to Observation~\ref{obs}. 

\begin{lemma}\label{lem2}
Let $\{J_1,J_2,J_3,J_4\}$ be a $\Gamma$-round-up quadruple. Then the following holds for arbitrary $\ell_1,\ell_2,\ell_3,\ell_4$ with $\{\ell_1,\ell_2,\ell_3,\ell_4\}=\{1,2,3,4\}$. Whenever $K_1$ and $K_2$ are $k$-spaces in $J_{\ell_1}$ and $J_{\ell_2}$, respectively, with $\<K_1,K_2\>\cap J_{\ell_a}=K_a$, for $a\in\{1,2\}$, and with $\dim\<K_1,K_2\>\leq i$, then the subspace $\<K_1,K_2\>$ intersects at least one of $J_{\ell_3},J_{\ell_4}$ in precisely a $k$-space.
\end{lemma}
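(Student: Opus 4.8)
The plan is to prove the statement directly rather than by contradiction: I enlarge $L:=\langle K_1,K_2\rangle$ to a whole $i$-space placed in general position with respect to the four $j$-spaces, and then let the defining property of a $\Gamma$-round-up quadruple deliver the conclusion. Throughout write $\lambda:=\dim L\le i$ and $m_a:=\dim(L\cap J_{\ell_a})$, so that $m_1=m_2=k$ by hypothesis (recall that adjacency in a type III graph means intersection of dimension exactly $k$).

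If $\lambda=i$ there is nothing to build: $L$ is itself an $i$-space meeting $J_{\ell_1}$ and $J_{\ell_2}$ in the $k$-spaces $K_1$ and $K_2$, hence $L$ is $\Gamma$-adjacent to two members of the quadruple; since $\{J_1,J_2,J_3,J_4\}$ is a $\Gamma$-round-up quadruple, $L$ is then adjacent to at least three of them, and in particular meets one of $J_{\ell_3},J_{\ell_4}$ in precisely a $k$-space.

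So assume $\lambda<i$. The core step is to exhibit an $i$-space $I\supseteq L$ with $I\cap J_{\ell_a}=L\cap J_{\ell_a}$ for all $a\in\{1,2,3,4\}$. In the residue $\Res(L)\cong\PG(n-\lambda-1,\L)$ this asks for a subspace of dimension $i-\lambda-1$ disjoint from the four images $\langle J_{\ell_a},L\rangle/L$, which have respective dimensions $j-m_a-1$. Granting such an $I$, it meets $J_{\ell_1},J_{\ell_2}$ in $K_1,K_2$ and is therefore $\Gamma$-adjacent to two members of the quadruple; the round-up property upgrades this to adjacency with at least three, so $\dim(I\cap J_{\ell_{a_0}})=k$ for some $a_0\in\{3,4\}$. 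As $I\cap J_{\ell_{a_0}}=L\cap J_{\ell_{a_0}}$ by construction, $L$ meets $J_{\ell_{a_0}}$ in precisely a $k$-space, which is exactly the assertion.

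The only real work, and the step I expect to be the main obstacle, is securing this disjoint $(i-\lambda-1)$-space in $\Res(L)$; this is where the hypotheses $i+j\le n-1$ and $|\L|>2$ are spent. The bound $i+j\le n-1$ yields the pairwise room $(i-\lambda-1)+(j-m_a-1)<n-\lambda-1$ that lets one miss any single image, and Lemma~\ref{choose} is tailored precisely to promote such room to a subspace avoiding the entire union of at most four subspaces. The delicate point is that when $L$ meets $J_{\ell_3}$ or $J_{\ell_4}$ in very small dimension the corresponding images can be nearly maximal (dimension up to $j$, occurring exactly when $i+j=n-1$), so two of them fall outside the dimension pattern $b,b-1,b-1,b-2$ assumed in Lemma~\ref{choose}. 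In that borderline configuration I would instead argue by the point count underlying Lemma~\ref{choose}, which here reduces to the inequality $q^{\,i-\lambda}>2$: this holds for every $q\ge 3$ but fails for $q=2$ when $\dim L=i-1$, and this is exactly the reason the field of order two has to be excluded for these graphs. Granting the count, the extension $I$ exists and the lemma follows.
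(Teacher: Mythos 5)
Your proof is correct in substance, and it is organized genuinely differently from the paper's. The paper argues by contradiction: assuming neither $\<K_1,K_2\>\cap J_{\ell_3}$ nor $\<K_1,K_2\>\cap J_{\ell_4}$ is $k$-dimensional, it extends $\<K_1,K_2\>$ to an $i$-space $I$ meeting $J_{\ell_1},J_{\ell_2}$ exactly in $K_1,K_2$ (only this much is needed from Lemma~\ref{choose}), lets the round-up property force $\dim(I\cap J_{\ell_3})=k$, and then, through a three-case analysis on $\dim(I\cap J_{\ell_4})$, cuts $I$ down to a suitable hyperplane $I'\supseteq\<K_1,K_2\>$ and re-extends it in $\Res(I')$ to an $i$-space meeting $J_{\ell_1},J_{\ell_2}$ in $k$-spaces, $J_{\ell_3}$ in a $(k-1)$-space and $J_{\ell_4}$ in dimension $\neq k$ --- a vertex adjacent to exactly two members of the quadruple, the desired contradiction. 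Your route --- extend $L=\<K_1,K_2\>$ in one stroke to an $i$-space in general position with respect to all four $J_{\ell_a}$ and invoke the round-up property once --- eliminates both the contradiction framework and the case analysis, and it is sound: your pairwise dimension bounds are right, the point-by-point extension succeeds for $|\L|\geq 3$ (and trivially for infinite $\L$), and both proofs ultimately lean on the same fact outside the literal scope of Lemma~\ref{choose}, namely that for $|\L|>2$ a projective space is never the union of two hyperplanes and two subspaces of codimension at least $2$ (the paper invokes exactly this, parenthetically, in its third case and final step). Yours is the leaner organization of the same underlying geometry; the cost is that the avoidance lemma you need must be proved as a variant of Lemma~\ref{choose}, not quoted.

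The detail that needs correcting is your closing count: it does not reduce to $q^{i-\lambda}>2$ (write $q=|\L|$). Running the extension one point at a time, the four image dimensions $j-m_a-1$ stay constant while the ambient residue shrinks, so the binding step is always the last one: a single-point extension inside a residue isomorphic to $\PG(n-i,\L)$, whatever $\dim L$ was. There one must avoid two subspaces of dimension at most $n-i-1$ (the images of $J_{\ell_3},J_{\ell_4}$, hyperplanes precisely when $m_a=-1$ and $i+j=n-1$) together with two subspaces of codimension at least $k+2\geq 2$ (the images of $J_{\ell_1},J_{\ell_2}$). A sufficient inequality is that the covered fraction $2/q+2/q^2$ be less than $1$, which holds for every $q\geq 3$ and is independent of $i-\lambda$. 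Conversely, the $q=2$ obstruction lives at this last step no matter how large $i-\lambda$ is: two hyperplanes and two codimension-$2$ subspaces can cover $\PG(M,2)$ entirely (for instance $x_0=0$, $x_1=0$, $\{x_0=x_1=x_2\}$ and $\{x_0=x_1,\ x_2=0\}$), so the failure is not confined to $\dim L=i-1$. This slip is harmless for the lemma only because the graphs $\Gamma^n_{i,j;k}(2)$ are excluded from its setting from the outset.
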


\begin{proof}
Assume for a contradiction that the assertion is false. By renumbering if necessary we may then assume that $K_1$ and $K_2$ are $k$-spaces contained in $J_1,J_2$, respectively, and that none of $\<K_1,K_2\>\cap J_3$ and $\<K_1,K_2\>\cap J_4$ is $k$-dimensional.

Our main goal is to construct an $(i-1)$-space $I'$ containing $\<K_1,K_2\>$  and intersecting, say, $J_3$ in a $(k-1)$-space, while $\dim(I'\cap J_4)\neq k$. 

We consider an arbitrary $i$-space $I$ intersecting $J_1$ in $K_1$ and $J_2$ in $K_2$ (which exists by applying Lemma~\ref{choose} in $\Res(\<K_1,K_2\>)$). Since $\{J_1,J_2,J_3,J_4\}$ is a $\Gamma$-round-up quadruple, we know that $I$ intersects, say, $J_3$ in a $k$-space $K_3$. We will construct $I'$ as a hyperplane of $I$. Our construction depends on $\dim(I\cap J_4)$.

\textbf{First case: $\dim(I\cap J_4)\notin\{k,k+1\}$.} \\ This is an easy case. Indeed, first note that $K_3\not\subseteq\<K_1,K_2\>$ by our main assumption. We can thus consider a hyperplane $I'$ of $I$ containing $\<K_1,K_2\>$ and not containing $K_3$. 

\textbf{Second case: $I\cap J_4=K_4$ is $k$-dimensional.} \\ In this case we know that none of $K_3,K_4$ is contained in $\<K_1,K_2\>$, and hence $\<K_1,K_2\>$ is not a hyperplane of $I$, as otherwise $I'=\<K_1,K_2\>$ satisfies our needs. So we can first choose an $(i-2)$-space $I''$ through $\<K_1,K_2\>$ neither containing $K_3$, nor $K_4$. If $\<I'',K_3,K_4\>$ has dimension $i-1$, then  a hyperplane of $I$ through $I''$ distinct from $\<I'',K_3,K_4\>$ does the job. If $\<I'',K_3,K_4\>=I$, then we can consider a line $L$ intersecting $K_3$ and $K_4$ in respective distinct points and not intersecting $I''$. Now the space $\<I'',p\>$, with $p$ a point on $L$ not in $K_3\cup K_4$ is an $(i-1)$-space meeting our conditions.

\textbf{Third case: $\dim (I\cap J_4)=k+1$.} \\ Put $R_4=I\cap J_4$. If $K_3$ is not contained in $\<K_1,K_2,R_4\>$, then we find a hyperplane $I'$ of $I$ through $\<K_1,K_2,R_4\>$ not containing $K_3$ and we are done. So we may assume that $K_3$ is contained in $\<K_1,K_2,R_4\>$. As in the previous case (for instance, considering a $k$-space in $R_4$), it is easy to see that we can find a hyperplane $I_1$ of $I$ containing $K_1,K_2$, not containing $K_3$ and intersecting $R_4$ in a $k$-space $K_4$. Notice that $K_4$ is not contained in $\<K_1,K_2\>$ by assumption, so we can find an $(i-2)$-space $I'_1$ containing $\<K_1,K_2\>$ and intersecting $K_4$ in a $(k-1)$-space $K'_4$.    Hence $I'_1\cap J_1=K_1$, $I'_1\cap J_2=K_2$, $\dim (I'_1\cap J_3)\leq k-1$ and $\dim(I'_1\cap J_4)=k-1$. In $\Res(I_1')$, the spaces $\<J_1,I_1'\>/I_1'$, $\<J_2,I_1'\>/I_1'$ and $\<J_4,I_1'\>/I_1'$ have dimension $j-k-1$, $j-k-1$ and $j-k$, respectively. Since $j-k<n-i$, we can find a point  in $\Res(I'_1)$, which avoids these three subspaces, and also avoids $\<J_3,I_1'\>/I_1'$ in case $\dim(I_1'\cap J_3)=k-1$ (the union of two hyperplanes and two subhyperplanes is never the entire space for $|\L|>2$). That point corresponds to an $(i-1)$-space $I'$, which does the trick, interchanging the roles of $J_3$ and $J_4$.


So in all three cases, we found an $(i-1)$-space $I'$, and in $\Res(I')$, the spaces $\<J_1,I'\>/I'$, $\<J_2,I'\>/I'$ and $\<J_3,I'\>/I'$ have dimension $j-k-1$, $j-k-1$ and $j-k$, respectively. As in the third case above, we can find a point in $\Res(I')$ which avoids these three subspaces, and which also avoids $\<J_4,I'\>/I'$ in case $\dim(I'\cap J_4)=k-1$. This point corresponds to an $i$-space $I^*$ intersecting $J_1$ and $J_2$ in $k$-spaces, intersecting $J_3$ in a $(k-1)$-space, and intersecting $J_4$ in a space of dimension distinct from $k$. This contradiction to $\{J_1,J_2,J_3,J_4\}$ being a $\Gamma$-round-up quadruple concludes the proof of the lemma.    
\end{proof}

 


We now show that the $J_a$, $a\in\{1,2,3,4\}$, pairwise intersect in a common space. This is the analogue of Lemma~\ref{lem1}.

\begin{lemma}\label{lem4}
If $\{J_1,J_2,J_3,J_4\}$ is a $\Gamma$-round-up quadruple, then $J_1\cap J_2=J_2\cap J_3=J_3\cap J_4=J_1\cap J_3=J_2\cap J_4=J_1\cap J_4$.  
\end{lemma}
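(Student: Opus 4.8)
The plan is to prove the chain of equalities
\[
J_1\cap J_2=J_2\cap J_3=J_3\cap J_4=J_1\cap J_3=J_2\cap J_4=J_1\cap J_4
\]
by mimicking the structure of the proof of Lemma~\ref{lem1}, but now exploiting Lemma~\ref{lem2} in place of Observation~\ref{obs}. The natural first reduction is to order the six pairwise intersections and pick the pair, say $\{J_1,J_2\}$, for which $\dim(J_{\ell}\cap J_{\ell'})$ is \emph{maximal}. As in Lemma~\ref{lem1}, I would argue for a contradiction by assuming there is a point $p\in(J_1\cap J_2)\setminus(J_3\cup J_4)$ (if $J_1\cap J_2$ were contained in every other $J_a$, maximality would force all intersections equal, which is what we want). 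The goal is then to manufacture from $p$ a $k$-space, or a span of two $k$-spaces, that violates Lemma~\ref{lem2}, i.e.\ a subspace of dimension $\leq i$ spanned by $k$-spaces in two of the $J_a$ that fails to meet \emph{either} of the remaining two members in a $k$-space.

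**Key steps.** I would split into the same two cases as Lemma~\ref{lem1}, according to whether $\dim(J_1\cap J_2)\geq k$ or $<k$. In the first case, take a $k$-space $K\subseteq J_1\cap J_2$ containing $p$; since $K\subseteq J_1$ and $K\subseteq J_2$, Lemma~\ref{lem2} (applied with $\ell_1=1,\ell_2=2$, using $\<K,K\>=K$ of dimension $k\leq i$) forces $K$ to meet one of $J_3,J_4$ in a $k$-space, i.e.\ $K\subseteq J_3$ or $K\subseteq J_4$, so $p$ lies in $J_3$ or $J_4$, contradicting the choice of $p$. In the second case $\ell:=\dim(J_1\cap J_2)<k$, I would take $k$-spaces $K_1\subseteq J_1$, $K_2\subseteq J_2$ each containing $J_1\cap J_2$, so $\dim\<K_1,K_2\>=2k-\ell$. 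Here the subtlety is that $2k-\ell$ may exceed $i$, so Lemma~\ref{lem2} is not immediately applicable; I would first cut down to a subspace of dimension exactly $i$ (or $\leq i$) spanned appropriately, using Lemma~\ref{choose} in the relevant residue to control the intersections with $J_1,J_2$ while keeping them equal to $k$-spaces. Then Lemma~\ref{lem2} forces $\<K_1,K_2\>$ (or the cut-down version) to meet $J_3$ or $J_4$ in a $k$-space $K_3$, and a dimension count exactly as in Lemma~\ref{lem1} (Grassmann identity, yielding $\dim(K_1\cap K_3)\geq\ell$) together with maximality of $\ell$ produces the contradiction, possibly after replacing $K_1$ by another $k$-space $K_1'$ through $J_1\cap J_2$ avoiding $K_1\cap K_3$.

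**The main obstacle.** The genuinely new difficulty compared to Lemma~\ref{lem1} is that Lemma~\ref{lem2} gives a weaker conclusion: it asserts that $\<K_1,K_2\>$ meets \emph{one of the two} remaining members $J_3,J_4$ in a $k$-space, not a specified one, and not necessarily both. So after deriving $K\subseteq J_3$ (say), I have only shown one inclusion and must run the argument again, or by symmetry of the round-up condition, to pin down the full chain of six equalities rather than a single one. I expect the bookkeeping to force me to establish the equalities in a careful order: first show that the maximal intersection $J_1\cap J_2$ is contained in every $J_a$, then deduce all six intersections coincide by a symmetric maximality argument. The disjunctive conclusion of Lemma~\ref{lem2} is where care is needed — I would handle it by observing that whichever of $J_3,J_4$ receives the forced inclusion already contradicts $p\notin J_3\cup J_4$, so the disjunction does no harm in the contradiction step, and the maximality of $\dim(J_1\cap J_2)$ then propagates the equality to all pairs.
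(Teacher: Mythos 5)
Your overall strategy (pick a pair of maximal intersection dimension, split into the cases $\ell\geq k$ and $\ell<k$, and feed spans of $k$-spaces into Lemma~\ref{lem2}) is the same as the paper's, but there is a genuine logical gap at the very first step, and it sits exactly where the paper does most of its work. The negation of ``$J_1\cap J_2$ is contained in every other $J_a$'' is \emph{not} the existence of a point $p\in(J_1\cap J_2)\setminus(J_3\cup J_4)$; it is that $J_1\cap J_2\not\subseteq J_3$ \emph{or} $J_1\cap J_2\not\subseteq J_4$. Your dichotomy silently skips the intermediate case where $J_1\cap J_2$ lies in exactly one of $J_3,J_4$, say in $J_3$ but not in $J_4$. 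In that case no point of $J_1\cap J_2$ avoids both $J_3$ and $J_4$, so your contradiction argument never starts; yet the conclusion of the lemma genuinely fails in such a configuration (maximality gives $J_1\cap J_4\neq J_1\cap J_2$), so the configuration must be proved incompatible with the round-up property. Note that for $\ell\geq k$ your sub-case (a point avoiding both) is in fact the \emph{easy} half: Lemma~\ref{lem2} kills it in two lines, exactly as in the paper's opening sentences of its first case. Ruling out the intermediate case is the heart of the paper's proof: one takes a $k$-space $K\subseteq J_1\cap J_2$ and a point $p\in J_3\setminus J_1$, and uses Lemma~\ref{choose} to build an $i$-space $I_p$ containing $\<K,p\>$ that meets $J_1$ and $J_2$ exactly in $K$, meets $J_3$ exactly in the $(k+1)$-space $\<K,p\>$ (hence is deliberately \emph{not} adjacent to $J_3$), and avoids $J_4\setminus J_3$; the quadruple property then forces $I_p$ to be adjacent to $J_4$, so $J_3\cap J_4$ contains a $k$-space inside $\<K,p\>$, and varying $K$ and $p$ while analysing whether $\dim(J_2\cap J_3\cap J_4)$ is $\leq\ell-2$, $=\ell-1$ or $=\ell$ eventually yields the contradiction. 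This use of the full round-up property through an auxiliary $i$-space chosen to be non-adjacent to one member is an idea your proposal never invokes; Lemma~\ref{lem2}, which only sees spans of two $k$-spaces, is not strong enough on its own.

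A secondary problem concerns your case $\ell<k$. You correctly flag that $\dim\<K_1,K_2\>=2k-\ell$ may exceed $i$ (a condition the paper itself applies Lemma~\ref{lem2} without checking), but the proposed repair cannot work as stated: any $k$-spaces $K_1\subseteq J_1$ and $K_2\subseteq J_2$ both containing $J_1\cap J_2$ satisfy $K_1\cap K_2=J_1\cap J_2$, so their span has dimension exactly $2k-\ell$, and no proper subspace of it is again a span of two such $k$-spaces; there is nothing to ``cut down'' while staying inside the hypotheses of Lemma~\ref{lem2}. Moreover, even granting both cases, your closing plan (``first show the maximal intersection is contained in every $J_a$, then deduce the six equalities'') omits the bookkeeping the paper actually needs when $\ell<k$: the three $k$-spaces $K_1,K_1',K_1''$ through $J_1\cap J_2$, the tracking of which of $J_3,J_4$ absorbs each span, the appeal to the argument of Lemma~\ref{lem5} when some intersections are empty, and the final symmetric argument with the auxiliary parameters $m$ and $f$. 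In short, the proposal reproduces the easy portion of the paper's argument and leaves its essential portion unproved.
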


\begin{proof}
Let the maximum dimension of $J_a\cap J_b$, $\{a,b\}\subseteq\{1,2,3,4\}$, $a\neq b$, be $\ell$.  There are two possibilities.

\begin{enumerate}
\item Suppose $\ell\geq k$. We may assume $\dim(J_1\cap J_2)=\ell$. By Lemma~\ref{lem2}, every $k$-space contained in $J_1\cap J_2$ is also contained in $J_3\cup J_4$. Since $J_1\cap J_2$ cannot be the union of $J_1\cap J_2\cap J_3$ and $J_1\cap J_2\cap J_4$ unless one of the latter two spaces coincides with $J_1\cap J_2$, we may assume that $J_1\cap J_2\subseteq J_3$. By maximality of $\ell$, we already have $J_1\cap J_2= J_3\cap J_1=J_2\cap J_3$. Now we choose a $k$-space $K$ in $J_1\cap J_2$ and a point $p\in J_3\setminus J_1$. Applying Lemma~\ref{choose} in $\Res(\<K,p\>)$, we find an $i$-space $I_p$ containing $\<K,p\>$, intersecting $J_1$ and $J_2$ precisely in $K$, intersecting $J_3$ in $\<K,p\>$, and disjoint from $J_4\setminus J_3$ (note that $\dim\Res(\<K,p\>)=n-k-2$, $\dim\<J_1,p\>/\<K,p\>=\dim\<J_2,\>/\<K,p\>=j-k-1$, $\dim J_3/\<K,p\>=j-k-2$, $\dim \<K,J_4,p\>/\<K,p\>\leq j$, and $\dim I_p/\<K,p\>=i-k-2$). 


It follows that $I_p\cap J_4$ is a $k$-space. But $I_p\cap J_4\subseteq \<K,p\>$, by construction. So $J_4\cap J_3$ contains a $k$-space contained in $\<K,p\>$.  If $J_2\cap J_3\cap J_4$ has dimension $\leq\ell-2$, then we can choose $K$ such that $K\cap J_4$ is at most $(k-2)$-dimensional, a contradiction. Hence $J_2\cap J_3\cap J_4$ has dimension $\ell-1$ (if it had dimension $\ell$, the lemma was proved). So $\<J_2\cap J_3,J_3\cap J_4\>$ has dimension at most $\ell+1$. If $\dim\<J_2\cap J_3,J_3\cap J_4\><j$, then by picking $p$ outside    $\<J_2\cap J_3,J_3\cap J_4\>$, we again obtain a contradiction. Hence $\ell+1=j$ and so $\dim (J_3\cap J_4)=\ell$. By maximality of $\ell$ we can repeat the argument of the previous paragraph, and conclude that either $J_1$ or $J_2$ must contain $J_3\cap J_4$, clearly a contradiction.  Hence $J_2\cap J_3\cap J_4$ is $\ell$-dimensional after all, proving the lemma in this case.
\item Suppose $\ell<k$. Let $m\leq\ell$ be the maximal dimension of $J_1\cap J_a$, $a=2,3,4$ and suppose $\dim(J_1\cap J_2)=m$. Let $K_b$ be any $k$-space in $J_b$ containing $J_1\cap J_2$, $b=1,2$. By Lemma~\ref{lem2}, the subspace $\<K_1,K_2\>$ intersects one of $J_3,J_4$, say $J_3$, in a $k$-subspace $K_3$.  As in the second part of the proof of Lemma~\ref{lem1}, we deduce $J_1\cap J_3=K_1\cap K_3$. If this equals $J_1\cap J_2$, then fine. Otherwise we choose another $k$-space $K_1'$ in $J_1$ containing $J_1\cap J_2$ and not containing $K_1\cap K_3$. Then either the space $\<K_1',K_2\>$ intersects $J_3$ in a $k$-space (and then the same argument as in the second part of the proof of Lemma~\ref{lem1} leads to $J_1\cap J_2=J_2\cap J_3=J_3\cap J_1$), or $\<K_1',K_2\>$ intersects $J_4$ in a $k$-space $K_4$. Then $J_1\cap J_4=K_1'\cap K_4$. If the latter equals $J_1\cap J_2$, then fine again. Otherwise we choose a $k$-space $K_1''$ in $J_1$ containing $J_1\cap J_2$ and neither containing $K_1\cap K_3$ nor $K_1'\cap K_4$. Then $\<K_1'',K_2\>$ intersects one of $J_3,J_4$, say $J_3$, in a $k$-space, and the same argument as above leads to $J_1\cap J_2=J_2\cap J_3=J_3\cap J_1$. 

Hence in any case we may assume that $J_1\cap J_2=J_2\cap J_3=J_3\cap J_1$ (possibly by interchanging the roles of $J_3$ and $J_4$). Now let $f$ be the maximal dimension of $J_4\cap J_c$, $c=1,2,3$. If $f=-1$ and $m\geq 0$, then the second part of the proof of Lemma~\ref{lem5} can be copied and applied to $J_1,J_2,J_3$ and thus yields a contradiction. If $f=m=-1=\ell$, then the lemma is proved. Hence we may assume that $f\geq 0$ and $m\geq 0$ by symmetry. By the arguments above (interchanging $J_1$ and $J_4$, and $f$ and $m$), we may assume that $J_2\cap J_3=J_3\cap J_4=J_4\cap J_2$. But then $m=f=\ell$ and by maximality of $\ell$, $J_1\cap J_4$ is not bigger than $J_1\cap J_2$, hence coincides with it.
\end{enumerate}
The lemma is proved. 
\end{proof}

We can modify the proof of Lemma~\ref{lem5} to prove the following lemma.

\begin{lemma}\label{lem6}
Every $\Gamma$-round-up quadruple $\{J_1,J_2,J_3,J_4\}$ is a regular round-up quadruple.
\end{lemma}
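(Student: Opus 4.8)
The plan is to mirror the structure of the proof of Lemma~\ref{lem5}, which established the analogous statement for round-up \emph{triples}, now leveraging the already-established Lemma~\ref{lem4} in place of Lemma~\ref{lem1}. By Lemma~\ref{lem4} we know that all pairwise intersections $J_a\cap J_b$ coincide with a fixed subspace $D$; set $\ell=\dim D$. We must show two things: first that $\ell=j-1$, and second that $\dim\<J_1,J_2,J_3,J_4\>=j+1$. The natural route is again to split into the two cases $\ell\geq k$ and $\ell<k$, and to show that the second case is impossible while the first case forces regularity via Lemma~\ref{lem0}.

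First I would dispose of the case $\ell<k$. Here I expect the argument to be essentially a copy of case~(2) in the proof of Lemma~\ref{lem5}: pick $k$-spaces $K_1,K_1',K_2$ through $D$ inside $J_1,J_1,J_2$ respectively with $\<K_1,K_1'\>$ of dimension $k+1$; apply Lemma~\ref{lem2} to produce $k$-dimensional intersections $\<K_1,K_2\>\cap J_a=K_3$ and $\<K_1',K_2\>\cap J_b=K_3'$ for some $a,b\in\{3,4\}$; and then run the Grassmann-identity dimension count to force $\<K_1,K_1'\>\cap\<K_3,K_3'\>$ to have dimension at least $\ell+1$, contradicting that this intersection lies in $D$. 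The only extra care needed compared to Lemma~\ref{lem5} is that Lemma~\ref{lem2} only guarantees that $\<K_1,K_2\>$ meets \emph{one} of the remaining two $J$'s in a $k$-space (not a prescribed one), so I would argue by pigeonhole that the same member, say $J_3$, can be hit by two suitable choices, or else treat the branches symmetrically using the interchangeability of $i$ and $j$ and of $J_3,J_4$.

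For the main case $\ell\geq k$, I would reduce to Lemma~\ref{lem0}, whose hypothesis is exactly that every line meeting two of the $J_a$ (outside $D$) meets a third. Concretely, given $p_a\in J_a\setminus D$ for two indices, I would choose $k$-spaces $K_a\ni p_a$ meeting $D$ in a common $(k-1)$-space, so that $\<K_1,K_2\>$ is $(k+1)$-dimensional; by Lemma~\ref{lem2} this subspace meets one of the other two, say $J_3$, in a $k$-space $K_3$, and the shared $(k-1)$-space inside $D$ forces the line $p_1p_2$ to meet $K_3\subseteq J_3$ in a point. This verifies the Lemma~\ref{lem0} condition for the quadruple, and Lemma~\ref{lem0} (stated for four not-necessarily-distinct $j$-spaces with common pairwise intersection $D$) then yields both $\dim D=j-1$ and $\dim\<J_1,J_2,J_3,J_4\>=j+1$, i.e.\ regularity.

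The main obstacle, I expect, is the bookkeeping in the $\ell\geq k$ case caused by the \emph{disjunctive} conclusion of Lemma~\ref{lem2}: unlike Observation~\ref{obs} for triples, the quadruple lemma only tells us that $\<K_1,K_2\>$ meets \emph{at least one} of $J_3,J_4$ in a $k$-space, so a single application does not directly verify the hypothesis of Lemma~\ref{lem0} for an \emph{arbitrary} pair of the four spaces. I would handle this by exploiting the full symmetry of the situation under $\{1,2,3,4\}$ permutations together with Lemma~\ref{lem4}, arguing that it suffices to check the incidence condition of Lemma~\ref{lem0} for the pairs for which Lemma~\ref{lem2} delivers a $k$-space, and that the remaining configurations reduce to these by relabelling. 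A secondary subtlety, as in Lemma~\ref{lem2}, is the exclusion of $|\L|=2$, but since the type~III graphs over $\F_2$ with $k<\min\{i,j\}$ were already set aside at the outset, this causes no trouble here.
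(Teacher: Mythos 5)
Your overall route is exactly the paper's: use Lemma~\ref{lem4} to get a common pairwise intersection $D$ of dimension $\ell$, split into $\ell\geq k$ and $\ell<k$, settle the first case via Lemma~\ref{lem2} and Lemma~\ref{lem0}, and kill the second case by the Grassmann count of case~(2) of Lemma~\ref{lem5}. Your $\ell\geq k$ case coincides with the paper's argument, and the ``main obstacle'' you flag there is in fact not one: Lemma~\ref{lem0} was deliberately stated with a disjunctive hypothesis (a line meeting two members need only meet \emph{one of} the other two), so a single application of Lemma~\ref{lem2} verifies it for the pair $(J_1,J_2)$, and relabelling covers all pairs --- exactly as you conclude.

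The genuine gap is in your $\ell<k$ case. With only the two spaces $K_1,K_1'$ you introduce, ``pigeonhole'' is vacuous: the two spans $\<K_1,K_2\>$ and $\<K_1',K_2\>$, distributed over the two possible targets $J_3,J_4$, may perfectly well hit different ones; and your fallback of ``treating the branches symmetrically'' does not repair this. The contradiction in Lemma~\ref{lem5} requires $K_3$ and $K_3'$ to lie in the \emph{same} $j$-space, because only then is $\<K_1,K_1'\>\cap\<K_3,K_3'\>$ contained in a pairwise intersection $J_1\cap J_3=D$ of dimension $\ell$; if instead $K_3\subseteq J_3$ while $K_3'\subseteq J_4$, that intersection is only known to lie in $J_1\cap\<K_3,K_3'\>$, about which nothing useful can be said, and no relabelling of $J_3,J_4$ (nor interchanging $i$ and $j$) converts a mixed configuration into an unmixed one. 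The paper's fix is to take \emph{three} $k$-spaces $K_1,K_1',K_1''$ in $J_1$ through $J_1\cap J_2$, all contained in a common $(k+1)$-space: among the three spans $\<K_1,K_2\>$, $\<K_1',K_2\>$, $\<K_1'',K_2\>$, pigeonhole then genuinely forces two of them to meet the same member of $\{J_3,J_4\}$ in a $k$-space, and the common-$(k+1)$-space condition guarantees that the corresponding two $K$'s in $J_1$ span a $(k+1)$-space, so the dimension count of Lemma~\ref{lem5} applies verbatim. So your instinct (pigeonhole) is the correct mechanism, but as written your construction never supplies the third pigeon it needs, and the alternative fix you offer would fail.
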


\begin{proof}
By Lemma~\ref{lem4}, we know $J_a\cap J_b=J_c\cap J_d$, for all $a,b,c,d\in\{1,2,3,4\}$, $a\neq b$ and $c\neq d$.  Put $\ell=\dim (J_1\cap J_2)$. There again are two possibilities to consider separately. 

\begin{enumerate}
\item Assume $\ell\geq k$. Let $p_a\in J_a\setminus J_3$ be arbitrary, $a=1,2$. For $a=1,2$, choose a $k$-space $K_a$ containing $p_a$ and intersecting $J_1\cap J_2$ in a $(k-1)$-space, with the latter independent of $a$. Lemma~\ref{lem2} implies that the $(k+1)$-space $\<K_1,K_2\>$ intersects either $J_3$ or $J_4$ in $k$-space $K$ intersecting $J_1\cap J_2$ in $K_1\cap K_2$. Hence the line $p_1p_2$ intersects $K$, and hence either $J_3$ or $J_4$, in a point. Lemma~\ref{lem0} shows the assertion.  
\item As in the proof of Lemma~\ref{lem5}, we prove that the situation $\ell<k$ leads to a contradiction. 
Indeed,  the argument is similar to the argument in the second part of the proof of Lemma~\ref{lem5}, but we consider three $k$-subspaces $K_1,K_1',K_1''$ in $J_1$ through $J_1\cap J_2$ contained in a common $(k+1)$-space, choose a $k$-space $K_2\subseteq J_2$ with $J_1\cap J_2\subseteq K_2$ and note that at least two of $\<K_1,K_2\>$, $\<K_1',K_2\>$, $\<K_1'',K_2\>$ intersect the same subspace among $J_3,J_4$ in $k$-spaces. Then we argue as in the second part of the proof of Lemma~\ref{lem5} and derive a contradiction.
\end{enumerate}
This completes the proof of the lemma.
\end{proof}

As in the previous subsection, we can now again define $\Gamma_1$ with vertex set the set of $j$-spaces, and adjacency being contained in a common $\Gamma$-round-up quadruple. The graph $\Gamma_1$  has two distinguishable kinds of maximal cliques  which we use to define the graph $\Gamma_2$,  isomorphic to $\Gamma^n_{[j-1,j+1]}(\L)$. Recall that $\Gamma_2$ is tripartite and that the triparts correspond to $j$-subspaces and the two kinds of maximal cliques in $\Gamma_1$. We define a \emph{$j$-Grassmann line} as the set of $j$-spaces contained in a given $\Gamma$-round-up quadruple $Q$, or forming such a quadruple with any three members of $Q$. Alternatively, these are the intersections of a member of one class with a member of the other class of maximal cliques of $\Gamma_1$ if this intersection contains at least two elements. 

Let $I$ be an $i$-space intersecting some $j$-space $J$ of a given maximal clique $Q$ in a $k$-space $K$. There are four possibilities.

\begin{itemize}
\item[(1)] The clique $Q$ consists of all $j$-spaces through a given $(j-1)$-space $J'$ and $K\subseteq J'$. Then $I$ is $\Gamma$-adjacent to either none, all, or all but one members of an arbitrary $j$-Grassmann line in $Q$.  
\item[(2)]  The clique $Q$ consists of all $j$-spaces through a given $(j-1)$-space $J'$ and $K\cap J'$ has dimension $k-1$. Then $I$ is $\Gamma$-adjacent to either none, exactly one, or all members of an arbitrary $j$-Grassmann line in $Q$.  
\item[(3)] The clique $Q$ consists of all $j$-spaces in a given $(j+1)$-space $J''$ and $\dim(I\cap J'')=k+1$. Then $I$ is $\Gamma$-adjacent to either none, all, or all but one members of an arbitrary $j$-Grassmann line in $Q$.  
\item[(4)] The clique $Q$ consists of all $j$-spaces in a given $(j+1)$-space $J''$ and $I\cap J''=K$. Then $I$ is $\Gamma$-adjacent to either none, exactly one, or all members of an arbitrary $j$-Grassmann line in $Q$.  
\end{itemize}

If we define the bipartite graph with vertices at the one hand the $i$-spaces and at the other hand the maximal cliques of fixed type, and we define an $i$-space $I$ and a clique $Q$ to be adjacent if  $I$ is $\Gamma$-adjacent to either none, all, or all but one members of an arbitrary $j$-Grassmann line in $Q$, then in Case (1) we denote the graph $\Gamma^{-1}_{\forall_{-1}}$ and it is isomorphic to $\Gamma^n_{i,j-1;k}(\L)$; in Case (3) we denote it by $\Gamma^{+1}_{\forall_{-1}}$ and it is isomorphic to $\Gamma^n_{i,j+1;k+1}(\L)$. 

If we now define an $i$-space $I$ and a clique $Q$ to be adjacent if  $I$ is $\Gamma$-adjacent to either none, exactly one, or all members of an arbitrary $j$-Grassmann line in $Q$, then in Case (2) we denote the graph $\Gamma^{-1}_{\exists_{1}}$ and it is isomorphic to $\Gamma^n_{i,j-1;k-1}(\L)$; in Case (4) we denote it by $\Gamma^{+1}_{\exists_{1}}$ and it is isomorphic to $\Gamma^n_{i,j+1;k}(\L)$. 

We can again repeat these four constructions to the appropriate graphs and obtain, with self-explaining notation, the following isomorphisms.
 $$\begin{array}{ll}\Gamma^{-m}_{\exists_1}\cong \Gamma^n_{i,j-m;k-m}(\L), \hspace{1cm} &\Gamma^{+m}_{\exists_1}\cong\Gamma^n_{i,j+m,k}(\L),\\ [3mm] \Gamma^{-m}_{\forall_{-1}}\cong\Gamma^n_{i,j-m,k}(\L),& \Gamma^{+m}_{\forall_{-1}}\cong\Gamma^n_{i,j+m,k+m}(\L),\end{array}$$
for all natural $m$ for which these make sense. 

The smallest $m$ for which the next graph in the series $\Gamma^{-m'}_{\exists_1}$ cannot be defined (since every $i$-space adjacent to at least one member of the appropriate maximal clique, is always adjacent to all but one members of a $(j-m-1)$-Grassmann line), is $m=k+1$. Similarly, the smallest $m$ for which the next graph in the series $\Gamma^{+m'}_{\exists_1}$ becomes void is $m=n+k-i-j$, since an $i$-space and an $(n+k-i+1)$-space inside an $n$-space always meet in at least a $(k+1)$-space (and this does not hold for the previous step!). As in the previous section, this determines $k$ and $n-i-j$ uniquely. 

Now completely similar to the previous section, the smallest value of $m$ for which $\Gamma^{-m}_{\forall_{-1}}$ becomes an empty graph is $j-k+1$, and the smallest value of $m$ for which $\Gamma^{+m}_{\forall_{-1}}$ becomes empty    is $i-k+1$. This determines $i$ and $j$, and so we determined all values $i,j,k,n$. 

The proofs of the Main Results are complete now. 

We mention the following special case of Main Result~\ref{main1}. Suppose $-1\leq k\leq j\leq n-1$, $j\neq -1$ and recall that $\Gamma^n_{j;k}(\L)$ denotes the graph of $j$-spaces of $\PG(n,\L)$ where two $j$-spaces are adjacent if they intersect in a $k$-space.

\begin{cor}\label{th2}
Let $\L$ and $\L'$ be two skew fields, and let $-1\leq k\leq j\leq n-1$, $-1\leq k'\leq j'\leq n'-1$ be integers, with $-1\notin\{j,j'\}$ and $n\geq 2$. 
\begin{itemize}
\item[$(i)$] If $j=n-1$ and $k=n-2$, or $j=0$ and $k=-1$, then $\Gamma^n_{j;k}(\L)$ is a complete graph.
\item[$(ii)$]  If $n+k< 2j$ or $j=k$, then $\Gamma^n_{j;k}(\L)$ is an empty graph.
\item[$(iii)$] If $\L\cong\L'$, $n=n'$, $j'=n-1-j$ and $k'=n-1+k-2j$, then $\Gamma^n_{j;k}(\L)\cong \Gamma^{n'}_{j';k'}(\L')$.
\item[$(iv)$] If $2j\leq n-1$, $-1\leq k<j$, $(k,j)\neq(-1,0)$ and $2j'\leq n'-1$, then $\Gamma^n_{j;k}(\L)\cong\Gamma^{n'}_{j'; k'}(\L')$ if and only if $\L\cong \L'$ and $(j,k,n)=(j',k',n')$. In this case every graph isomorphism is induced by a semi-linear bijection from $V_n(\L)$ to $V_{n'}(\L')$, or possibly to $V_{n'}^*(\L')$ (the dual of $V_{n'}(\L')$) if $2j=n-1$ and $\L'\cong (\L')^*$ (the latter is the opposite skew field). 
\end{itemize} 
\end{cor}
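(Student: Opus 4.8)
The plan is to reduce everything to Main Result~\ref{main1} through the bipartite double, exactly as Corollary~\ref{th1} was reduced to Main Result~\ref{main2}. The crucial observation is that, because $k<j$, no $j$-space meets itself in a $k$-space, so the ordinary (not extended) bipartite double satisfies $2\Gamma^n_{j;k}(\L)\cong\Gamma^n_{j,j;k}(\L)$: the two copies of the $j$-spaces are the two biparts, and $(J,1)$ is adjacent to $(J',2)$ precisely when $\dim(J\cap J')=k$, no diagonal edges occurring since $j\neq k$. Thus each part of the corollary will be read off from the corresponding part of Main Result~\ref{main1} applied to $\Gamma^n_{j,j;k}(\L)$.

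First I would dispose of $(i)$, $(ii)$ and $(iii)$, which are the ``easy'' parts. For $(i)$, two distinct hyperplanes meet in an $(n-2)$-space and two distinct points meet in the empty space, so $\Gamma^n_{j;k}(\L)$ is complete; for $(ii)$, the Grassmann identity forces $\dim(J\cap J')\geq 2j-n>k$ when $n+k<2j$, whereas $j=k$ would force $J=J'$, so the graph is empty in both cases; and $(iii)$ is the standard duality of $\PG(n,\L)$, which sends a $j$-space to an $(n-1-j)$-space and, by an annihilator dimension count, replaces intersection in a $k$-space by intersection in an $(n-1+k-2j)$-space. These three correspond, via the bipartite double, to the easily verified parts $(i)$--$(iv)$ of Main Result~\ref{main1}.

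The substance is $(iv)$, which I would prove by a lift-and-descend argument. Given an isomorphism $\phi\colon\Gamma^n_{j;k}(\L)\to\Gamma^{n'}_{j';k'}(\L')$, I define $2\phi$ on $\Gamma^n_{j,j;k}(\L)\cong 2\Gamma^n_{j;k}(\L)$ to act as $\phi$ on each of the two copies; since adjacency in the bipartite double is inherited from $\Gamma^n_{j;k}(\L)$, this is a bipart-preserving graph isomorphism onto $\Gamma^{n'}_{j',j';k'}(\L')$. The hypotheses $2j\leq n-1$, $-1\leq k<j$ and $(k,j)\neq(-1,0)$ become, with $i=j$, exactly the hypotheses $i+j\leq n-1$, $(i,j)\neq(0,0)$, $k<j$ of Main Result~\ref{main1}$(v)$ (the value $k=-1$ being admitted there via Lemma~\ref{k=-1}). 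Invoking Main Result~\ref{main1}$(v)$ then gives $\L\cong\L'$, $(j,k,n)=(j',k',n')$, and a semi-linear bijection $g\colon V_n(\L)\to V_{n'}(\L')$ (or a duality onto $V_{n'}^*(\L')$ when $2j=n-1$ and $\L'\cong(\L')^*$) inducing $2\phi$.

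The only genuine obstacle is the descent from $2\phi$ back to $\phi$. Because $\Gamma^n_{j,j;k}(\L)$ carries the extra symmetry interchanging its two biparts, a priori $g$ could induce $2\phi$ only up to this swap; but this is excluded by construction, since $2\phi$ sends the first copy to the first copy, whereas the swapped realization would send it to the second. Hence $g$ induces $2\phi$ in a bipart-preserving way, and reading off the first copy yields $\phi(J)=g(J)$ for every $j$-space $J$, so $\phi$ is induced by $g$. The converse implication is immediate, which finishes $(iv)$. I expect this bipart bookkeeping, together with verifying that the boundary conditions match those of Main Result~\ref{main1}$(v)$, to be the only points needing care; everything else transcribes the proof of Corollary~\ref{th1}.
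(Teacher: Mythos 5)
Your proposal is correct and takes essentially the same approach as the paper: the paper's proof of Corollary~\ref{th2} is exactly to pass to the (ordinary) bipartite double via the observation $2\Gamma^n_{j;k}(\L)\cong\Gamma^n_{j,j;k}(\L)$ (valid since $k<j$) and then apply Main Result~\ref{main1}. Your lift-and-descend bookkeeping for the isomorphism and your check that the hypotheses of Main Result~\ref{main1}$(v)$ are satisfied with $i=j$ merely spell out details the paper leaves implicit.
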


\begin{proof}
Again by taking bipartite double of the graphs in question and then applying Main Result~\ref{main1}.
\end{proof}

\begin{rem}\em\label{surjections}
In the statements of all our results, we may substitute ``graph isomorphism'' by ``graph epimorphism'', where \emph{epimorphism} means a surjective mapping on the vertices such that two vertices in the source graph are adjacent if and only if their images are adjacent. Indeed, if there would exist a non-bijective graph epimorphism, then there are two vertices in the source graph with exactly the same neighborhood set. But it is trivial to see that this is impossible for all the graphs we defined, as soon as they are nontrivial.  
\end{rem}

\section{The finite and the thin case}
\subsection{The finite case}
In the finite case, there is a shortcut to prove our main results, though very un-algorithmic, but rather efficient.

It follows from the main result and Table VI in \cite{Lie-Pre-Sax:87} that $\mathsf{P\Gamma L}(n+1,q)$, extended with a duality whenever possible, is a maximal subgroup of the symmetric or alternating group $\mathsf{Sym}(N),\mathsf{Alt}(N)$, where $N$ is the number of $j$-spaces of $\PG(n,q)$, except possibly when $q\in\{2,3\}$ and $N=q^{m-1}(q^{m}-1)/(2,q-1)$. But in this case, $N$ can never be the number of $j$-spaces of $\PG(n,q)$, for any $j$, since this number is never divisible by $q$. Hence, if the automorphism group of the graph $\Gamma^n_{i,j;k}(q)$ or $\Gamma^n_{i,j;\geq k}(q)$, with the appropriate restrictions to make the graph nontrivial, were larger than the corresponding group induced by semi-linear permutations, then the full symmetric or alternating group would act on at least one of the bipartition classes. We now claim that this implies that the graph is complete bipartite, which is a contradiction. Indeed, we have the following lemma.

\begin{lemma}
Let $v_1,v_2$ be two distinct vertices of one of the nontrivial graphs $\Gamma^n_{i,j;k}(q)$ or $\Gamma^n_{i,j;\geq k}(q)$ in the same bipart, then there are at least two neighbors of $v_1$ not adjacent to $v_2$.
\end{lemma}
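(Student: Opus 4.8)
The plan is to prove the statement by a direct geometric construction, exhibiting two distinct $i$-spaces adjacent to $v_1$ but not to $v_2$. By the symmetry of the set-up in $i$ and $j$ we may assume that $v_1,v_2$ are $j$-spaces, so that their common neighbours are $i$-spaces. Write $d=\dim(v_1\cap v_2)$, with $-1\le d\le j-1$ since $v_1\ne v_2$. For the graph $\Gamma^n_{i,j;k}(q)$, an $i$-space $I$ is adjacent to $v_1$ but not to $v_2$ precisely when $\dim(I\cap v_1)=k$ and $\dim(I\cap v_2)\ne k$; for $\Gamma^n_{i,j;\geq k}(q)$ the condition becomes $\dim(I\cap v_1)\ge k$ and $\dim(I\cap v_2)\le k-1$. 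I would treat both types in parallel, since the same construction produces the required $i$-space in each.

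First I would produce one suitable $i$-space. Since $v_1\not\subseteq v_2$, there is a $k$-space $K\subseteq v_1$ with $K\not\subseteq v_2$, hence $\dim(K\cap v_2)\le k-1$. Passing to $\Res(K)$, which has dimension $n-k-1$, the images $\langle v_1,K\rangle/K$ and $\langle v_2,K\rangle/K$ have dimension $j-k-1$ and at most $j$, respectively, while an $i$-space through $K$ corresponds to an $(i-k-1)$-space of $\Res(K)$. Since $i+j\le n-1$ leaves exactly the room required, Lemma~\ref{choose} applies in $\Res(K)$ and yields an $(i-k-1)$-space $C$ disjoint from both images. The $i$-space $I$ corresponding to $C$ then satisfies $I\cap v_1=K$ and $I\cap v_2=K\cap v_2$, so $\dim(I\cap v_1)=k$ (which also gives $\dim(I\cap v_1)\ge k$ in the "$\geq$"-case) and $\dim(I\cap v_2)=\dim(K\cap v_2)\le k-1$. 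Thus $I$ is adjacent to $v_1$ but not to $v_2$ in either graph.

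To obtain a second such $i$-space, I would exploit the freedom in the choice of $K$: since $v_1$ is a $j$-space and $v_1\cap v_2$ has dimension $d<j$, over $\F_q$ with $q\ge 2$ there are at least two distinct $k$-spaces $K,K'\subseteq v_1$ with $K,K'\not\subseteq v_2$. Extending each of them as above to $i$-spaces $I,I'$ with $I\cap v_1=K$ and $I'\cap v_1=K'$ forces $I\ne I'$, because their intersections with $v_1$ already differ, and both are adjacent to $v_1$ and non-adjacent to $v_2$. The main obstacle is to guarantee this second choice in the tightest parameter ranges — in particular when $d=j-1$, so that every admissible $K$ meets $v_1\cap v_2$ in a $(k-1)$-space, and when $q=2$, where the relevant counts are smallest. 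Here I would verify the number of admissible $K$, and of admissible complements $C$ in $\Res(K)$, directly; the point is that these counts drop below $2$ exactly when the graph degenerates to a matching or to the bipartite complement of a matching, which are excluded by the nontriviality hypothesis. This is precisely why the bound is $2$ rather than $1$, and why nontriviality, rather than mere non-completeness, is the correct assumption.
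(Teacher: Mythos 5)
Your core construction is sound and is in fact the same as the paper's own argument for its main case: the paper also takes two $k$-spaces $w_1,w_2\subseteq v_1$ not contained in $v_2$ and extends them to $i$-spaces meeting $v_2$ in dimension less than $k$, so for $0\leq k$ and $i+j\leq n-1$ your proof (with the residue/Lemma~\ref{choose} details spelled out) is correct. The genuine gap is the case $k=-1$, which the lemma must cover: $\Gamma^n_{i,j;-1}(q)$ (adjacency being disjointness) is nontrivial for a wide range of parameters --- e.g.\ $\Gamma^3_{1,1;-1}(q)$, skew lines of $\PG(3,q)$ --- yet your very first step, ``there is a $k$-space $K\subseteq v_1$ with $K\not\subseteq v_2$,'' is impossible there, since the unique $(-1)$-space is contained in every subspace. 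No variant of your construction can repair this: for $k=-1$ a neighbor of $v_1$ that is not adjacent to $v_2$ is an $i$-space \emph{disjoint} from $v_1$ but \emph{meeting} $v_2$, so one must build $I$ through two distinct points of $v_2\setminus v_1$ while keeping $I$ disjoint from $v_1$; that is, one interchanges the roles of $v_1$ and $v_2$ and runs the $k=0$ argument. This role interchange is exactly how the paper disposes of $\Gamma^n_{i,j;-1}(q)$, and it is absent from your proposal.

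A second, related issue: you assume $i+j\leq n-1$, but the hypothesis of the lemma is only nontriviality, and nontrivial graphs with $i+j>n-1$ exist, e.g.\ $\Gamma^4_{2,2;1}(q)$, $\Gamma^4_{2,2;\geq 1}(q)$ and $\Gamma^4_{2,2;0}(q)$ (planes of $\PG(4,q)$). For such parameters two subspaces of the relevant dimensions in $\Res(K)$ can never be disjoint, so Lemma~\ref{choose} cannot be applied as you do. These cases can be reduced to the ones you treat because the conclusion is isomorphism-invariant and one may apply the duality of part $(iv)$ of Main Result~\ref{main1} (respectively part $(iii)$ of Main Result~\ref{main2}) --- but this reduction must be stated, and it sends the exact graphs with $i+j=n+k$ (such as $\Gamma^4_{2,2;0}(q)$) precisely onto the $k=-1$ graphs, so the role-interchange argument above cannot be avoided even after dualizing. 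Finally, your closing diagnosis of where the ``tight'' cases lie is off target: for $k<j$ the number of admissible $K$'s is at least $q^{j-k}\geq 2$, even for $q=2$ and $d=j-1$, so within your assumed parameter range the only degeneration is the matching $k=i=j$; the complement-of-a-matching graphs fail the lemma not for lack of $K$'s but because they have $i+j=n+k$ (or $k=-1$), i.e.\ they sit exactly in the cases your construction does not reach.
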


\begin{proof}
For $\Gamma^n_{i,j;k}(q)$, $k>-1$, and $\Gamma^n_{i,j;\geq k}(q)$, with $i+j\leq n+1$, this follows immediately from the easy fact that, for every pair $v_1,v_2 $of $j$-spaces ($i$-spaces) there exists a pair $w_1,w_2$ of $k$-spaces contained in $v_1$ but not in $v_2$. For $\Gamma^n_{i,j;-1}(q)$, this follows from the previous fact for $\Gamma^n_{i,j;0}(q)$ and with the roles of $v_1$ and $v_2$ interchanged.  
\end{proof}

Now let $v$ be a vertex of  $\Gamma^n_{i,j;k}(q)$ or $\Gamma^n_{i,j;\geq k}(q)$. Let $w$ be a neighbor of $v$, and let $w'$ be an arbitrary vertex in the same bipart as $w$, but not adjacent to $v$. Let $\theta$ be an automorphism of the graph stabilizing the set of neighbors of $v$ distinct from $w$, and mapping $w$ onto $w'$ (which exists since the alternating group acts as an automorphism group on each bipart). By the previous lemma, $v$ is fixed, and so $v$ is adjacent to $w'$. Varying $v$ and $w'$, we see that the graph is complete bipartite, proving our claim. 

This in particular proves Main Result~\ref{main1} for $|\L|=q=2$. The graphs $\Gamma^n_{i,j;k}(2)$ and $\Gamma^n_{i,j;\geq k}(2)$, with the appropriate restrictions, are not isomorphic to any other graph of type III because it has a different automorphism group. These graphs are mutually non-isomorphic because they pairwise have different orders (cardinalities of the biparts)  and bi-valences (we leave the details to the reader).

\subsection{The finite thin case}\label{finitethin}
The same argument works in the \emph{thin} case, except that $\mathsf{Sym}(2n-1)$ acting on all $(n-1)$-subsets of a $(2n-1)$-set is not maximal in $\mathsf{Sym}(\frac{1}{2}\binom{2n}{n})$ (indeed, it is contained in $\mathsf{Sym}(2n)$ acting on partitions of sizes $n,n$ of a $2n$-set), giving rise to a counter example, and there are three other small exceptions, one of which also gives rise to a counter example. We now provide the details.

Let $\Omega^{(n)}$ be an $n$-set and let $\Omega_j$ be the set of all $j$-subsets of $\Omega$, $0\leq j\leq n$. Suppose $i,j,k$ are integers with $0\leq k\leq i\leq j\leq n-1$ and let $\Gamma^n_{i,j;\geq k}$ and $\Gamma^n_{i,j;k}$ be the bipartite graphs with biparts $\Omega_j$ and $\Omega_i$, where a $j$-subset is adjacent to an $i$-subset if their intersection is of size at least $k$ and precisely $k$, respectively. For convenience, we also denote $\Gamma^n_{i,j;k}$ by $\Gamma^n_{j,i;k}$ and $\Gamma^n_{i,j;\geq k}$ by $\Gamma^n_{j,i;\geq k}$, $0\leq k\leq i\leq j\leq n-1$. 

\begin{theorem}\label{thethin}
Let $0\leq k\leq i\leq j\leq n-1$ and $0\leq k'\leq i'\leq j'\leq n'-1$ be integers with $0\notin\{i,i'\}$ and $n\geq 2$. 
\begin{itemize}
\item[$(i)$] If $i=j=k\geq 1$, then $\Gamma^n_{i,j;k}$ and $\Gamma^n_{i,j;\geq k}$ are matchings; if $i+j=n$ and $k=0$, then $\Gamma^n_{i,j;k}$ is a matching.
\item[$(ii)$] If $i=j=n-1=k+1$ or $i=j=1=k+1$, then $\Gamma^n_{i,j;k}$ is the complement of a matching.
\item[$(iii)$]  If $n+k<i+j$, then $\Gamma^n_{i,j;k}$ is an empty graph;  if $n+k\leq i+j$ or $k=0$, then $\Gamma^n_{i,j;\geq k}$ is a complete bipartite graph.
\item[$(iv)$] If $k=i+j+1-n$, then $\Gamma^n_{i,j;\geq k}$ is the bipartite  complement of $\Gamma^n_{i,j;k-1}$; if $i=k$, then $\Gamma^n_{i,j;\geq k}\cong\Gamma^n_{i,j;k}$.
\item[$(v)$] If $n=n'$,  $j'=n-j$ and $k'=i-k$, then $\Gamma^n_{i,j;k}\cong \Gamma^{n'}_{i,j';k'}$ and $\Gamma^n_{i,j;\geq k}$ is the bipartite complement of $\Gamma^{n'}_{i,j';\geq k'+1}$.
\item[$(vi)$] If $i,j\leq n/2$, $k<j$, $k\leq i/2$ if $j=n/2$,  $i',j'\leq n'/2$, $k\leq i'/2$ if $j'=n'/2$, then $\Gamma^n_{i,j;k}\cong\Gamma^{n'}_{i',j';k'}$ if and only if $(i,j,k,n)=(i',j',k',n')$. In this case every graph isomorphism is induced by a bijection from $\Omega^{(n)}$ to $\Omega^{(n')}$ (this is also trivially true for $(i,j,k)=(i',j',k')=(1,1,1)$), except if $(n,i,j,k)=(6,2,2,1)$, where one can compose with an arbitrary collineation belonging to $\mathsf{L}_4(2)$ acting on the points of $\PG(3,2)$, and except if $(n,i,j,k)=(4n^*-1,2n^*-1,2n^*-1,n^*-1)$, with $n^*\geq 2$ an integer, where the full automorphism group is $\mathsf{Sym}(4n^*)$ acting on the $(2n^*,2n^*)$-partitions of a $4n^*$-set. 
\item[$(vii)$] If $i,j\leq n/2$, $0\neq k<j$, $k\leq (i+1)/2$ if $j=n/2$, $i',j'\leq n'/2$, and $k\leq(i'+1)/2$ if $j'=n'/2$, then $\Gamma^n_{i,j;\geq k}\cong\Gamma^{n'}_{i',j';\geq k'}$ if and only if $(i,j,k,n)=(i',j',k',n')$. In this case every graph isomorphism is induced by a bijection from $\Omega^{(n)}$ to $\Omega^{(n')}$ (the latter is also trivially true for $(i,j,k)=(i',j',k')=(1,1,1)$). 
\end{itemize}
\end{theorem}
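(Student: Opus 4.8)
The plan is to dispose of $(i)$--$(v)$ by direct counting and then to reduce $(vi)$ and $(vii)$ to the classification of maximal subgroups of the symmetric and alternating groups in \cite{Lie-Pre-Sax:87}, exactly as in the finite case treated above, the only genuinely new feature being the exceptional families. For $(i)$--$(iv)$ one simply lists the attainable intersection sizes $\max\{0,i+j-n\}\le s\le\min\{i,j\}$: when $i=j=k$ adjacency means equality, when $i+j=n$ and $k=0$ the unique $j$-set disjoint from a given $i$-set is its complement, and the remaining matching, complement-of-matching, empty, complete-bipartite and bipartite-complement assertions are then immediate. The thin-specific statement $(v)$ is the heart of the bookkeeping: complementing the $j$-bipart, i.e.\ replacing each $j$-set $B$ by $\Omega\setminus B$, turns $|A\cap B|=k$ into $|A\cap(\Omega\setminus B)|=i-k$ and $|A\cap B|\ge k$ into $|A\cap(\Omega\setminus B)|\le i-k$, giving $\Gamma^n_{i,j;k}\cong\Gamma^n_{i,n-j;i-k}$ and realising $\Gamma^n_{i,j;\ge k}$ as the bipartite complement of $\Gamma^n_{i,n-j;\ge i-k+1}$. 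This complementation plays the role of the dualities of the Main Results and is exactly what forces the normalisations $i,j\le n/2$ in $(vi)$ and $(vii)$, together with $k\le i/2$ (resp.\ $k\le(i+1)/2$) when $j=n/2$: these select one representative from each complementation-orbit of parameter tuples, the bounds $i/2$ and $(i+1)/2$ being the fixed points of $k\mapsto i-k$ and $k\mapsto i-k+1$.

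For $(vi)$ and $(vii)$ I would first record the thin analogue of the distinguishing-neighbour lemma of the finite case: for distinct $j$-sets $v_1,v_2$ there are at least two $i$-sets adjacent to $v_1$ but not to $v_2$ (pick a point of $v_1\setminus v_2$ and, as in Lemma~\ref{choose}, two $i$-sets meeting $v_1$ in a $k$-set through that point while meeting $v_2$ in fewer than $k$ points). Thus distinct vertices have neighbourhoods differing in at least two elements; in particular neighbourhoods are injective, which also yields the epimorphism strengthening of Remark~\ref{surjections}. The natural automorphism group is $\mathsf{Sym}(n)$ acting diagonally on the biparts $\Omega_i,\Omega_j$, enlarged by the bipart swap when $i=j$ and by the complementation symmetries of $(iv)$--$(v)$ when the parameters are self-paired. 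Granting that $\Aut(\Gamma)$ is this natural group, the ``only if'' direction of $(vi)$ and $(vii)$ follows: the natural permutation action recovers $n$, the biparts are the $\mathsf{Sym}(n)$-orbits on $i$- and $j$-sets so that $i,j$ are read off from the cardinalities $\binom{n}{i},\binom{n}{j}$ (unambiguously, by the normalisations), and $k$ is the intersection size labelling the invariant adjacency relation.

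The substance is that, outside the exceptions, $\Aut(\Gamma)$ equals this natural group, and here I would copy the finite-case mechanism. If $g\in\Aut(\Gamma)$ induces on $\Omega_j$ an element of $\mathsf{Sym}(n)$, then after correcting by an element of $\mathsf{Sym}(n)$ we may assume $g$ fixes every $j$-set; injectivity of neighbourhoods then forces $g$ to fix every $i$-set, so $g\in\mathsf{Sym}(n)$. Hence if $\Aut(\Gamma)\neq\mathsf{Sym}(n)$ its image on $\Omega_j$ strictly contains $\mathsf{Sym}(n)$, and by \cite{Lie-Pre-Sax:87}, away from the exceptional containments listed there, $\mathsf{Sym}(n)$ on $j$-sets is maximal in $\mathsf{Sym}(\Omega_j)$ or $\mathsf{Alt}(\Omega_j)$, so $\Aut(\Gamma)$ induces at least $\mathsf{Alt}(\Omega_j)$ on that bipart. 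But then, for a vertex $v$ in the opposite bipart, a neighbour $w$ and a putative non-neighbour $w'$, the two surplus non-neighbours from the distinguishing lemma make it possible to choose an \emph{even} permutation of $\Omega_j$ fixing $\Gamma(v)\setminus\{w\}$ and sending $w\mapsto w'$; applied to $\Gamma$ it yields $v''=\theta(v)$ with $|\Gamma(v)\setminus\Gamma(v'')|=1$, impossible for distinct vertices by the distinguishing lemma, while $v''=v$ is impossible since $\Gamma(v)\neq\Gamma(v'')$. Therefore $v$ is adjacent to every vertex of $\Omega_j$, the graph is complete bipartite, contradicting nontriviality. This settles every non-exceptional tuple.

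The remaining step, which I expect to be the main obstacle, is to run through the exceptional containments of \cite{Lie-Pre-Sax:87} lying in our ranges and to decide, for each, whether the larger group actually preserves the adjacency in question; the answer differs between the exact and the ``at least'' relations, which is precisely why $(vi)$ has exceptions and $(vii)$ does not. For $(n,i,j,k)=(4n^*-1,2n^*-1,2n^*-1,n^*-1)$, adjoining a point $\infty$ identifies the $(2n^*-1)$-sets with the $(2n^*,2n^*)$-partitions of a $4n^*$-set via $S\mapsto\{S\cup\{\infty\},\Omega\setminus S\}$; a short computation shows that $|S\cap T|=n^*-1$ corresponds to all four cross-blocks of the two partitions having size $n^*$, a relation preserved by $\mathsf{Sym}(4n^*)\supsetneq\mathsf{Sym}(4n^*-1)$, whereas $|S\cap T|\ge n^*-1$ is not invariant under the swap $a\leftrightarrow 2n^*-2-a$ that the partition picture cannot distinguish, so it is \emph{not} preserved. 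For $(n,i,j,k)=(6,2,2,1)$ the two biparts of $2$-sets of a $6$-set can be identified with the points and the planes of $\PG(3,2)$ so that $|S\cap T|=1$ becomes point--plane non-incidence, which $\mathsf{L}_4(2)$ preserves (giving the stated extra collineations), while the degree count for $|S\cap T|\ge 1$ matches neither incidence nor non-incidence, so $\mathsf{L}_4(2)$ does not act. For the other small exceptions of \cite{Lie-Pre-Sax:87} in range one exhibits an adjacent pair sent to a non-adjacent pair, ruling out extra automorphisms. Collecting these verifications produces exactly the exception lists of $(vi)$ and $(vii)$ and, with the trivial tuple $(1,1,1)$ handled by inspection, completes the proof.
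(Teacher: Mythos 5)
Your overall strategy is the paper's strategy: items $(i)$--$(v)$ by direct counting and bipart complementation, and $(vi)$--$(vii)$ by reducing, via injectivity of neighbourhoods, to showing that the group induced on a bipart is $\mathsf{Sym}(n)$, invoking the maximality classification of \cite{Lie-Pre-Sax:87}, and killing the ``at least alternating'' alternative by the complete-bipartite contradiction. Your analysis of the two exceptional families also coincides with the paper's: the bijection between $(2n^*-1)$-sets of a $(4n^*-1)$-set and $(2n^*,2n^*)$-partitions, with exact intersection $n^*-1$ (all cross-blocks of size $n^*$) being $\mathsf{Sym}(4n^*)$-invariant while the threshold relation is not invariant under $m\mapsto 2n^*-2-m$; and the identification of $\Gamma^6_{2,2;1}$ with point--plane non-incidence in $\PG(3,2)$, on which $\mathsf{L}_4(2)$ acts.

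The genuine gap is your treatment of the remaining exceptional triples of \cite{Lie-Pre-Sax:87}: $(6,2,2)$ with $k=0$, $(10,3,3)$ with its four nontrivial adjacencies, and $(12,4,4)$ with its six. This is where the bulk of the paper's proof lives, and you dispose of it in one sentence (``one exhibits an adjacent pair sent to a non-adjacent pair''), which is a plan rather than a proof: executing it requires first extracting from \cite{Lie-Pre-Sax:87} the explicit exceptional overgroups in these degrees and then computing inside their actions, none of which you do. The paper avoids that entirely by an intrinsic reconstruction: common-neighbour counts are graph invariants, and in each of these cases they let one canonically rebuild a derived graph with non-exceptional parameters ($\Gamma^6_{2,3;2}$, $\Gamma^{10}_{3,2;2}$, $\Gamma^{12}_{3,4;3}$ respectively), whose automorphism group is $\mathsf{Sym}(n)$ by the already-settled cases. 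That genuine care is needed here is shown by $\Gamma^{10}_{3,3;1}$: the common-neighbour counts fail to separate intersection sizes $0$ and $2$, one only recovers the strongly regular graph with parameters $(120,63,30,36)$, and the paper must add a further argument using its size-$7$ maximal cliques (Fano-plane configurations) to recover disjointness; your ``short computation'' optimism is misplaced precisely at this point. A second, smaller issue: your distinguishing lemma (``at least two $i$-sets adjacent to $v_1$ but not to $v_2$''), with the proof you sketch from Lemma~\ref{choose}, fails in the boundary case $j=n/2$, $i=2k$ allowed by the hypotheses of $(vi)$ (e.g.\ $\Gamma^6_{2,3;1}$), where complementary $j$-sets have identical neighbourhoods and complementing that bipart is an automorphism not induced by any permutation of $\Omega$; you even notice these ``complementation symmetries'' of self-paired parameters, but never reconcile them with your lemma or with the conclusion you are proving. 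The paper's own proof makes the same tacit no-repeated-neighbourhood claim, so this defect is inherited from the source, but a complete write-up would have to exclude or list this boundary case explicitly.
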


\begin{proof}
The assertions $(i)$ up to $(v)$ are easy to check. We now prove $(vi)$ and $(vii)$.
As before, it suffices, under the given conditions, to reconstruct $(i,j,k,n)$ from the graphs $\Gamma^n_{i,j;k}$ and $\Gamma^n_{i,j;\geq k}$. 

Suppose we know that the automorphism group of $\Gamma^n_{i,j;k}$ or $\Gamma^n_{i,j;\geq k}$ is $\mathsf{Sym}(n)$. Then we already know $n$. From the sizes of the biparts we deduce $i$ and $j$, and then the bi-valence of the graph reveals $k$. Hence it suffices to show that the automorphism group of $\Gamma^n_{i,j;k}$ and $\Gamma^n_{i,j;\geq k}$ is $\mathsf{Sym}(n)$.

Noting that no two vertices of $\Gamma^n_{i,j;k}$ or $\Gamma^n_{i,j;\geq k}$ have exactly the same set of neighbors, it suffices to show that the automorphism group induced on one of the biparts is $\mathsf{Sym}(n)$. This follows immediately if $\mathsf{Sym}(n)$ is a maximal subgroup of the symmetric group acting on $\binom{n}{j}$ or $\binom{n}{i}$ letters. Now according to \cite{Lie-Pre-Sax:87}, this is the case, except possibly in the following cases (under the restrictions of $(vi)$ and $(vii)$ of Theorem~\ref{thethin}):
$$(n,i,j)\in\{(6,2,2),(10,3,3),(12,4,4),(2\ell-1,\ell-1,\ell-1):\ell\in\mathbb{N}, \ell\geq 3\}.$$
\begin{itemize}
\item $(n,i,j)=(6,2,2)$.

This is an easy case, as up to taking bipartite complement, we may suppose we have $\Gamma^6_{2,2;0}$ or $\Gamma^6_{2,2;1}$ (and we can distinguish these by their valences, which are 6 and 8, respectively). For $\Gamma^6_{2,2;0}$,  two vertices of the same bipart correspond to disjoint pairs if the have exactly one neighbor, and to intersecting pairs if the have three common neighbors. If we define a graph on one bipart by declaring two vertices adjacent if they have three common neighbors in $\Gamma^6_{2,2;0}$, then we can recover the $3$-subsets as the maximal cliques of size 3 of that graph. Hence we can derive $\Gamma^6_{2,3;2}$, which is not in our list of exceptions, and the result follows. For $\Gamma^6_{2,2;1}$, we note that the vertices of one bipart adjacent to a vertex of the other bipart can be considered as the points of the unique generalized quadrangle of order $2$ opposite (non-collinear to) a given point. In $\PG(3,2)$, these sets are just all complements of planes, hence $\mathsf{L}_4(2)$ acts on $\Gamma^6_{2,2;1}$, and $\mathsf{Sym}(6)$ is contained in it as $\mathsf{Sp}_4(2)$. 

\item $(n,i,j)=(10,3,3)$.

In the case there are essentially four different cases: $\Gamma^{10}_{3,3;k}$, for $k=0,1,2$, and $\Gamma^{10}_{3,3;\geq 2}$. These graphs can again be distinguished by their valences (these are 35, 63, 21 and 22, respectively). In the cases $k=0,2$, the number of common neighbors of two triples in the same bipart is equal to $4,10$ or $20$, and to $0,4$ or $8$, respectively, according to whether the triples have exactly no, exactly one, or exactly two points in common. Hence on each bipart the structure of $\Gamma^{10}_{3;2}$ can be recovered. The maximal cliques of size 8 in that graph correspond with pairs of $\Omega(10)$, and so we can uniquely build $\Gamma^{10}_{3,2;2}$, which is not in our list of possible exceptions. 

If $k=1$, an interesting phenomenon occurs: the graph on any bipart where two vertices are adjacent when they have exactly 30 neighbors is a strongly regular graph $(120,63,30,36)$. Hence we can only recover $\Gamma^{10}_{3;1}$. But the maximal cliques of size 7 of that graph clearly correspond to $7$-sets of $\Omega(10)$ (where they induce a Fano plane), and there is one vertex not adjacent to any seven of these vertices. Such a vertex corresponds to a triple which is disjoint from all triples in the maximal clique. So we can recover when two triples are disjoint, after all, and hence when they share 2 elements, too. The argument of the previous paragraph now applies. 

Concerning $\Gamma^{10}_{3,3;\geq 2}$, the number of common neighbors of two triples in the same bipart is equal to $0,4$ or $10$, respectively, according to whether the triples have exactly no, exactly one, or exactly two points in common. The same argument as above now applies. 

\item $(n,i,j)=(12,4,4)$.

Here are essentially six possibilities, namely $\Gamma^{12}_{4,4;k}$, $k=0,1,2,3$, and $\Gamma^{12}_{4,4;\geq \ell}$, $\ell=2,3$.  The valences again tell us these cases apart (the valences are 70, 224, 168, 32, 201 and 33, respectively). In the following table, where the rows and columns of the upper $4\times 4$ square are numbered from $0$ to $3$, the $(i,j)$-entry is the number of common neighbors of two vertices of the same bipart in $\Gamma^{12}_{4,4;i}$ which correspond to 4-sets of $\Omega(12)$ intersecting in a $j$-set. The two bottom rows tell us in the $j$th position exactly the same for the graphs $\Gamma^{12}_{4,4;\geq \ell}$, $\ell=2,3$. The labels are written on the left and the top and are self-explaining. 
\begin{center}
\begin{tabular}{c|cccc}
& 0 & 1 & 2 & 3 \\
\hline
0 & 1 & 5 & 15 & 35 \\
1 & 96 & 100 & 100 & 126\\
2 & 36 & 54 & 64 & 84\\
3 & 0 & 0 & 4 & 10\\
$\geq 2$ & 36 & 72 & 102 & 138\\
$\geq 3$ & 0 & 0 &  4 & 12
\end{tabular}
\end{center}

It follows from the last column of this table that in each bipart we can uniquely and unambiguously construct $\Gamma^{12}_{4;3}$ (the vertices are the $4$-sets of $\Omega^{(12)}$ and two $4$-sets are adjacent if they intersect in a $3$-set). It is easy to see that we can identify the $3$-subsets with the maximal cliques of $\Gamma^{12}_{4;3}$ consisting of 9 vertices. Hence we can build $\Gamma^{12}_{3,4;3}$, which is not in the list of possible exceptions. 

\item $(n,i,j)=(2\ell-1,\ell-1,\ell-1)$, $\ell\in\mathbb{N}, \ell\geq 3$.

In this case, it follows from Table III of  \cite{Lie-Pre-Sax:87} that the only way in which $\mathsf{Sym}(2\ell-1)$ is not the full automorphism group of $\Gamma^n_{i,j;k}$ or $\Gamma^n_{i,j;\geq k}$ is when $\mathsf{Sym}(2\ell)$ is the full automorphism group and we can represent the vertices of our graph as the $(\ell,\ell)$-partitions of $\Omega^{(2\ell)}$, with ``induced'' adjacency. We now take a closer look at this induced adjacency. Suppose two $(\ell-1)$-sets of $\Omega^{(2\ell-1)}$ intersect in a set of size $m$. Then the partition classes of the corresponding $(\ell,\ell)$-partitions of $\Omega^{(2\ell)}$ intersect in sets of sizes $m+1$ and $\ell-m-1$. Conversely, if   the partition classes of two $(\ell,\ell)$-partitions of $\Omega^{(2\ell)}$ intersect in sets of sizes $m+1$ and $\ell-m-1$, then the corresponding $(\ell-1)$-sets of $\Omega^{(2\ell-1)}$ intersect in a set of size either $m$ or $\ell-m-2$. 

From this we easily deduce that the automorphism group of a graph $\Gamma^n_{i,j;k}$ or $\Gamma^n_{i,j;\geq k}$ is $\mathsf{Sym}(2\ell)$ if and only if whenever two vertices are adjacent as soon as the corresponding subsets intersect in a set of size $m$, then the two vertices corresponding to subsets intersecting in a set of size $\ell-m-2$ are also adjacent. This is never the case for $\Gamma^n_{i,j;\geq k}$ and only the case for $\Gamma^n_{i,j;k}$ if $k=\ell-k-2$; hence if $k=(\ell-2)/2$.  
\end{itemize}
This completes the proof of the theorem.
\end{proof}

Similarly to Corollaries~\ref{th1} and~\ref{th2}, one has also a corollary to Theorem~\ref{thethin}. We will not explicitly mention it, but the reader can easily state it for himself. We content ourselves by mentioning that all automorphisms of the graphs $\Gamma^n_{j;\geq k}$ where vertices are the $j$-subsets of $\Omega^{(n)}$, adjacent if they share at least $k$ elements ($1\leq k<j\leq n-1$) and $\Gamma^n_{j;k}$ with same vertex set but adjacency now defined as the intersection exactly being a $k$-subset, are induced by ordinary permutations of $\Omega^{(n)}$, except that the automorphism group of $\Gamma^{(4n^*-1)}_{2n^*-1;n^*-1}$ is $\mathsf{Sym}(4n^*)$.  The graph $\Gamma^6_{2;1}$ is \emph{not} an exception here because we can consider the extended bipartite double, which is the bipartite complement of $\Gamma^6_{2,2;0}$, and the latter has no exceptional behavior. Note, however, that the ordinary bipartite double of $\Gamma^6_{2;1}$ is $\Gamma^6_{2,2;1}$, and this pair constitutes an example of a connected non-bipartite graph whose automorphism group is much smaller than that of its bipartite double (usually the size of the automorphism group of the bipartite double is just twice the size of the original graph). 

\subsection{The infinite thin case}
Finally, we take a look at the analogue of the thin case for an infinite set. 
Let $0\leq k\leq i\leq j$, with $k<j$ be integers and let $\Omega$ be any infinite set. Let the graph $\Gamma^{\Omega}_{i,j;k}$ be the bipartite graph with vertices the $i$-subsets and the $j$-subsets of $\Omega$, where an $i$-set is adjacent to a $j$-set if they intersect in a $k$-set. Similarly we define $\Gamma^{\Omega}_{i,j;\geq k}$ in the obvious way, and also $\Gamma^\Omega_{j;k}$ and $\Gamma^\Omega_{j;\geq k}$. 

We first show the following.

\begin{prop}\label{prop1}
The elements of $\Omega$ are recoverable from $\Gamma^{\Omega}_{j;k}$, $0< k<j$. In particular, every graph automorphism is induced by a permutation of $\Omega$.
\end{prop}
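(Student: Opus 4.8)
The plan is to reconstruct, in a graph-automorphism-invariant way, the \emph{Johnson graph} $J(\Omega,j)$ on the $j$-sets of $\Omega$ -- the graph in which two $j$-sets are adjacent precisely when they meet in a $(j-1)$-set -- and then to peel the elements of $\Omega$ off this Johnson graph by a downward induction. The induction is the structural, routine part, and here infiniteness does the work. In $J(\Omega,m)$, for $2\leq m\leq j$, there are two kinds of maximal clique: the \emph{stars} consisting of all $m$-sets through a fixed $(m-1)$-set, and the cliques consisting of all $m$-subsets of a fixed $(m+1)$-set. The former are infinite while the latter have size exactly $m+1$; hence the infinite maximal cliques are exactly the stars, and each such clique is in natural bijection with an $(m-1)$-set (namely the common part of its members). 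Declaring two of these infinite maximal cliques adjacent when they share a common vertex of $J(\Omega,m)$ reproduces $J(\Omega,m-1)$, since two distinct $(m-1)$-sets lie in a common $m$-set if and only if they meet in an $(m-2)$-set. Iterating down to $J(\Omega,2)$, the infinite maximal cliques are exactly the $1$-sets, i.e.\ the elements of $\Omega$; and the memberships ``$\omega\in A$'' are carried along the chain. Thus recovering $J(\Omega,j)$ suffices.

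The crux, and the main obstacle, is to produce the Johnson adjacency (``meet in a $(j-1)$-set'', i.e.\ the relation $|A\cap B|=j-1$) from the relation we are actually given, namely $|A\cap B|=k$ with $0<k<j$. If $k=j-1$ there is nothing to do. In general the plan is to show that the meet-in-$k$ relation determines, for every $m$ with $k\leq m\leq j$, the relation $|A\cap B|=m$, and in particular $|A\cap B|=j-1$. Concretely I would analyse the infinite maximal cliques of $\Gamma^\Omega_{j;k}$: these are the \emph{sunflowers} with a $k$-set core $Y$ and pairwise disjoint petals of size $j-k$, so that any two members meet exactly in $Y$ (that these, and only these, are the infinite maximal cliques is a $\Delta$-system statement that must be verified and uses $0<k<j$). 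For a pair $\{A,B\}$ one then studies how the common $\Gamma$-neighbours, i.e.\ the $j$-sets meeting both $A$ and $B$ in exactly $k$ elements, distribute over sunflower cliques; the combinatorial type of this distribution is sensitive to $|A\cap B|$ and in particular singles out the value $j-1$, at which every common neighbour either contains or avoids both of the two ``difference points''. This is the infinite-set replacement for the common-neighbour \emph{counts} that settle the finite thin case in Theorem~\ref{thethin}, and it is where the bulk of the effort lies; the selection arguments needed (``there is a $j$-set meeting prescribed sets in prescribed small amounts'') are the easy thin analogues of Lemma~\ref{choose}, and the hypothesis $k\geq 1$, guaranteeing that adjacent $j$-sets already share a point, streamlines the sunflower analysis.

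Finally, since $\Omega$ together with the incidence relation ``$\omega\in A$'' between elements of $\Omega$ and $j$-sets has been reconstructed by purely graph-theoretic (hence automorphism-invariant) operations, any automorphism of $\Gamma^\Omega_{j;k}$ permutes the recovered set $\Omega$ and acts on the $j$-sets compatibly with this permutation. As no two distinct $j$-sets have the same set of neighbours, this action is faithful and determines the automorphism, so every graph automorphism is induced by a permutation of $\Omega$, as claimed.
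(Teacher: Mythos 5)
Your overall architecture rests on the same key fact as the paper's proof --- that an infinite clique of $\Gamma^{\Omega}_{j;k}$ is a sunflower, i.e.\ all pairwise intersections of its members coincide in a fixed $k$-set --- but the step you yourself identify as the crux is only asserted, and as written it contains a genuine gap. First, unlike the Johnson-graph situation (where petals are singletons, so the infinite maximal cliques biject with the $(m-1)$-sets), in $\Gamma^{\Omega}_{j;k}$ with $k<j-1$ there are many distinct maximal sunflowers with the same $k$-set core (any maximal family of pairwise disjoint $(j-k)$-petals yields one). So ``how the common neighbours distribute over sunflower cliques'' is not yet a usable invariant: you must first group the sunflowers by core, which the paper does by taking the transitive closure of the relation ``share at least two vertices'' among infinite cliques; the classes then biject with the $k$-sets. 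Second, your proposed criterion for $|A\cap B|=j-1$ --- that every common neighbour contains or avoids both ``difference points'' --- is stated in terms of the two difference points, which are not graph-theoretic data at that stage; you give neither an invariant formulation of this property nor an argument that no smaller intersection size satisfies an analogous one. Consequently the reduction to the Johnson graph $J(\Omega,j)$, on which everything else in your proof depends, is not established.

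The gap is fillable, and the grouping step essentially finishes it: once the sunflower classes (equivalently, the $k$-sets together with their inclusion into the $j$-sets) are recovered, two distinct $j$-sets $A,B$ with $|A\cap B|=m\geq k$ have exactly $\binom{m}{k}$ common $k$-set neighbours, a finite quantity strictly increasing in $m$; hence Johnson adjacency is precisely ``maximal number of common $k$-set neighbours among distinct vertices of a bipart'', after which your downward induction through $J(\Omega,m)$ (which is correct as written) goes through. For comparison, the paper routes differently after the grouping step: it identifies the $(k+1)$-sets as the common-neighbour sets of minimal size greater than $1$ (namely $k+1$), reconstructs the inclusion graph of $k$-sets versus $(k+1)$-sets, notes that distance $2k$ there encodes ``meet in exactly one element'', and then reapplies the sunflower argument to the resulting graph $\Gamma^{\Omega}_{k+1;1}$ to reach the elements. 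Either completion works, but as it stands your proposal defers exactly the part that requires proof.
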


\begin{proof}
We first reconstruct the $k$-subsets of $\Omega$. We  claim that an infinite clique in the graph corresponds to an infinite number of $j$-subsets containing a fixed $k$-subset. 

Indeed, let $C$ be an infinite clique and suppose by way of contradiction that there are three members of the clique, say $J_1,J_2,J_3$ with $J_1\cap J_2\neq J_2\cap J_3$. Let $J$ be an arbitrary member of the clique distinct from $J_1,J_2,J_3$. If $|J\cap(J_1\cup J_2\cup J_3)|=k$, then clearly this $k$-subset must be contained in $J_1\cap J_2\cap J_3$, a contradiction. Hence $|J\cap(J_1\cup J_2\cup J_3)|>k$. We now have infinitely many choices for $J$ and only a finite number of choices (bounded by $2^{3j}$) for the intersection $J\cap(J_1\cup J_2\cup J_3)$. It follows that there are two $j$-subsets $J,J'$ in the clique intersecting $J_1\cap J_2\cap J_3$ in the same set. But then $|J\cap J'|>k$, a contradiction. 

Now the transitive closure of the relation ``shares at least two members'' among the infinite cliques of the graph defines an equivalence relation whose classes are in natural bijective correspondence with the $k$-subsets. Hence we can reconstruct the bipartite graph $\Gamma^\Omega_{k,j}$ of   $k$-sets and $j$-sets, where adjacency is given by symmetrized inclusion. Note that the valence of a vertex corresponding to a $j$-subset is $\binom{j}{k}$.

We can recognize the $(k+1)$-subsets as the sets with minimal $>1$  number of common neighbors of two $j$-subsets (and then they have exactly $k+1$ common neighbors). This way we determine $k$, and since we know $\binom{j}{k}$ already, we also know $j$. But we also recognize $\Gamma^\Omega_{k+1,k}$. Now, two vertices representing $(k+1)$-subsets are at distance $2k$ from one another if and only if they intersect in a single element. This relation defines $\Gamma^\Omega_{k+1;1}$. As in the beginning of this proof, we can now reconstruct $\Gamma^\Omega_{k+1,1}$, telling us exactly which elements are contained in each $(k+1)$-set.

This proves the proposition. 
\end{proof}

Now let $0\leq k\leq i\leq j$, with $k<j$ and let $\Gamma$ be either isomorphic to $\Gamma^{\Omega}_{i,j;k}$ or isomorphic to $\Gamma^{\Omega}_{i,j;\geq k}$ (where $\Omega$ is an infinite set of any cardinality). Let $
\cA$ be an infinite set of vertices belonging to the same bipart of $\Gamma$. Then we say that $
\cA$ is an \emph{$\ell$-star}, $\ell\in\mathbb{N}\setminus\{0\}$, if it satisfies the following properties.
\begin{itemize}
\item[(S1)] If $v$ is a vertex belonging to the other bipart, then $v$ is either adjacent to exactly $\ell+1$ members of $\cA$, or not adjacent to exactly $\ell$ members of $\cA$, or adjacent to no members of $\cA$.
\item[(S2)] For every subset $A\subset\cA$ containing exactly $\ell+1$ elements, there exists a finite nonzero number of vertices $v$ of $\Gamma$ such that $v$ is adjacent to every member of $A$ and not adjacent to every member of $\cA\setminus A$.  
\item[(S3)] For every subset $B\subset\cA$ containing exactly $\ell$ elements, there exists a finite nonzero number of vertices $v$ of $\Gamma$ such that $v$ is not adjacent to every member of $B$ and adjacent to every member of $\cA\setminus B$.  
\end{itemize}
Also, we say that $\cA$ is an \emph{$\ell$-flower}, $\ell\in\mathbb{N}\setminus\{0\}$, if it satisfies the following properties.
\begin{itemize}
\item[(F1)] If $v$ is a vertex belonging to the other bipart, then $v$ is either adjacent to exactly $\ell+1$ members of $\cA$, or adjacent to all members of $\cA$, or adjacent to no members of $\cA$.
\item[(F2)] For every subset $A\subset\cA$ containing exactly $\ell+1$ elements, there exists a finite nonzero number of vertices $v$ of $\Gamma$ such that $v$ is adjacent to every member of $A$ and not adjacent to every member of $\cA\setminus A$.  
\item[(F3)] There are infinitely many vertices adjacent to all members of $\cA$.
\end{itemize}
Finally, $\cA$ will be called an \emph{$i$-regular star} if it consists of the vertices corresponding to all $i$-sets of $\Omega$  containing a fixed $(i-1)$-set. Similarly for a $j$-regular star. If $i=1$, then an $i$-regular star coincides with $\Omega$.

Basically, we want to show that the graphs $\Gamma^{\Omega}_{i,j;k}$ contain unique $\ell$-stars, with $\ell\in\{j-k,i-k\}$, which are either $i$-regular or $j$-regular stars. Moreover these graphs do not contain any $\ell$-flowers. Also, we want to show that  the graphs $\Gamma^{\Omega}_{i,j;\geq k}$ contain unique $\ell$-flowers, with $\ell\in\{j-k,i-k\}$, which are either $i$-regular or $j$-regular stars. Moreover these graphs do not contain any $\ell$-stars. But first we isolate some more-or-less trivial cases. 

Given $\Gamma$ (with $k\leq i\leq j$, $k<j$), we can easily decide whether $k=i$ or not; indeed, $k=i$ if and only if the valence of the vertices of one bipart is finite. In this case, we make a new graph by throwing away the bipart with infinite valence and declare two remaining vertices adjacent when they have a unique common neighbor in $\Gamma$. The graph thus obtained is clearly isomorphic to $\Gamma^\Omega_{j;i}$, and we can apply Proposition~\ref{prop1}. Hence every graph automorphism of  $\Gamma^{\Omega}_{i,j;\geq i}\cong \Gamma^{\Omega}_{i,j;i}$ is induced by a permutation of $\Omega$.

Also, one checks that the only cases in which there are vertices with finite covalence (meaning that there are only a finite number of vertices of the other bipart not adjacent to a vertex of one bipart) are the cases of the graphs $\Gamma^\Omega_{1,j;0}$, $j\geq 1$. But in these cases, clearly every graph automorphism of the bipartite complement is induced by a permutation of $\Omega$. 

So from now on, we may assume that no vertex has finite valence nor finite covalence. 

Finally, we can isolate the graphs $\Gamma\cong\Gamma^\Omega_{i,j;0}$ (case $k=0$) from the rest as follows.

\begin{lemma}\label{complement}
In each of the graphs $\Gamma^\Omega_{i,j;k}$ and $\Gamma^\Omega_{i,j;\geq k}$ with $k>0$, there exists in each bipart a finite set of vertices not adjacent with a common vertex. But in the graph $\Gamma^\Omega_{i,j;0}$ each finite set of vertices contained in one of the biparts is adjacent to some vertex.   
\end{lemma}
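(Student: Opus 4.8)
The plan is to treat the two families of graphs uniformly and to split the argument according to whether $k>0$ or $k=0$. For the positive case I would exhibit a finite witnessing set built from pairwise disjoint subsets of $\Omega$, and for $k=0$ I would use the elementary fact that the complement in the infinite set $\Omega$ of a finite union of finite subsets is again infinite (indeed cofinite), so that a disjoint subset of the required size is always available.

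First I would handle $k>0$ in the bipart of $j$-sets. Since $\Omega$ is infinite, choose $m$ pairwise disjoint $j$-sets $J_1,\dots,J_m$ with $m>i/k$. If some $i$-set $I$ were a common neighbor in either $\Gamma^\Omega_{i,j;k}$ or $\Gamma^\Omega_{i,j;\geq k}$, then adjacency would give $|I\cap J_a|\geq k$ for every $a$, and since the $J_a$ are pairwise disjoint we would obtain $i=|I|\geq\sum_{a=1}^{m}|I\cap J_a|\geq mk>i$, a contradiction. Hence $\{J_1,\dots,J_m\}$ has no common neighbor. The same argument with the roles of $i$ and $j$ interchanged — that is, $m$ pairwise disjoint $i$-sets with $m>j/k$ — produces a finite witnessing set in the bipart of $i$-sets. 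This is precisely where the hypothesis $k>0$ is used: it forces the total intersection size $mk$ to exceed the fixed bound $i$ (respectively $j$) once $m$ is taken large enough, whereas for $k=0$ the quantity $mk$ stays $0$ and the argument collapses.

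For $k=0$, adjacency in $\Gamma^\Omega_{i,j;0}$ is exactly disjointness. Given any finite family of $j$-sets $J_1,\dots,J_m$, the union $U=J_1\cup\cdots\cup J_m$ is finite, so $\Omega\setminus U$ is infinite and contains some $i$-set $I$; this $I$ is disjoint from every $J_a$ and is therefore a common neighbor. The symmetric statement for a finite family of $i$-sets is identical, interchanging the roles of $i$ and $j$.

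I expect essentially no serious obstacle here; the only point that requires a little care is to produce a single argument valid simultaneously for the exact graph $\Gamma^\Omega_{i,j;k}$ and the threshold graph $\Gamma^\Omega_{i,j;\geq k}$. This is achieved by the observation that, in either graph, adjacency of an $i$-set $I$ to a $j$-set $J_a$ implies $|I\cap J_a|\geq k$, and it is this one inequality — not the exact value of the intersection — on which the whole counting argument rests.
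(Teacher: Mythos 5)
Your proof is correct and follows essentially the same route as the paper's: for $k>0$ both arguments take enough pairwise disjoint sets in one bipart so that a counting bound on the size of a would-be common neighbor gives a contradiction, and for $k=0$ both use that the complement of a finite union of finite sets in the infinite $\Omega$ still contains a set of the required size. The only cosmetic difference is that the paper takes $j+1$ disjoint $i$-sets and counts nontrivial intersections (at least one element each), while you take $m>i/k$ disjoint $j$-sets and count $k$ elements per intersection; the two counts are interchangeable.
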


\begin{proof}
In the bipart of the $i$-subsets one can consider a set $S$ of $j+1$ disjoint $i$-subsets. Then an arbitrary $j$-subset can intersect at most $j$ members of $S$ nontrivially. Hence, if $k>0$, no vertex is adjacent to all members of $S$. The same thing is true for $i$ and $j$ interchanged. However, if we consider any finite number of $i$-subsets, then we can always find a $j$-subset disjoint from all these $i$-subsets. And the same thing holds for $i$ and $j$ interchanged. This proves the lemma. 
\end{proof}

If $k=0$, then we consider the complement of the graph $\Gamma^\Omega_{i,j;0}$, which is $\Gamma^\Omega_{i,j;\geq 1}$. So we may assume $k>0$ from now on.

\begin{prop}\label{propS}
If $\Gamma\cong\Gamma^\Omega_{i,j;k}$, with $1\leq k<i\leq j$, then every $\ell$-star, $\ell\geq 0$, is either a $j$-regular star, or an $i$-regular star (and both occur), and $\ell=i-k$ or $\ell=j-k$, respectively. Also, $\Gamma$ does not contain $\ell$-flowers, for any natural $\ell$.  
\end{prop}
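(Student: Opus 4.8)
The plan is to treat the three assertions — that the regular stars are $\ell$-stars with the stated $\ell$, that these are the \emph{only} $\ell$-stars, and that no $\ell$-flowers exist — in turn, always assuming (after interchanging $i$ and $j$ if necessary) that $\cA$ consists of $j$-sets, so that the ``other bipart'' in (S1)--(F3) consists of $i$-sets.

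For the sufficiency, I would take the $j$-regular star with base $(j-1)$-set $S$, whose members are the sets $S\cup\{x\}$, $x\in\Omega\setminus S$, and classify each $i$-set $I$ by $s=|I\cap S|$. Since $|I\cap(S\cup\{x\})|=s+[x\in I]$, the case $s=k$ gives an $I$ non-adjacent to exactly the $i-k$ members with $x\in I\setminus S$, the case $s=k-1$ gives an $I$ adjacent to exactly the $i-k+1$ such members, and all other values of $s$ give an $I$ adjacent to none; this is (S1) with $\ell=i-k$. Counting then yields (S2) and (S3): the relevant witnesses are obtained by fixing the forced points $I\setminus S$ and letting $I\cap S$ range over the $(k-1)$- resp.\ $k$-subsets of $S$, giving the finite nonzero counts $\binom{j-1}{k-1}$ and $\binom{j-1}{k}$. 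By the $i\leftrightarrow j$ symmetry, $i$-regular stars are $(j-k)$-stars, so both types occur.

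I would next dispose of the \emph{flowers}. Suppose $\cA$ were an $\ell$-flower of $j$-sets. By (F3) fix an $i$-set $w$ adjacent to all of $\cA$; as $w$ has only $\binom{i}{k}$ subsets of size $k$, some $k$-set $K\subseteq w$ is the trace $w\cap J$ for an infinite subfamily $\cB$, whose members all contain $K$ and are disjoint from $w\setminus K$. Choose $p\in w\setminus K$ (nonempty since $i>k$), so $p\notin J$ for $J\in\cB$, and fix $J_0\in\cB$. For a point $r\in J_0\setminus K$ we have $r\notin w$, and the $i$-set $u=(w\setminus\{p\})\cup\{r\}$ satisfies $u\cap J=K\cup(\{r\}\cap J)$ for every $J\in\cB$, so $u$ is adjacent to $J$ exactly when $r\notin J$. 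If some such $r$ lies outside infinitely many members of $\cB$, then $u$ is adjacent to infinitely many members of $\cA$ but not to $J_0$, which is none of the three options in (F1) — a contradiction. Otherwise every $r\in J_0\setminus K$ lies in all but finitely many members of $\cB$, so a cofinite subfamily contains $K\cup(J_0\setminus K)=J_0$ and hence equals $J_0$, contradicting that $\cB$ is infinite. Thus $\Gamma^\Omega_{i,j;k}$ has no $\ell$-flowers.

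Finally, for the converse I would show that an arbitrary $\ell$-star $\cA$ of $j$-sets is a $j$-regular star. First $\ell\geq1$, since an $\ell=0$ star would, via (S3) with $B=\varnothing$, produce an $i$-set adjacent to all of the infinite family $\cA$, impossible by the trace argument above. Now apply (S3) to get a co-witness $v$ adjacent to all but exactly $\ell$ members, and pigeonhole its traces to obtain an infinite subfamily $\cA_K$ on which $v$ has constant trace $K$ (so every member contains $K$ and avoids the $\ell$-set $v\setminus K$). The same swap $u=(v\setminus\{p\})\cup\{r\}$, $p\in v\setminus K$, now shows that \emph{no} point $r$ can lie in infinitely-many-but-not-cofinitely-many members of $\cA_K$: otherwise $u$ would be adjacent to infinitely many and non-adjacent to infinitely many members of $\cA$, contradicting that (S1) only permits non-adjacency to the finite number $\ell$ once adjacency is infinite. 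Hence every point has finite or cofinite multiplicity in $\cA_K$; the cofinite ones form a common core $S^*\supseteq K$ with $|S^*|\leq j-1$ (equality $j$ forcing the members to coincide). It then remains to prove $|S^*|=j-1$ — ruling out ``large-petal'' configurations by exhibiting an $i$-set adjacent to all of an infinite subfamily, which (S1) forbids — and to promote the resulting structure $\{S^*\cup\{x\}\}$ from $\cA_K$ to all of $\cA$, after which (S1) forces the point set to be all of $\Omega\setminus S^*$, i.e.\ $\cA$ is the full $j$-regular star. I expect this last global step to be the main obstacle: the co-witness and the pigeonhole only control an infinite subfamily, and turning the local sunflower picture into the rigid statement that \emph{every} member of $\cA$ is of the form $S^*\cup\{x\}$ requires simultaneously controlling the adjacency of the perturbed $i$-sets to the members lying outside $\cA_K$, which is where the careful bookkeeping is concentrated.
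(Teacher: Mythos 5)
Your proposal is sound in its first two parts but stops short of the actual content of the proposition in the third. The verification that $j$-regular and $i$-regular stars are $(i-k)$- and $(j-k)$-stars is correct (the paper leaves this implicit), and your no-flower argument --- pigeonholing the traces of a universal witness $w$ from (F3) and swapping one point, $u=(w\setminus\{p\})\cup\{r\}$ --- is correct and genuinely different from the paper's, which instead builds, inside a maximal-core subfamily $\cA_X$, a ``sunflower'' $A_{i-k}$ of members pairwise disjoint outside the core $X$ and contradicts (F1) with an $i$-set meeting $X$ in $k$ points and each member of $A_{i-k}$ in $k+1$ points. But for the central implication --- an arbitrary $\ell$-star of $j$-sets is a $j$-regular star with $\ell=i-k$ --- you only construct the core $S^{*}\supseteq K$ of points of cofinite multiplicity and then explicitly defer the two decisive steps: that $|S^{*}|=j-1$, and that \emph{every} member of $\cA$ (not just those in $\cA_K$) has the form $S^{*}\cup\{x\}$; you also never determine $\ell$. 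That deferred part is where the paper's proof does all its work: it enlarges $K$ to a \emph{maximal} core $X$ (so every point outside $X$ has finite multiplicity $n_p$), extracts sunflower subfamilies $A_m\subseteq\cA_X$, gets $\ell\geq i-k$ from $A_{i-k+1}$ via an $i$-set meeting $X$ in $k-1$ points and each petal once, then applies (S3) to the specific set $B=A_\ell$: the resulting witness meets $X$ in exactly $k$ points and must distribute its $i-k$ points outside $X$ over $\ell$ pairwise disjoint petals, forcing $\ell\leq i-k$, and varying that witness pins $n_p\in\{0,1\}$. None of this machinery appears in your proposal.

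Moreover, the one idea you do sketch for the missing step is based on a misreading: you would rule out ``large petals'' by ``exhibiting an $i$-set adjacent to all of an infinite subfamily, which (S1) forbids'', but (S1) forbids no such thing --- a vertex non-adjacent to exactly $\ell$ members is adjacent to an infinite subfamily, and such vertices are precisely what (S3) supplies. In fact your suspicion that this is where the real difficulty lives is correct in a strong sense: for $(k,i)=(1,2)$ and $j\geq 3$ the step cannot be carried out at all. Fix $x\in\Omega$, partition $\Omega\setminus\{x\}$ into pairs $P_m$, and let $\cA=\{\{x\}\cup P_m\}$ in $\Gamma^{\Omega}_{2,3;1}$: a $2$-set containing $x$ is non-adjacent to exactly one member and adjacent to all others; a $2$-set equal to some $P_m$ is adjacent to none; any other $2$-set is adjacent to exactly two members; and the finite counts in (S2) and (S3) are $4$ and $2$. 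So $\cA$ satisfies (S1)--(S3) with $\ell=1=i-k$, yet it is neither a $j$-regular nor an $i$-regular star, i.e.\ your ``large-petal'' configurations are genuine $\ell$-stars for these parameters. (The paper's own endgame glosses over exactly this point: it tacitly assumes, for $J\in\cA\setminus\cA_X$, that $X\not\subseteq J$.) So the gap you left is not bookkeeping; as stated, neither your plan nor any other can close it without excluding the case $(k,i)=(1,2)$.
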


\begin{proof}
To fix the ideas, we assume that $\cA$ is an $\ell$-star or $\ell$-flower contained in the bipart corresponding to the $j$-sets. We will cease to assume $i\leq j$ in order to treat the case of $i$-sets at the same time (but we do assume $k<j$). 

By assumptions (S3) and (F3), there is an $i$-set $I$ adjacent to every member of  an infinite subset $\cA_0$ of $\cA$. Since $k>0$, since every member of $\cA_0$ intersects $I$ in exactly $k$ elements of $\Omega$, and since $I$ contains a finite number of $k$-subsets, there must be some $k$-subset $K\subseteq I$ contained in every member of an infinite subset $\cA_K\subseteq\cA_0$. Let $X\supseteq K$ be a set with the property that the set $\cA_X$ of elements of $\cA_K$ that contain $X$ has infinitely many elements. Clearly $|X|\leq j-1$, and $K$ has this property. Hence we can pick a maximal set $X$ with that property. It follows that for any $p\in\Omega\setminus X$, the number of elements of $\cA_X$ containing $p$ is a finite number $n_p$. An easy consequence of that fact is that for every finite subset $C\subseteq\Omega\setminus X$, there exist infinitely many members of $\cA_X$ missing $C$, and only a finite number of members of $\cA_X$ meet $C$ nontrivially. Hence a straightforward inductive argument implies that  for any positive integer $m$, we can find a subset $A_m\subseteq\cA_X$ of cardinality $m$ such that no member of $\cA_X$ intersects two members of $A_m$ nontrivially outside $X$, and such that $A_m$ contains an arbitrary given member of $\cA_X$.

We consider $A_{i-k+1}$ and consider an $i$-set $I$ such that $|I\cap X|=k-1$ and $|I\cap J|=k$, for every $J\in A$. Then clearly, $I$ is not adjacent with infinitely many members of $\cA_X$, and adjacent with at least $i-k+1\neq0$ members of  $\cA_X$. It follows from (S1) and (F1) that $I$ is adjacent with exactly $\ell+1$ members of $\cA$, hence with at most $\ell+1$ members of $\cA_X$. This implies $i-k\leq \ell$. 

Now we consider $A_{i-k}$ and consider an $i$-set $I'$ such that $|I'\cap X|=k$ and $|I'\cap J|=k+1$, for every $J\in A$. Then clearly, $I'$ is adjacent with infinitely many members of $\cA_X$, and not adjacent with at least $i-k\neq 0$ members of $\cA_X$. This contradicts (F1), hence $\Gamma$ cannot contain an $\ell$-flower. 

Consider $A_{\ell}$. Then, by (S3), there is an $i$-set $I''$ not adjacent to all members of $A_{\ell}$ and adjacent to everything else. It is easy to see that the latter implies $|I''\cap X|=k$. The former implies that we can distribute the $i-k$ elements of $I''\setminus X$ among the members of $A_\ell$ such that every member gets at least one element. Hence $i-k\geq\ell$, which yields $i-k=\ell$. Also, it now follows that $I''$ contains exactly one element in $J\setminus X$, for each $J\in A_\ell$. Varying this element over $J\setminus X$ (leading to different $i$-sets not adjacent with exactly the same set of members of $\cA$), we see that for each point $p\in J\setminus X$, we have $n_p=1$. Since we could choose one of the elements of $A_\ell$ completely arbitrarily, we conclude that $n_p\in\{0,1\}$, for all $p\in\Omega\setminus X$. 

If $n_p=1$,  for all $p\in\Omega\setminus X$, then $\cA_X$ is an $(i-k)$-star and a $j$-regular star, and this implies easily $\cA=\cA_X$, which proves the assertion.  

So we may assume that there exists a point $p\in\Omega\setminus X$ with $n_p=0$. Then clearly $\cA\neq\cA_X$ as we can easily produce an $i$-set intersecting $X$ in a $(k-1)$-set, containing a point $p$ with $n_p=0$; and containing a point $q$ with $n_q=1$ (this is possible since $i\geq k+1$); this $i$-set is adjacent to at least one and at most $i-k$ vertices of $\cA_X$. 

So let $J\in\cA\setminus\cA_X$. Then there is a point $x\in X\setminus J$. We choose a $k$-set $K$ in $X$ containing $x$. Then we choose $i-k$ members of $\cA_X$ disjoint from $J$. In each of these members, we choose a point not in $X$. The union of these points with $K$ is an $i$-set $I^*$ which   is not adjacent with exactly $i-k$ members of $\cA_X$; but $I^*$ is not adjacent with $J$, too, a contradiction. 

The assertion is proved.  
\end{proof}

\begin{prop}\label{propF}
If $\Gamma\cong\Gamma^\Omega_{i,j;\geq k}$, with $1\leq k<i\leq j$, then every $\ell$-flower, $\ell\geq 1$, is either a $j$-regular star, or an $i$-regular star (and both occur), and $\ell=i-k$ or $\ell=j-k$, respectively. Also, $\Gamma$ does not contain $\ell$-stars, for any natural $\ell$.  
\end{prop}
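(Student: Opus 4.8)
The plan is to retrace the proof of Proposition~\ref{propS} almost verbatim, interchanging the roles of \emph{stars} and \emph{flowers} and replacing throughout ``meet in exactly a $k$-set'' by ``meet in at least a $k$-set''. As there I would not assume $i\leq j$, so that the $j$-regular/$\ell=i-k$ and the $i$-regular/$\ell=j-k$ alternatives follow from one another under the symmetry $i\leftrightarrow j$. First I record the (easy) converse: a $j$-regular star, with fixed $(j-1)$-set $Y$, is an $(i-k)$-flower, since an $i$-set $I$ is adjacent to \emph{all} of its members when $|I\cap Y|\geq k$, to exactly $i-k+1=\ell+1$ of them when $|I\cap Y|=k-1$, and to none when $|I\cap Y|\leq k-2$; in particular it satisfies (F1)--(F3) but fails (S1). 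Assuming now that $\cA$ is an $\ell$-flower or an $\ell$-star among the $j$-sets, I extract exactly as in Proposition~\ref{propS} a maximal $X$ (containing a fixed $k$-set, with $|X|\leq j-1$) for which the family $\cA_X$ of members of $\cA$ containing $X$ is infinite, together with the facts that $n_p<\infty$ for every $p\notin X$ and that arbitrarily large sunflowers $A_m\subseteq\cA_X$ (pairwise disjoint outside $X$) exist.

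The lower bound $\ell\geq i-k$ is obtained as in Proposition~\ref{propS}: probing with an $i$-set meeting $X$ in a $(k-1)$-set and carrying one further point into each member of $A_{i-k+1}$ makes it adjacent to at least $i-k+1$ members and non-adjacent to infinitely many, so (S1) resp. (F1) forces ``adjacent to exactly $\ell+1$''. The genuinely new step is the elimination of stars, where the $\geq k$-adjacency changes the picture completely. Fix any $\ell$-subset $B\subseteq\cA_X$ and suppose, towards (S3), that some $i$-set $I$ is non-adjacent to all of $B$ and adjacent to all of $\cA\setminus B$. Adjacency to the infinitely many \emph{generic} members of $\cA_X\setminus B$ (those avoiding the finitely many points of $I$ outside $X$) forces $|I\cap X|\geq k$; but then $I$ meets \emph{every} member of $\cA_X$, and in particular every member of $B$, in at least $k$ points, contradicting non-adjacency to $B$. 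Hence (S3) fails, $\cA$ is no star, and therefore a flower. This replaces the ``kill the flower'' probe of Proposition~\ref{propS}, which fails here because meeting in $k+1$ points is now an adjacency.

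To conclude I would first pin $\ell=i-k$: applying (F2) --- literally the same condition as (S2) --- to a sunflower $A_{\ell+1}$ produces an $i$-set $I$ adjacent to exactly $A_{\ell+1}$; non-adjacency to the generic members forces $|I\cap X|=:t\leq k-1$, adjacency to each of the $\ell+1$ disjoint outside parts costs at least $k-t$ points each, and counting the $i-t$ points of $I$ outside $X$ gives $i-t\geq(\ell+1)(k-t)$, whence $\ell\leq i-k$ and so $\ell=i-k$. With $\ell=i-k$ fixed, every ``$(k-1)$ points in $X$ plus $\ell+1$ points outside $X$'' probe is a genuine $i$-set, and I use the trichotomy of (F1) four times to force regularity: \textbf{(a)} $\cA=\cA_X$ (else a member $J^\ast\not\supseteq X$ yields a contradiction, using a $k$-subset of $X$ meeting $X\setminus J^\ast$ together with $\ell$ points off $J^\ast$, which is adjacent to infinitely many members but not to $J^\ast$); \textbf{(b)} $n_p\leq 1$ for all $p\notin X$ (a doubly covered point gives a probe adjacent to at least $\ell+2$ members); \textbf{(c)} $|X|=j-1$ (if members had outside blocks of size $\geq 2$, a probe putting two of its points in one block is adjacent to only $\ell$ members); and \textbf{(d)} every $n_p=1$ (a probe wasting a point with $n_p=0$ is again adjacent to only $\ell$ members). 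Each excluded configuration violates (F1), and together they identify $\cA$ with the set of all $j$-sets through the $(j-1)$-set $X$, i.e.\ a $j$-regular star.

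The hard part is this last regularity step, above all (a) and (c). The counting of Proposition~\ref{propS} controls adjacencies only inside $\cA_X$, whereas (F1) refers to all of $\cA$; so one must genuinely discard the members outside $\cA_X$ first --- step (a), via an $i$-set adjacent to infinitely many members yet missing $J^\ast$ --- before the ``$\ell$ versus $\ell+1$'' counts of (c) and (d) become decisive. Keeping this cardinality bookkeeping honest, together with the collapse of $\geq k$-adjacency as soon as $|I\cap X|\geq k$ exploited in the ``no stars'' step, is where essentially all the content lies; everything else parallels Proposition~\ref{propS}.
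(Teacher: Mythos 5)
Your proof is correct and follows essentially the same route as the paper's: the same extraction of the maximal set $X$ with $\cA_X$ infinite, the same collapse argument ($|I\cap X|\geq k$ forces adjacency to all of $\cA_X$) to kill $\ell$-stars, the same (F2)-sunflower count giving $\ell=i-k$, and the same (F1)-violating probes to get $\cA=\cA_X$, $n_p=1$ and $|X|=j-1$. The only differences are cosmetic --- a reordering of the steps (the paper proves $\cA=\cA_X$ before pinning $\ell$, you after, which is harmless since neither step depends on the other) and your explicit verification that regular stars are indeed flowers, which the paper leaves implicit.
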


\begin{proof}
Let $\cA$ be an $\ell$-flower or $\ell$-star in $\Gamma$.  As before we assume that it consists of $j$-sets, and we drop the assumption $i\leq j$. Similarly as in the proof of Proposition~\ref{propS}, there exists a set $X\subseteq\Omega$, with $k\leq|X|\leq j-1$, such that the set $\cA_X$ of members of $\cA$ containing $X$ is infinite, and every element $p\in\Omega\setminus X$ is contained in finitely many members of $\cA$.   

Select $\ell$ members of $\cA_X$ arbitrarily. If $\cA$ is an $\ell$-star, then by (S3), there exists an $i$-set $I$ adjacent to every member of $\cA_X$ except for the selected $\ell$ members. It is easily seen that the fact that $I$ is adjacent with infinitely members of $\cA_X$ implies that $|I\cap X|\geq k$. But then $I$ is adjacent with every member of $\cA_X$, a contradiction to $\ell\geq 1$. Hence $\cA$ is an $\ell$-flower.

Suppose now that there exists $J\in\cA\setminus\cA_X$. Then we can close a $k$-subset in $X$ intersecting $J$ in less than $k$ elements. We add $i-k$ elements of $\Omega\setminus(X\cup J)$ and obtain an $i$-set adjacent to every element of $\cA_X$, but not to $J$, contradicting (F1). Hence $\cA=\cA_X$. 

Now we again define $n_p$ as the number of members of $\cA$ containing $p$, $p\in\Omega\setminus X$. Similarly as in the proof of Proposition~\ref{propS} (case $k\geq 1$), one shows $i-k\leq\ell$. Also, we can select a set $A$ of $\ell+1$ members of $\cA$ pairwise intersecting in only $X$. Then (F2) implies that there is some $i$-set $I'$ adjacent to all members of $A$, and to no member of $\cA\setminus A$. The latter implies that $|I'\cap X|\leq k-1$. Since every member of $A$ must intersect $I'$ in at least $k$ elements, the maximum value for $\ell+1$ is $i-(k-1)$; hence $\ell=i-k$. Also, since we can choose one member of $A$ completely arbitrarily in $\cA$, we deduce that $n_p=1$ as soon as $n_p\neq 0$, for every $p\in\Omega\setminus X$. By deleting from $I'$ an element outside $X$ and replacing it with some element $q$, also outside $X$, but with $n_q=0$, we obtain a contradiction and have hence shown that $n_p=1$, for all $p\in\Omega\setminus X$.

We now claim $|X|=j-1$. Indeed, assume $|X|\leq j-2$. Let $J\in\cA$. We choose an $i$-set $I''$ with exactly $k-1$ elements in $X$ and $k+1$ elements in $J$. Then  $I''$ is adjacent to at least one, but at most $i-(k-2)$ members of $\cA$, a contradiction. 

This completes the proof of the proposition.
\end{proof}

We can now show the following theorems.

\begin{theorem}
Let $0\leq k\leq i\leq j$, with $k<j$ and let $\Gamma$ be either isomorphic to $\Gamma^{\Omega}_{i,j;k}$ or isomorphic to $\Gamma^{\Omega}_{i,j;\geq k}$, $k>0$ (where $\Omega$ is an infinite set of any cardinality). Then every graph automorphism of $\Gamma$ is induced by a permutation of $\Omega$. Also, all graphs $\Gamma^{\Omega}_{i,j;k}$ and $\Gamma^{\Omega}_{i,j;\geq k}$ (with the given restrictions on the parameters) are pairwise non-isomorphic.
\end{theorem}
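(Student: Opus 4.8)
The plan is to assemble the pieces already prepared. First I would dispose of the degenerate cases exactly as set up above: the case $k=i$ is detected by the presence of a bipart of finite valence, after which deleting that bipart and joining two surviving vertices when they have a unique common neighbour produces $\Gamma^\Omega_{j;i}$, to which Proposition~\ref{prop1} applies; the graphs $\Gamma^\Omega_{1,j;0}$ are singled out by the existence of vertices of finite covalence and are replaced by their bipartite complement; and the remaining graphs with $k=0$ are replaced by their complement $\Gamma^\Omega_{i,j;\geq 1}$, which is legitimate and recognizable by Lemma~\ref{complement}. Thus it suffices to treat $\Gamma\cong\Gamma^\Omega_{i,j;k}$ and $\Gamma\cong\Gamma^\Omega_{i,j;\geq k}$ with $1\le k<i\le j$, the exact hypotheses of Propositions~\ref{propS} and~\ref{propF}.

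Next I would read off the type and the parameters from the star/flower structure. By Propositions~\ref{propS} and~\ref{propF}, the graph $\Gamma^\Omega_{i,j;k}$ is precisely the one containing $\ell$-stars (and no flowers), while $\Gamma^\Omega_{i,j;\geq k}$ is the one containing $\ell$-flowers (and no stars), so the type is an isomorphism invariant. In either case these propositions identify the $\ell$-stars (resp.\ $\ell$-flowers) as exactly the $i$-regular and $j$-regular stars, and they record that the $j$-regular stars, which live in the bipart of the $j$-sets, carry the value $\ell=i-k$, whereas the $i$-regular stars carry $\ell=j-k$. Hence from $\Gamma$ alone I recover the two families of regular stars together with the numbers $i-k$ and $j-k$; when $i\neq j$ these two values differ, so $\theta$ preserves each bipart, and when $i=j$ a possible interchange of the biparts is harmless since it is induced by the identity permutation of $\Omega$.

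The heart of the argument is then to reconstruct $\Omega$ canonically. A $j$-regular star is by definition the set of all $j$-sets containing a fixed $(j-1)$-set, so declaring two $j$-sets adjacent when they lie in a common $j$-regular star reproduces, on the bipart of the $j$-sets, the Johnson graph $\Gamma^\Omega_{j;j-1}$; since $j\ge i>k\ge 1$ forces $j\ge 2$, hence $0<j-1<j$, Proposition~\ref{prop1} recovers $\Omega$ together with the identification of each $j$-vertex with a genuine $j$-subset. Knowing $\Omega$ and the size $j$, and combining them with the invariant $i-k$, I obtain $k=j-(j-k)$ and $i=k+(i-k)$, so all of $i,j,k$ are recovered. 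Since $\Omega$ is infinite there are no nontrivial dualities to collapse distinct data, and the recovered triple $(i,j,k)$ and type, together with the distinguishing features used in the reductions (finiteness of valence, finiteness of covalence, and the property isolated in Lemma~\ref{complement}), separate the graphs pairwise; this gives the second assertion.

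Finally, for the automorphism statement I would argue that every reconstruction step above is automorphism-invariant, so an automorphism $\theta$ of $\Gamma$ induces a permutation $\sigma$ of the recovered set $\Omega$, and $\theta$ agrees with the induced automorphism $\hat\sigma$ on the bipart of the $j$-sets. Because no two vertices of $\Gamma$ share the same neighbourhood (immediate from the adjacency rule, exactly as in the finite case), each $i$-vertex is determined by its set of $j$-neighbours; since $\theta$ sends that neighbourhood to its $\sigma$-image, which is also the neighbourhood of $\hat\sigma$ at that $i$-vertex, I conclude $\theta=\hat\sigma$ on both biparts. I expect the main obstacle to lie not in this bookkeeping but in making the reconstruction genuinely canonical and uniform across all admissible parameter ranges — in particular in verifying neighbourhood-injectivity in every case and in checking that the degenerate reductions are exhaustive — whereas the truly delicate analysis, namely the recognition of the regular stars and flowers, has already been carried out in Propositions~\ref{propS} and~\ref{propF}.
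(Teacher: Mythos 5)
Your proposal is correct and follows essentially the same route as the paper's proof: the same reduction of the degenerate cases, Lemma~\ref{complement} to recognize $k=0$ and pass to the bipartite complement, Propositions~\ref{propS} and~\ref{propF} to identify the regular stars/flowers (hence the type and the invariants $i-k$ and $j-k$), reconstruction of a Johnson graph on one bipart to which Proposition~\ref{prop1} applies, and recovery of the parameters and automorphisms from there; your only deviations are cosmetic (you build $\Gamma^\Omega_{j;j-1}$ directly from common membership in a $j$-regular star instead of via the intermediate bipartite graph $\Gamma^\Omega_{j-1,j;j-1}$ as the paper does, and you spell out the neighbourhood-injectivity step extending the permutation of $\Omega$ to the other bipart, which the paper leaves implicit). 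One slip to fix: in the $k=i$ reduction, the bipart to be deleted is the one of \emph{infinite} valence (the $i$-sets), keeping the finite-valence bipart of $j$-sets; deleting the finite-valence bipart, as you wrote, would leave the $i$-sets and yield the graph in which two $i$-sets are adjacent exactly when $|I_1\cup I_2|=j$, i.e.\ $\Gamma^\Omega_{i;2i-j}$, to which Proposition~\ref{prop1} need not apply. Since your stated outcome $\Gamma^\Omega_{j;i}$ matches the paper's (correct) construction, this is a misstatement rather than a genuine gap.
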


\begin{proof}
We already discussed the cases in which there are vertices with either finite valence or finite covalence. Hence we may assume that no vertex has finite valence or finite covalence. Also, if for every finite set of one of the biparts, there exists a vertex adjacent to all members of that set, then we know by Lemma~\ref{complement} that $k=0$, and we go on with the bipartite complement. Then Propositions~\ref{propS} and~\ref{propF} imply that the only $\ell$-stars and $\ell$-flowers of $\Gamma$ are $j$-regular stars and $i$-regular stars, which are $(i-k)$-stars or -flowers and $(j-k)$-stars or -flowers, respectively. This determines $i-k$ and $j-k$ (if we went on with the bipartite complement then this already determines all parameters of the original graph $\Gamma^\Omega_{i,j;0}$). We consider one bipart $B$, say containing the vertices of the $(i-k)$-flowers or -stars, and define a new bipartite graph $\Gamma_i$ with $B$ as one bipart, and the $(i-k)$-stars or -flowers as other bipart. Adjacency is containment made symmetric. Then $\Gamma'\cong\Gamma^\Omega_{j-1,j;j-1}$ and the finite valence reveals $j$.  Also, the assertion about the automorphism group now follows from  the discussion preceding Proposition~\ref{propS}.
\end{proof}

\begin{theorem}
Let $0\leq k<i$ and let $\Gamma$ be either isomorphic to $\Gamma^{\Omega}_{i;k}$ or isomorphic to $\Gamma^{\Omega}_{i;\geq k}$, $k>0$ if $i>1$ (where $\Omega$ is an infinite set of any cardinality). Then every graph automorphism of $\Gamma$ is induced by a permutation of $\Omega$. Also, all graphs $\Gamma^{\Omega}_{i;k}$ and $\Gamma^{\Omega}_{i;\geq k}$ (with the given restrictions on the parameters) are pairwise non-isomorphic.
\end{theorem}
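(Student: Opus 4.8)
The plan is to reduce everything to the preceding (bipartite) theorem by passing to bipartite doubles, exactly as Corollaries~\ref{th1} and~\ref{th2} reduce the non-bipartite field graphs to the Main Results. The two relevant identities are $2\Gamma^{\Omega}_{i;k}\cong\Gamma^{\Omega}_{i,i;k}$ (the ordinary double, which has no diagonal edges, matching the fact that an $i$-set meets itself in $i\neq k$ points) and $\overline{2}\Gamma^{\Omega}_{i;\geq k}\cong\Gamma^{\Omega}_{i,i;\geq k}$ (the extended double, whose diagonal edges are present exactly because an $i$-set meets itself in $i\geq k$ points). I would first dispose of the boundary cases: if $i=1$ then $k=0$ and both graphs are the complete graph on $\Omega$, with automorphism group $\mathsf{Sym}(\Omega)$; and the exact case $k=0$ with $i>1$ reduces to the $\geq$-case by taking complements, since $\Gamma^{\Omega}_{i;0}$ is the complement of $\Gamma^{\Omega}_{i;\geq 1}$. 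In all remaining $\geq$-cases one has $k\geq 1$, so the target double falls under the hypotheses of the preceding theorem.

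For the statement on automorphisms I would argue as follows. Since we are told which graph $\Gamma$ is isomorphic to, its type is known and the appropriate double is at hand. Given $\phi\in\mathrm{Aut}(\Gamma)$, let $\hat\phi$ act by applying $\phi$ to each of the two copies of the vertex set. As $\phi$ preserves both adjacency and equality of vertices, $\hat\phi$ preserves the adjacency of the ordinary double (respectively the extended double), hence is an automorphism of $\Gamma^{\Omega}_{i,i;k}$ (respectively $\Gamma^{\Omega}_{i,i;\geq k}$). By the preceding theorem $\hat\phi$ is induced by a permutation $\sigma\in\mathsf{Sym}(\Omega)$, and restricting to one copy shows that $\phi$ itself is induced by $\sigma$. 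For the exact case with $k>0$ one may alternatively quote Proposition~\ref{prop1} verbatim (with $j$ renamed to $i$), which reconstructs $\Omega$ directly from $\Gamma$.

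For pairwise non-isomorphism the same-type comparisons are immediate: an isomorphism of two non-bipartite graphs of the same type induces, by acting on both copies, an isomorphism of the corresponding standard doubles, and the non-isomorphism clause of the preceding theorem then forces the parameters $i,k$ and the cardinality $|\Omega|$ to agree. The delicate point — and the main obstacle — is distinguishing an exact graph from a $\geq$-graph, because an isomorphism $\Gamma^{\Omega}_{i;k}\cong\Gamma^{\Omega'}_{i';\geq k'}$ induces isomorphisms of \emph{mismatched} doubles, each differing from a standard graph by the diagonal perfect matching, so the preceding theorem cannot be applied directly. I would resolve this by an intrinsic, isomorphism-invariant feature. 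In the exact case every infinite clique is a \emph{sunflower}, i.e.\ all pairwise intersections of its members coincide; this is exactly the key lemma in the proof of Proposition~\ref{prop1}, and it is visible in the graph through the ``infinite cliques plus transitive closure of sharing at least two vertices'' construction used there. In the $\geq$-case with $k<i-1$, by contrast, the family of all $i$-sets through a fixed $k$-set is an infinite clique whose pairwise intersections are not all equal: since $i-k\geq 2$, two members can meet in a $(k+1)$-set while a third meets each in only the $k$-set. Thus the property ``every infinite clique is a sunflower'' separates the two families. The sole overlap is the coincidental identity $\Gamma^{\Omega}_{i;i-1}=\Gamma^{\Omega}_{i;\geq i-1}$ (there being only one intersection value $\geq i-1$ for distinct $i$-sets), where the two parameter descriptions name literally the same graph and so are no counterexample. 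Reading off $i$ and $k$ from the appropriate double via the preceding theorem then completes the argument.
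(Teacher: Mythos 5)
Your core reduction coincides with the paper's proof: dispose of $i=1$ as trivial, then pass to the bipartite double ($2\Gamma^{\Omega}_{i;k}\cong\Gamma^{\Omega}_{i,i;k}$, $\overline{2}\Gamma^{\Omega}_{i;\geq k}\cong\Gamma^{\Omega}_{i,i;\geq k}$, exactly the identifications used for Corollary~\ref{th1}) and quote the preceding theorem. For the automorphism statement and for comparing two exact graphs or two $\geq$-graphs this is complete and correct. You are also right, and more careful than the paper's one-line proof, on two points: a hypothetical isomorphism between an exact graph and a $\geq$-graph only produces isomorphisms of \emph{mismatched} doubles (a standard bipartite graph versus a standard one plus or minus the diagonal matching), so the preceding theorem cannot be invoked verbatim; and $\Gamma^{\Omega}_{i;i-1}$ and $\Gamma^{\Omega}_{i;\geq i-1}$ are literally the same graph, an overlap which the statement tacitly ignores.

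However, your resolution of the cross-type comparison has a genuine gap. The property ``every infinite clique is a sunflower'' is a property of the labelled family of $i$-sets, not of the abstract graph: a graph isomorphism carries infinite cliques to infinite cliques, but nothing forces it to carry sunflowers to sunflowers. Hence from ``all infinite cliques of $\Gamma^{\Omega}_{i;k}$ are sunflowers'' and ``some infinite clique of $\Gamma^{\Omega'}_{i';\geq k'}$ is not'' you cannot conclude non-isomorphism; you need an invariant of the abstract graph. Your parenthetical appeal to the construction of Proposition~\ref{prop1} points the right way, but it must actually be carried out on \emph{both} sides. For instance: form the equivalence classes of infinite cliques under the transitive closure of ``share at least two vertices''. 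In $\Gamma^{\Omega}_{i;k}$ with $k\geq 1$, the sunflower lemma inside Proposition~\ref{prop1} shows related cliques have equal cores, so the classes biject with the $k$-sets and there are infinitely many. In $\Gamma^{\Omega'}_{i';\geq k'}$ with $1\leq k'<i'$, a pigeonhole on intersections with one fixed member shows every infinite clique has infinitely many members through a common $k'$-set, hence is related to the clique of all $i'$-sets through that $k'$-set, and any two cliques of this form are joined by a chain obtained by exchanging one element of the $k'$-set at a time; so there is exactly \emph{one} class. This count is a graph invariant and separates the two families (consistently, for $k'=i'-1$ it again gives infinitely many classes, which is the overlap case). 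The exact case $k=0$ must be split off first, e.g.\ by the non-bipartite analogue of Lemma~\ref{complement}: in $\Gamma^{\Omega}_{i;0}$ every finite set of vertices has a common neighbour, whereas $i'+1$ pairwise disjoint $i'$-sets in $\Gamma^{\Omega'}_{i';\geq k'}$, $k'\geq 1$, have none. (Alternatively one can adapt the star/flower dichotomy of Propositions~\ref{propS} and~\ref{propF} to the non-bipartite setting.) Without some such invariant reformulation, your sunflower step does not close the argument.
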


\begin{proof}
If $i=1$, this is trivial (and we also have trivial graphs). In the other cases, the graphs are not trivial, and the theorem follows from the previous one by taking the bipartite double. 
\end{proof}

Again, in the above results, we may assume graph-epimorphisms instead of automorphisms, see also Remark~\ref{surjections}.


\begin{thebibliography}{99}

\addtolength{\itemsep}{-0.3\baselineskip}

\bibitem{Abr-Mal:00} P.~Abramenko \& H.~Van Maldeghem, On opposition in spherical
buildings and twin buildings, {\em Ann. Comb.} {\bf
4} (2000), 125--137.

\bibitem{Abr-Mal:04} P.~Abramenko \& H. Van Maldeghem, Maps between buildings
that preserve a given Weyl distance, \emph{Indag. Mathem.},
\textbf{15} (2004), 305--319.

\bibitem{Bec-Qua:53} F.~S.~Beckman \& D.~A.~Quarles Jr, On isometries of Euclidean spaces, \emph{Proc. Am. Math. Soc.} \textbf{4} (1953),  810--815.

\bibitem{Blu-Hav:05} A. Blunck, H. Havlicek, On bijections that preserve complementarity of subspaces, \emph{Discrete Math.} \textbf{301} (2005) 46--56.

\bibitem{BC}
{F. Buekenhout \& P. Cameron}, {\em Projective and Affine Geometry
over Division Rings}, \textbf{in:} Buekenhout, F. (ed.), Handbook of Incidence
Geometry, Amsterdam, Elsevier, 1995.


\bibitem{Cho:49} W. L. Chow, On the geometry of algebraic homogeneous spaces, \emph{Ann. Math.} \textbf{50} (1949), 32--67.

\bibitem{Gov-Mal:02} E.~Govaert \& H. Van Maldeghem, Distance-preserving maps in
generalized polygons, Part II: Maps on points and/or lines,
\emph{Beitr. Alg. Geom.}~{\bf 43} (2002), 303--324.


\bibitem{Kas-Mal:13} A. Kasikova \& H. Van Maldeghem, Vertex opposition in spherical buildings, \emph{Des. Codes Cryptogr.} \textbf{68} (2013), 285--318.

\bibitem{Lim:10} M.-H.~Lim, Surjections on Grassmannians preserving pairs of elements with bounded distance, \emph{Lin. Alg. Appl.} \textbf{432} (2010), 1703--1707.

\bibitem{Lie-Pre-Sax:87} M.~W.~Liebeck, C.~E.~Praeger \& J.~Saxl, A classification of the maximal subgroups of the finite alternating and symmetric groups, \emph{J.~Algebra} \textbf{111} (1987), 365--383.

\bibitem{Tits}
J. Tits, \emph{Buildings of Spherical Type and Finite BN-Pairs}, Lecture Notes in Mathematics
{\bf 386}, Springer, Berlin, 1974.



\end{thebibliography}
\end{document}